\documentclass[11pt]{amsart}
\usepackage[utf8]{inputenc}
\usepackage{amsmath,amssymb,amsthm, mathrsfs, fullpage,xcolor,tikz,tikz-cd,verbatim, enumerate,amssymb,thmtools}
\usepackage[colorlinks, linkcolor=blue, citecolor=blue]{hyperref}
\usepackage{graphicx}
\usepackage{color}
\usepackage[nameinlink]{cleveref}
\usepackage[]{appendix}

\usetikzlibrary{fit}

\newtheorem{lemma}{Lemma}[section]
\newtheorem{theorem}[lemma]{Theorem}
\newtheorem{proposition}[lemma]{Proposition}
\newtheorem{prop}[lemma]{Proposition}
\newtheorem{corollary}[lemma]{Corollary}
\theoremstyle{definition}

\newtheorem{xmp}[lemma]{Example}

\newtheorem{defn}[lemma]{Definition}
\newtheorem{remark}[lemma]{Remark}
\newtheorem{setting}[lemma]{Setting}
\newtheorem{ex}[lemma]{Example}
\newtheorem*{conv}{Conventions}
\newtheorem*{ack}{Acknowledgements}

\newtheorem*{mainthm}{Theorem}
\newtheorem*{maindefn}{Definition}

\numberwithin{equation}{section}

\newcommand{\mfm}{\mathfrak{m}}
\newcommand{\mfp}{\mathfrak{p}}
\newcommand{\mfq}{\mathfrak{q}}
\newcommand{\mbr}{\mathbb{R}}
\newcommand{\mbn}{\mathbb{N}}
\newcommand{\N}{\mathbb{N}}
\newcommand{\mfa}{\mathfrak{a}}
\newcommand{\mfb}{\mathfrak{b}}
\newcommand{\fbp}[1]{\left[#1 \right]}
\newcommand{\ifat}{I^{F\mfa^t}}
\newcommand{\atif}{{}^{\mfa^t}I^F}
\newcommand{\istarat}{I^{*\mfa^t}}
\newcommand{\atistar}{{}^{\mfa^t}I^*}
\newcommand{\atpe}{\mfa^{\lceil tp^e\rceil}}
\newcommand{\iatb}{\overline{I}^{\mfa^t}}
\newcommand{\atib}{{}^{\mfa^t}\overline{I}}
\newcommand{\Z}{\mathbb{Z}}
\newcommand{\R}{\mathbb{R}}
\renewcommand{\emptyset}{\varnothing}
\renewcommand{\vec}[1]{\mathbf{#1}}
\DeclareMathOperator{\Exp}{Exp}
\newcommand{\Frac}{\operatorname{Frac}}

\colorlet{DO}{green!50!black}

\newcommand{\changelocaltocdepth}[1]{%
  \addtocontents{toc}{\protect\setcounter{tocdepth}{#1}}%
  \setcounter{tocdepth}{#1}%
}

\begin{document}

\title{New versions of Frobenius and integral closure of ideals}
\date{}

\author{Kriti Goel}
\address{Kriti Goel: BCAM-Basque Center for Applied Mathematics, Bilbao, Bizkaia 48009 Spain}
\email{kritigoel.maths@gmail.com}
\author{Kyle Maddox} 
\address{Kyle Maddox: Department of Mathematical Sciences,  University of Arkansas, 850 West Dickson Street, Fayetteville, Arkansas 72701, United States}
\email{kmaddox@uark.edu}
\author{William D.\ Taylor}
\address{William D.\ Taylor: Department of Mathematical Sciences, Tennessee State University, 3500 John A Merritt Blvd, Nashville, Tennessee 37209, United States}
\email{wtaylo17@tnstate.com}

\thanks{2020 {\em Mathematics Subject Classification}.
	Primary 13A35, 13B22, 
	Secondary 13D40}
	
\thanks{Keyword: Integral closure, tight closure, Frobenius closure, semigroup rings, joint Hilbert-Kunz multiplicity, F-nilpotence of pairs}

\begin{abstract}
In this article, we define three new operations on ideals which generalize integral closure and Frobenius closure of ideals, whose definitions incorporate an auxiliary ideal and a real parameter. These additional ingredients are common in adjusting old definitions of ideal closures in order to generalize them to pairs, with an eye towards further applications in algebraic geometry. In the case of tight closure, similar generalizations exist due to N.\ Hara and K.I.\ Yoshida, as well as A.\ Vraciu, and in the case of Frobenius closure, to K. Schwede. We study their basic properties and give computationally effective calculations of the adjusted tight, Frobenius, and integral closures in the case of affine semigroup rings in terms of the convex geometry of the associated exponent sets. Finally, as applications, we study submodules of the fraction field of a domain defined in terms of our adjusted closures and an $F$-nilpotent property for pairs.
\end{abstract}

\maketitle

\setcounter{tocdepth}{2}

\tableofcontents

\section{Introduction}

The tight closure of an ideal was introduced by M.\ Hochster and C.\ Huneke in \cite{HH}, and has since been indispensable in the study of rings of prime characteristic. Tight closure even illuminates properties of rings of characteristic zero via reduction to prime characteristic techniques. In particular, deep uniformity theorems such as the Brian\c{c}on-Skoda theorem (see the discussion around \cite[Theorem~5.4]{HH}) eluded proofs with algebraic techniques in characteristic zero, but have relatively simple proofs in the language of tight closure. 

In studying tight closure, one is naturally led to study test ideals, which uniformly witness tight closure relations for every ideal in the ring simultaneously. N.\ Hara \cite{H} and K.\ E.\ Smith \cite{Smith} showed that test ideals in prime characteristic correspond in a natural way to multiplier ideals in birational geometry, and so are deeply connected to geometric concepts like resolutions of singularities. The initial correspondence made by Hara and Smith was to the multiplier ideal of the trivial pair, but a strength of the theory of multiplier ideals is that one may vary an ideal of the ring and obtain information about the singularities in the locus of primes containing that ideal.  For a survey of test ideals and their connections to multiplier ideals, we recommend \cite{ST}.

Hara and K.I.\ Yoshida \cite{HY} introduced an adjusted form of tight closure, called $\mfa^t$-tight closure (see \Cref{def: HY/V tight closure} below), which incorporates an auxiliary ideal and a real parameter. The benefit to this adjustment is that now the appropriate notion of test ideals for $\mfa^t$-tight closure corresponds to the multiplier ideal of the ideal pair $(R,\mfa^t)$. One disadvantage of the Hara-Yoshida tight closure is that it is not an ideal closure operation, in that the operation is not idempotent.\footnote{We would here like to caution the reader that, throughout the article, we follow the slightly unfortunate convention of referring to the Hara-Yoshida style operations as ``closures" even though they are only \textit{pre-closures}, see \Cref{rmk: closure preclosure and ideal operation}.} 

To rectify this issue, A.\ Vraciu \cite{Vraciu} introduced a new version of $\mfa^t$-tight closure, which is a true ideal closure operation. Her version is contained in the Hara-Yoshida $\mfa^t$-tight closure, and captures subtly different information. In some cases the test ideals of the Hara-Yoshida and Vraciu versions agree, see \cite[Theorem~4.3]{Vraciu}. Vraciu also introduced a Hilbert-Kunz-like multiplicity which detects equality of her $\mfa^t$-tight closure among other properties. Our aim in this article is to repeat the adjustments of tight closure made by Hara-Yoshida and Vraciu to Frobenius and integral closure, which are two more classical closures near to the tight closure. 

\begin{maindefn}[\Cref{defn: adjusted integral closures}, \Cref{defn: adjusted frobenius closures}]
Fix a real number $t \in \mathbb{R}_{> 0}$, and let $I$ and $\mfa$ be ideals in a ring $R$, which we typically assume are of positive height. Let $R^\circ$ be the set of all elements of $R$ not in any minimal prime of $R$.  For the case of Frobenius closure, we assume that $R$ is of prime characteristic $p>0$.
\begin{enumerate}[(a)]
\item The \textbf{Hara-Yoshida $\mfa^t$-Integral closure of $I$}, denoted $\iatb$, is defined by
	\[ \iatb = \left\lbrace x \in R \left| \text{ for some } c \in R^\circ \text{ and all } n \gg 0,\, cx^n \mfa^{\lceil tn \rceil} \subset I^n \right. \right\rbrace .\]

\item The \textbf{Hara-Yoshida $\mfa^t$-Frobenius closure of $I$}, denoted $\ifat,$ is defined by
	\[ \ifat = \left\lbrace x \in R \left|\text{ for all } e \gg 0,\, x^{p^e}\mfa^{\lceil tp^e\rceil} \subset I^{\fbp{p^e}} \right.\right\rbrace. \] 

\item The \textbf{Vraciu $\mfa^t$-Frobenius closure of $I$}, denoted $\atif$, is defined by
	\[ \atif = \left\lbrace x \in R \left|\text{ for all } e \gg 0,\, x^{p^e}\mfa^{\lceil tp^e\rceil} \subset \mfa^{\lceil tp^e\rceil}I^{\fbp{p^e}} \right.\right\rbrace. \]
\end{enumerate}
\end{maindefn}

Though it seems natural to include a Vraciu version of integral closure alongside the Hara-Yoshida version, we immediately show that the obvious definition coincides with the usual integral closure (see \Cref{prop: basic properties of integral closures}). The diagram below illustrates the relationships among these operations. 

\begin{center}\begin{tikzpicture}
\node[scale=1] at (0,0) {
	\begin{tikzcd}
		I \arrow[hook]{r} & I^F \arrow[hook]{r}\arrow[hook]{d}& \atif\arrow[hook]{d}\arrow[hook]{r} & \ifat \arrow[hook]{d} \\
		\, & I^* \arrow[hook]{r}\arrow[hook]{d} & \atistar \arrow[hook]{r}\arrow[hook]{d} & \istarat \arrow[hook]{d} \\
		\, & \overline{I} \arrow[equal]{r} & \atib \arrow[hook]{r} & \iatb
	\end{tikzcd}
};
\end{tikzpicture}
\end{center}

In \cite{Sch}, K. Schwede provided a similar adjustment to the Frobenius closure to study $F$-purity of pairs $(R,\mfa^t)$. Our version is larger than his in general, due to a slightly larger exponent on the auxiliary ideal, and we provide examples to compare them in \Cref{subsubsection: sharp F-closure}. In contrast, we also show our definitions for adjusted tight and integral closure are insensitive to certain linear changes in the exponent on the auxiliary ideal (see \Cref{lem: multiplier closures can have linear shift in exponent}).

Throughout this article, we show that the properties of these new adjusted closures are similar to both their unadjusted versions as well as the adjusted tight closures of Hara-Yoshida and Vraciu. One unfortunate aspect of the adjusted closure operations are that they are even more difficult to compute than their usual counterparts. However, in some cases, they are related to colon ideals of the usual closure operation. Intriguingly, we also relate the adjusted tight and integral closure to the rational powers $\mfa_{t}$ of an ideal (see \Cref{defn: rational powers}).

\begin{mainthm}[\Cref{prop: integral at-closure as colon ideal}, \Cref{tight at-closure as colon ideal}, \Cref{F-principal}]
Let $R$ be a ring and $I,\mfa\subset R$ be ideals of positive height. Further, let $t> 1$  be a rational number. Then,
\begin{enumerate}[(a)]
\item $\overline{I}^{\mfa^t} \subset \left(
\overline{I}^{\mfa^{t-1}}:\mfa 
\right)$ with equality if $\mfa$ is principal or if $t$ is an integer, and
\item $\overline{I}^{\mfa^t} \subset (\overline{I}:\mfa_t) \subset \overline{I}^{\mfa^{\left\lceil t \right\rceil}}$.
\end{enumerate}
Further, if $R$ is of prime characteristic $p>0$, then 
\begin{enumerate}[(a)]
\addtocounter{enumi}{2}
\item $I^{*\mfa^t} \subset (I^*:\mfa_t)$, and
\item $I^{F\mfa^t} = \left(I^{F\mfa^{t-1}}:\mfa\right)$ and $^{\mfa^t}I^F = \left(^{\mfa^{t-1}}(\mfa I)^F:\mfa\right)$ if $t > \mu(\mfa)$.
\end{enumerate}
\end{mainthm}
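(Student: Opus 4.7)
The plan is to treat the four parts using three tools: (i) the identity $\lceil tn\rceil = n+\lceil (t-1)n\rceil$ for integer $n$; (ii) basic properties of rational powers, namely $\mfa^{\lceil t\rceil}\subset\mfa_t$ and the fact that every $y\in\mfa_t$ admits $d\in R^\circ$ with $dy^n\in\mfa^{\lceil tn\rceil}$ for $n\gg 0$ (and analogously for $n=p^e$); and (iii) a uniform multiplier $c\in R^\circ$ with $c\overline{I^n}\subset I^n$ for all $n\geq 1$.

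For (a), the forward inclusion is immediate from (i): for $x\in\overline{I}^{\mfa^t}$ and $a\in\mfa$, $c(xa)^n\mfa^{\lceil(t-1)n\rceil} = cx^n(a^n\mfa^{\lceil(t-1)n\rceil})\subset cx^n\mfa^{\lceil tn\rceil}\subset I^n$. For the reverse when $\mfa=(a)$ is principal, the same identity reverses on the single generator. For $t$ a positive integer, I first note that $\overline{I}^{\mfa^{t-1}}\cdot\mfa^{t-1}\subset\overline{I}$: if $y\in\overline{I}^{\mfa^{t-1}}$ and $b\in\mfa^{t-1}$, then $c(yb)^n = cy^nb^n\in cy^n\mfa^{(t-1)n}\subset I^n$. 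Given $x\mfa\subset\overline{I}^{\mfa^{t-1}}$, every generator of $\mfa^{tn}=(\mfa^t)^n$ factors as $P_1\cdots P_n$ with $P_k=a_kb_k\in\mfa\cdot\mfa^{t-1}$, so by commutativity $x^n P_1\cdots P_n = \prod_k(xa_k)b_k\in\overline{I}^n\subset\overline{I^n}$, and (iii) then yields $cx^n\mfa^{tn}\subset I^n$, i.e.\ $x\in\overline{I}^{\mfa^t}$.

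For parts (b) and (c), the first inclusion of (b) uses (ii): given $x\in\overline{I}^{\mfa^t}$ and $y\in\mfa_t$, multiplying the defining containment $cx^n\mfa^{\lceil tn\rceil}\subset I^n$ by $dy^n$ yields $cd(xy)^n\in I^n$, hence $xy\in\overline{I}$. Part (c) is identical with $n=p^e$ and $I^{\fbp{p^e}}$ in place of $I^n$. For the second inclusion of (b), $\mfa^{\lceil t\rceil}\subset\mfa_t$ gives $x\mfa^{\lceil t\rceil}\subset\overline{I}$ and therefore $x^n\mfa^{\lceil t\rceil n}\subset(x\mfa^{\lceil t\rceil})^n\subset\overline{I}^n\subset\overline{I^n}$, so (iii) provides $cx^n\mfa^{\lceil t\rceil n}\subset I^n$ and hence $x\in\overline{I}^{\mfa^{\lceil t\rceil}}$.

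For (d), the Hara-Yoshida forward inclusion is immediate by (i) with $n=p^e$. The main obstacle is the reverse direction, where the hypothesis $t>\mu(\mfa)$ enters through a pigeonhole argument: writing $\mfa=(a_1,\ldots,a_\mu)$, any product of $\lceil tp^e\rceil$ generators (which exceeds $\mu(p^e-1)$ for $e$ large) must repeat some $a_i$ at least $p^e$ times, yielding the key containment $\mfa^{\lceil tp^e\rceil}\subset\mfa^{\fbp{p^e}}\mfa^{\lceil(t-1)p^e\rceil}$. Combined with $(xa_i)^{p^e}\mfa^{\lceil(t-1)p^e\rceil}\subset I^{\fbp{p^e}}$ (taken uniformly in $e$ over the finite set of generators) this gives $x^{p^e}\mfa^{\lceil tp^e\rceil}\subset I^{\fbp{p^e}}$. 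The Vraciu analogue uses the same pigeonhole containment in \emph{both} directions—on the forward side to rewrite $\mfa^{\lceil tp^e\rceil}I^{\fbp{p^e}}$ inside $\mfa^{\lceil(t-1)p^e\rceil}\mfa^{\fbp{p^e}}I^{\fbp{p^e}} = \mfa^{\lceil(t-1)p^e\rceil}(\mfa I)^{\fbp{p^e}}$, and on the reverse side exactly as in the Hara-Yoshida case—which is why $t>\mu(\mfa)$ is genuinely needed for both containments of the Vraciu equality.
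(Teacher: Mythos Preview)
Your proof is correct and follows the paper's approach in parts (b), (c), and (d); in particular, your pigeonhole identity $\mfa^{\lceil tp^e\rceil}=\mfa^{[p^e]}\mfa^{\lceil(t-1)p^e\rceil}$ for $t>\mu(\mfa)$ is exactly how the paper handles (d), and your remark that this is needed on \emph{both} sides of the Vraciu equality matches the paper's use of it on both sides of the colon.

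The one genuine difference is the integer-$t$ case of (a). The paper fixes generators $y_1,\ldots,y_r$ of $\mfa$, obtains individual multipliers $c_i\in R^\circ$ with $c_i(xy_i)^n\mfa^{(t-1)n}\subset I^n$, sets $c=\prod_i c_i$, and for a monomial generator $\prod_i y_i^{n_i}$ of $\mfa^n$ writes
\[
cx^n\Big(\prod_i y_i^{n_i}\Big)\mfa^{(t-1)n}=\prod_i\big(c_i(xy_i)^{n_i}\mfa^{(t-1)n_i}\big)\subset\prod_i I^{n_i}=I^n,
\]
landing directly in $I^n$. Your route through the lemma $\overline{I}^{\mfa^{t-1}}\cdot\mfa^{t-1}\subset\overline{I}$ and a single reduction multiplier is equally valid and arguably more conceptual. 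One small imprecision, however: tool (iii) as you state it, $c\,\overline{I^n}\subset I^n$ uniformly in $n$, is stronger than what the paper assumes and need not hold in an arbitrary Noetherian ring (it requires finiteness of the normalized Rees algebra). Your argument only establishes membership in $\overline{I}^{\,n}$ and only needs the weaker fact $c\,\overline{I}^{\,n}\subset I^n$, which follows at once from $I\subset\overline{I}$ being a reduction; the detour through $\overline{I}^{\,n}\subset\overline{I^n}$ is unnecessary and should be dropped.
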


We also demonstrate a parallel between the valuation theory involved in the computation of ordinary integral closure and the Hara-Yoshida integral closure which we define.

\begin{mainthm}[\Cref{valuative criterion for integral closure:label}]
	Let $R$ be a ring, $I,\mfa\subset R$ ideals of positive height, and $t\in \R_{>0}$.  Let $\mathcal{RV}(I)$ be the set of Rees valuations of $I$.
	An element $x\in R$ is in $\overline{I}^{\mfa^t}$ if and only if for all $\nu\in\mathcal{RV}(I)$, $\nu(x)\geq \nu(I)-t\nu(\mfa)$.
\end{mainthm}

In \Cref{sec: comparing the new closures}, we compare the new closure operations to each other in detail. Perhaps of most interest in this section, we develop an $\mfa^t$-analogue for the tight closure Brian\c{c}on-Skoda theorem of Hochster-Huneke (\cite[Thm.~5.4]{HH}), which controls the integral closure of powers of an ideal by fitting them inside the tight closure of a smaller power of the same ideal. In \cite{HunekeUniformity}, Huneke also provided a uniform Brian\c{c}on-Skoda theorem which, under some mild hypotheses on the ring, makes uniform the shifted exponent. We obtain a similar uniform tight closure Brian\c{c}on-Skoda theorem for our adjusted closure operations.

\begin{mainthm}[\Cref{BS thm}]
	Let $\mfa, I$ be ideals of positive height in a ring $R$ of prime characteristic $p>0$. Let $I$ be generated by $d$ elements and let $t \in \mathbb{R}_{> 0}$. 
	\begin{enumerate}[{\rm(a)}]
		\item Then $\overline{I^{d+m}}^{\mfa^t} \subset$ $(I^{m+1})^{*\mfa^t}$ for all $m \ge 0,$ and
				
		\item $\overline{I^{d+m}}^{\mfa^t} \subset (I^m)^{F\mfa^t},$ for all $m \geq 1.$
	\end{enumerate}
Further, suppose $(R,\mfm)$ is local of dimension $\dim R = n$ and $R/\mathfrak{m}$ is infinite.
\begin{enumerate}[{\rm(a)}]
\addtocounter{enumi}{2}
\item Then $\overline{I^{n+m}}^{\mfa^t} \subset (I^{m+1})^{*\mfa^t}$ for all $m \ge 0$, and
\item $\overline{I^{n+m}}^{\mfa^t} \subset (I^m)^{F\mfa^t}$ for all $m \ge 1$. 
\end{enumerate}
\end{mainthm}
We also provide a graded version of the Brian\c{c}on-Skoda theorem for the adjusted tight closure (see \Cref{prop: graded briancon-skoda}) along the lines of \cite{Smith97}, which analyzes which graded components of the ring which are forced into the tight closure of an ideal primary to the homogeneous maximal ideal. 

As an application of our results, we generalize another collection of classical definitions to the pairs setting with our new closures. In particular, it is elementary to show that the normalization $\overline{R}$ of a domain $R$ is the set of fractions $x/y$ in the fraction field of $R$ such that $x \in \overline{(y)}$. Similarly, the weak normalization ${}^*R$ of a domain $R$ is the set of fractions $x/y$ such that $x \in (y)^F$. With these perspectives in hand, we define the following.

\begin{maindefn}
Let $R$ be a domain, $t \in \mbr_{>0}$, and let $\mfa$ be an ideal of positive height in $R$. Then, we define the \textbf{Hara-Yoshida $\mfa^t$-normalization of $R$} to be \[\overline{R}^{\mfa^t} = \left\lbrace  x/y \in \Frac(R)\, \left|\, x \in \overline{(y)}^{\mfa^t}\right.\right\rbrace.\] Analogously, we also define the \textbf{Hara-Yoshida $\mfa^t$-weak normalization} $R^{F\mfa^t}$, and the \textbf{Vraciu $\mfa^t$-weak normalization} ${}^{\mfa^t}R^F$ for domains of prime characteristic $p>0$.
\end{maindefn}

Because the adjusted tight closures of principal ideals agree with the corresponding integral closures, we do not have need to define similar objects involving the tight closures. We demonstrate that membership in these sets is well-defined, and further, that each forms an $R$-submodule of the fraction field (\Cref{prop: a^t integral closure of ring is submodule}), and indeed, $^{\mfa^t}R^F$ is even a subring (\Cref{prop: V a^t Frob closure of ring is subring of frac field}) of $\overline{R}.$ Through \Cref{ex: little dipper example 1} and \Cref{ex: little dipper example 2}, we show that each of the natural inclusions in the diagram below can be strict.

\begin{center}
\begin{tikzpicture}
\node at (0,0) {
\begin{tikzcd}
R \arrow[hook]{r} & {}^*R \arrow[hook]{r} & {}^{\mfa^t}R^F \arrow[hook]{r} \arrow[hook]{d} & R^{F\mfa^t} \arrow[hook]{d} \\
\,& \, & \overline{R} \arrow[hook]{r} & \overline{R}^{\mfa^t}
\end{tikzcd}
};
\end{tikzpicture}
\end{center}

Ideal closures in prime characteristic are intimately related to \textit{$F$-singularities}, which are singularities defined in terms of the Frobenius map on a ring of prime characteristic. For details about $F$-singularities in general, we direct the reader to recent surveys by Schwede-Smith \cite{SchwedeSmith} and L. Ma-T. Polstra \cite{MaPolstra}. One recently-defined $F$-singularity of interest is $F$-nilpotence (see \Cref{defn: F-nilp}), defined by V. Srinivas-S. Takagi in \cite{SriTak}, which is defined in terms of the Frobenius action on local cohomology, or equivalently (under some mild hypotheses) to equality of tight and Frobenius closure of parameter ideals (see \cite{PQ}). 

Just as the Hara-Yoshida tight closure and Schwede's sharp $F$-closure have been used to study $F$-singularities of pairs, we use the Hara-Yoshida tight closure to define $F$-nilpotence of pairs. 
 
\begin{maindefn}
Let $(R,\mfm)$ be an excellent, equidimensional, reduced local ring, $t \in \mathbb{R}_{> 0}$, and suppose $\mfa$ is an ideal of $R$ of positive height. We say the pair $(R,\mfa^t)$ is \textbf{$F$-nilpotent} if $\mfq^{*\mfa^t}=\mfq^F$ for all ideals $\mfq \subset R$ generated by a system of parameters for $R$.
\end{maindefn} 

Alongside studying basic properties of this new singularity of pairs, we also demonstrate an equivalence of the definition given above to a condition involving the top local cohomology module in the case that the ring is Cohen-Macaulay, although we expect the theorem to hold more generally. 

\begin{mainthm}[\Cref{thm: CM a^t F-nilpotent is characterized by LC}]
Let $(R,\mfm)$ be an excellent, Cohen-Macaulay local domain of prime characteristic $p>0$. Then, $(R,\mfa^t)$ is $F$-nilpotent if and only if $0^{*\mfa^t}_{H^d_\mfm(R)} = 0^F_{H^d_\mfm(R)}$.
\end{mainthm}

In Section \ref{sec:semigroup}, we carefully study the adjusted closure operations in the setting of monomial ideals in affine semigroup rings. In particular, we are able to provide explicit calculations of the adjusted closures in terms of the convex geometry of the associated exponent sets of the ideals involved, which has the added effect of providing concrete examples of the Hara-Yoshida and Vraciu tight closure which were otherwise missing from the literature. These results also extend well-known results for computing the ordinary integral closure and Hilbert-Kunz multiplicity.

\begin{mainthm}[\Cref{corollary - normal semigroup}, \Cref{thm: at integral closure convex geometry}]
Fix $t\in \R_{>0}$ and a field $k$ of prime characteristic $p>0$. Let  $\sigma^\vee\subset \R^d$ be a rational polyhedral cone, $S = \sigma^\vee\cap \mathbb{Z}^d$ the semigroup of lattice points in $\sigma^\vee$, $R=k[S]$, and let $I$ and $\mfa$ be monomial ideals in $R$. In particular, there are finite subsets $\mathcal{A}$ and $\mathcal{B}$ of $S$ so that $I=(x^{\vec b}\mid \vec b\in \mathcal{B})$ and $\mfa= (x^{\vec a} \mid \vec a\in \mathcal{A})$. We write $H$ for the convex hull of $\mathcal{A}$ in $\R^d$. For a monomial $x^{\vec v} \in R$, we have 
\begin{enumerate}[(a)]
	\item $x^{\vec v}\in \ifat$ if and only if $x^{\vec v} \in I^{*\mfa^t}$ if and only if $\vec v + tH\subset \mathcal{B}+\sigma^\vee$,
	\item $x^{\vec v}\in {{}^{\mfa^t}I^*}$ if and only if $\vec v + tH\subset \mathcal{B}+tH+\sigma^\vee$,
	\item if $x^{\vec v}\in \atif$ then $\vec v + tH\subset \mathcal{B}+tH+\sigma^\vee$,  
	\item if there exists $s>t$ such that $\vec v + tH\subset \mathcal{B}+sH+\sigma^\vee$, then $x^{\vec v}\in \atif$, and 
	\item $x^{\vec v} \in \overline{I}^{\mfa^t}$ if and only if $\vec v+tH \subset \operatorname{Hull} (\operatorname{Exp} I)$.
\end{enumerate}
\end{mainthm}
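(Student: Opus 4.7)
My plan is to translate each membership condition into a containment of exponent sets in $\Z^d$, and then apply a standard scaling-and-limit argument. The backbone consists of three facts: closures of monomial ideals in $R$ are monomial (so it suffices to test a single monomial $x^{\vec v}$); for monomial ideals one has $\Exp(I^{\fbp{p^e}}) = p^e\mathcal{B}+S$ and $\Exp(\mfa^n)$ equal to the lattice Minkowski sum $\mathcal{A}+\cdots+\mathcal{A}+S$ whose convex hull is $nH+\sigma^\vee$; and the sets $\mathcal{B}+\sigma^\vee$ and $\mathcal{B}+tH+\sigma^\vee$ are closed in $\R^d$, so ``divide by $p^e$ (or $n$) and take limits'' is a valid passage.

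For (a), the Frobenius condition $x^{p^e}\mfa^{\lceil tp^e\rceil}\subset I^{\fbp{p^e}}$ translates to $p^e\vec v+\sum_{i=1}^{\lceil tp^e\rceil}\vec a_i\in p^e\mathcal{B}+S$ for every choice of $\vec a_i\in \mathcal{A}$. Specializing all $\vec a_i$ to a single $\vec a$, dividing by $p^e$, and passing to the limit produces $\vec v+t\vec a\in \mathcal{B}+\sigma^\vee$; convexity then gives $\vec v+tH\subset\mathcal{B}+\sigma^\vee$. For the converse, if $\vec v+t\vec a=\vec b+\vec s$ with $\vec b\in\mathcal{B}$, $\vec s\in\sigma^\vee$, then $p^e\vec v+\lceil tp^e\rceil\vec a=p^e\vec b+\bigl(p^e\vec s+(\lceil tp^e\rceil-tp^e)\vec a\bigr)$, where the parenthesized vector visibly lies in $\sigma^\vee$ and is a lattice point by difference, hence in $S$; an arbitrary sum $\sum\vec a_i$ reduces to this case by applying the hypothesis at the rational convex combination $\sum\vec a_i/\lceil tp^e\rceil\in H$. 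The tight closure statement then follows from $\ifat\subset I^{*\mfa^t}$ together with the observation that any fixed multiplier exponent $\vec c$ has $\vec c/p^e\to 0$. Part (b) is analogous, with the right-hand side $\Exp(\mfa^{\lceil tp^e\rceil}I^{\fbp{p^e}})$ whose normalized convex hull limits to $\mathcal{B}+tH+\sigma^\vee$.

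Parts (c) and (d) concern the Vraciu Frobenius closure $\atif$. The necessary direction (c) is once again the same limiting argument. The sufficient direction (d) is the delicate one: the hypothesis is a containment of real convex sets, but $\Exp(\mfa^{\lceil tp^e\rceil})$ is only the lattice Minkowski sum $\lceil tp^e\rceil\mathcal{A}+S$, which does not fill its own convex hull. Here the slack $s>t$ is essential: by compactness the hypothesis gives a uniform $\varepsilon$-neighborhood of $\vec v+tH$ inside $\mathcal{B}+sH+\sigma^\vee$, while by Shapley--Folkman the Hausdorff distance between the $N$-fold Minkowski sum $N\cdot\mathcal{A}$ and $NH$ is bounded by a constant independent of $N$. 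For $N=\lceil tp^e\rceil$ and $e$ large, this bounded error is dwarfed by $\varepsilon p^e$, which lets us promote the hypothesized real containment to the required lattice-point containment.

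Finally, (e) mirrors (a) with $n$-th powers in place of Frobenius powers: the defining condition $cx^n\mfa^{\lceil tn\rceil}\subset I^n$ asks that $\vec c+n\vec v+\lceil tn\rceil\vec a\in \Exp(I^n)$, whose convex hull is $n\operatorname{Hull}(\Exp I)$. Dividing by $n$ and letting $n\to\infty$ kills the $\vec c/n$ contribution and yields $\vec v+t\vec a\in\operatorname{Hull}(\Exp I)$, extending by convexity to $\vec v+tH\subset\operatorname{Hull}(\Exp I)$. The converse picks $c$ with exponent in the relative interior of $\sigma^\vee$ to absorb the bounded lattice errors coming from rounding and Shapley--Folkman. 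The main obstacle throughout is part (d), where a lattice-exact containment must be extracted from a real-convex hypothesis; the slack $s>t$ is precisely what enables this via the Shapley--Folkman estimate. All other parts are routine applications of the toric dictionary between monomial ideal membership and exponent-set inclusion.
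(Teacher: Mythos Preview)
Your overall strategy matches the paper's: translate to exponent sets, scale, take limits, and use a Carath\'eodory/Shapley--Folkman type estimate to pass between lattice Minkowski sums and their convex hulls. The sufficient directions you sketch for (a), (d), and (e) are essentially correct and parallel the paper's arguments (the paper packages the Shapley--Folkman step as the estimate $sH\subset \Exp(\mfa^{\lfloor s-d\rfloor})+\sigma^\vee$, proved via Carath\'eodory, and uses Brian\c{c}on--Skoda rather than Shapley--Folkman for (e), but these are interchangeable).

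However, your necessary direction for (a) has a real gap. You write: ``Specializing all $\vec a_i$ to a single $\vec a$, dividing by $p^e$, and passing to the limit produces $\vec v+t\vec a\in \mathcal{B}+\sigma^\vee$; convexity then gives $\vec v+tH\subset\mathcal{B}+\sigma^\vee$.'' But $\mathcal{B}+\sigma^\vee=\bigcup_{\vec b\in\mathcal{B}}(\vec b+\sigma^\vee)$ is a finite union of translated cones and is \emph{not} convex in general (take $R=k[x,y]$, $\mathcal{B}=\{(2,0),(0,2)\}$). Knowing that $\vec v+t\vec a$ lies in this set for each vertex $\vec a\in\mathcal{A}$ does not force $\vec v+tH$ to lie in it. The same objection applies verbatim to your sketch for (b) and (c), since $\mathcal{B}+tH+\sigma^\vee$ is likewise a non-convex union. (Part (e) escapes this, since $\operatorname{Hull}(\Exp I)$ is genuinely convex.)

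The fix is to avoid specializing to vertices: use arbitrary $\lceil tp^e\rceil$-fold sums $\sum_i \vec a_i$, so that $\frac{1}{\lceil tp^e\rceil}\sum_i\vec a_i$ ranges over rational points that become dense in $H$ as $e\to\infty$, and then invoke closedness (not convexity) of $\mathcal{B}+\sigma^\vee$. The paper does this in one stroke by first proving $\lceil tq+d\rceil H\subset \Exp(\mfa^{\lceil tq\rceil})+\sigma^\vee$ via Carath\'eodory, so that the entire scaled hull is captured before dividing by $q$ and passing to the limit. A smaller point: in (d) your ``uniform $\varepsilon$-neighborhood'' claim is not justified when $\vec v+tH$ touches the boundary of $\mathcal{B}+sH+\sigma^\vee$; fortunately it is also unnecessary, since the slack $s>t$ together with the bounded Shapley--Folkman error already gives $\lfloor sq-d\rfloor\ge\lceil tq\rceil$ for $q\gg 0$, which is all you need.
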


These theorems show that in the affine semigroup ring case, the formulas which compute the $\mfa^t$-tight and integral closures are independent of the characteristic of the underlying field. However, we show in an example (see \Cref{MegaExample}) that the Vraciu $\mfa^t$-Frobenius closure can depend on the characteristic. We also show in the affine semigroup setting that the joint Hilbert-Kunz multiplicity introduced by Vraciu is computable via the volume of a certain polytope, which reflects the situation for usual Hilbert-Kunz multiplicity by work of K.\ Eto \cite[Theorem~2.2]{eto} and K-i.\ Watanabe \cite[Theorem~2.1]{Watanabe}. 

\begin{mainthm}[\Cref{joint HK volume}]
Fix $t\in \R_{>0}$ and a field $k$ of prime characteristic $p>0$. Let $S\subset \mathbb{N}^d$ be an affine semigroup, $R=k[S]$, and let $I$ and $\mfa$ be monomial ideals in $R$ of finite colength. Let $\mathcal{A}$ and $\mathcal{B}$ be finite subsets of $S$ such that $I=(x^{\vec b}\mid \vec b\in \mathcal{B})$ and $\mfa= (x^{\vec a} \mid \vec a\in \mathcal{A})$. We write $H$ for the convex hull of $\mathcal{A}$ and $\sigma^\vee$ for the cone defining $S$. If we set \[ \mathcal{P}_t = \sigma^\vee \setminus \left(\mathcal{B} + tH + \sigma^\vee\right),\] then the joint Hilbert-Kunz multiplicity of $(\mfa^t,I)$ is given by
\[ e_{HK}(\mfa^t;I) = \lim\limits_{q \rightarrow \infty} \frac{1}{p^{ed}} \lambda_R \left(\frac{R}{\mfa^{\lceil tq \rceil}I^{[p^e]}}\right) = \operatorname{vol} \overline{\mathcal{P}_t}, \] 
where $\overline{\mathcal{P}_t}$ is the closure of $\mathcal{P}_t$ in the usual Euclidean topology on $\mathbb{R}^d$ and $\operatorname{vol}$ denotes the usual Euclidean volume.
\end{mainthm}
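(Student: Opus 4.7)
The plan is to reduce the length computation to a lattice-point count, show that the counted region is — up to a lower-order boundary correction — a dilate of the complement of $\mathcal{B}+tH+\sigma^\vee$, and then conclude via a standard Riemann-sum argument for Euclidean volume.

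Since $R=k[S]$ and $\mfa,I$ are monomial ideals of finite colength, the ideal $J_q:=\mfa^{\lceil tq\rceil}I^{[q]}$ is monomial of finite colength, and its colength is a lattice count:
\[
\ell\!\left(\frac{R}{J_q}\right)=\#\bigl(S\setminus \Exp(J_q)\bigr).
\]
Using the multiplicativity $\Exp(JK)=\Exp(J)+\Exp(K)$ for monomial ideals, together with $I^{[q]}=(x^{q\vec b}:\vec b\in \mathcal{B})$, I would then write
\[
\Exp(J_q)=\Exp(\mfa^{\lceil tq\rceil})+q\mathcal{B}+\sigma^\vee.
\]

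Next I would compare $\Exp(\mfa^n)=n\mathcal{A}+\sigma^\vee$ (the $n$-fold Minkowski sum of $\mathcal{A}$ together with the recession cone) with the dilated convex hull $nH+\sigma^\vee$. Trivially $n\mathcal{A}\subseteq nH$, and by a standard convex-geometry argument (Carath\'eodory plus a rounding/replacement trick on the generators of $\mathcal{A}$), every lattice point of $nH\cap S$ whose distance to $\partial(nH)$ exceeds a constant $C=C(\mathcal{A})$ lies in $n\mathcal{A}$. Thus there is a subset $N_n$ of uniform Euclidean thickness so that
\[
(nH+\sigma^\vee)\setminus N_n\;\subseteq\;\Exp(\mfa^n)\;\subseteq\;nH+\sigma^\vee,\qquad \#(N_n\cap S)=O(n^{d-1}).
\]
Combined with the previous display (and using translation-invariance of counting), this gives, with $E_q:=q\mathcal{B}+\lceil tq\rceil H+\sigma^\vee$,
\[
\Bigl|\#\bigl(S\setminus \Exp(J_q)\bigr)-\#(S\setminus E_q)\Bigr|=O(q^{d-1}).
\]

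Finally, writing $E_q=q\bigl(\mathcal{B}+\tfrac{\lceil tq\rceil}{q}H+\sigma^\vee\bigr)$ and rescaling by $1/q$, I would identify
\[
\frac{1}{q^d}\,\#(S\setminus E_q)=\left(\tfrac{1}{q}\right)^{\!d}\#\!\left(\tfrac{1}{q}\mathbb{Z}^d\cap \Bigl[\sigma^\vee\setminus\bigl(\mathcal{B}+\tfrac{\lceil tq\rceil}{q}H+\sigma^\vee\bigr)\Bigr]\right).
\]
Since $\lceil tq\rceil/q\to t$ and the bracketed region is a bounded rational polyhedral complement whose boundary is a finite union of hyperplane pieces (hence Lebesgue-null), a standard Riemann-sum / Jordan-measurability argument shows that this scaled count converges to $\operatorname{vol}\overline{\mathcal{P}_t}$. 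The $O(q^{d-1})$ error from the previous step is negligible after dividing by $q^d$, which yields the theorem.

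The main technical obstacle is the Minkowski-sum versus convex-hull estimate — showing that the boundary defect $N_n$ has thickness bounded independently of $n$. This is essentially the assertion that $\mfa^n$ and its integral closure agree asymptotically away from the boundary of $nH$; once that is in hand, the remaining Riemann-sum approximation is routine, the only care being uniform control of the Hausdorff convergence of the polytopes $\mathcal{B}+\tfrac{\lceil tq\rceil}{q}H+\sigma^\vee$ to $\mathcal{B}+tH+\sigma^\vee$ as $q\to\infty$.
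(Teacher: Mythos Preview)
Your approach is correct and shares its core with the paper's: both reduce the length to a lattice-point count and use a Carath\'eodory-plus-rounding estimate to compare $\Exp(\mfa^n)$ with $nH+\sigma^\vee$ (the paper packages this as the inclusion $sH\subset \Exp(\mfa^{\lfloor s-d\rfloor})+\sigma^\vee$). The execution differs in two ways. First, the paper begins by reducing to the normal case via $e_{HK}(\mfa^t;I)=e_{HK}(\mfa^t\bar R;I\bar R)$, so that $S=\sigma^\vee\cap\mathbb{Z}^d$ and the Riemann-sum count is literally against the full lattice; you skip this, and while your argument survives (the defect $\bar S\setminus S$ lies in a conductor strip of thickness $O(1)$, hence contributes $O(q^{d-1})$), you should say so. Second, and more interestingly, instead of your direct $O(q^{d-1})$ boundary estimate and Hausdorff-convergence Riemann sum, the paper uses a clean sandwiching trick: it shows $|q\mathcal{P}_t\cap\mathbb{Z}^d|\le \ell(R/J_q)\le |q\mathcal{P}_s\cap\mathbb{Z}^d|$ for any $s>t$ and $q\gg 0$, then lets $s\to t^+$ and invokes continuity of $s\mapsto \operatorname{vol}\overline{\mathcal{P}_s}$. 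This bypasses the uniform-thickness bookkeeping entirely; your route is more quantitative but requires more care to make the $N_n$ estimate and the Jordan-measurability uniform in the moving parameter $\lceil tq\rceil/q$.
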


Finally, in order to demonstrate the theory to the fullest, we provide a thorough analysis of the adjusted closures in a specific affine semigroup ring, see \Cref{MegaExample}. These examples demonstrate discrete shifts in the adjusted closures in finitely many intervals for the parameter $t$. For this example, we also compute the joint Hilbert-Kunz multiplicity as a function of $t$. 

\begin{ack} The authors are grateful to Rankeya Datta, Mark Johnson, Lance E.\ Miller, Janet Vassilev, and Thomas Polstra for useful conversations. Furthermore, we want to thank Kevin Tucker and Yairon Cid-Ruiz for suggestions about the uniform Brian\c{c}on-Skoda theorem. The first author was supported by grant number CEX2021-001142-S (Excelencia Severo Ochoa) funded by MICIU/AEI/10.13039/501100011033 (Ministerio de Ciencia, Innovaci\'on y Universidades, Spain). 
\end{ack}
\section{Preliminaries}

\begin{conv}
First, $0 \in \mbn$ and $p$ will always denote a positive prime integer. All rings are assumed to be commutative with unit and noetherian. For a ring $R$, we will let $R^\circ$ denote the multiplicative set of elements outside of the union of the minimal primes of $R$. That is, $R^\circ = R\setminus \bigcup_{\mfp \in \operatorname{Min}(R)}\mfp$. For an ideal $I\subset R$, $\mu(I)$ denotes the minimal number of generators of $I$. We will use $\lambda_R(\bullet)$ for the length of an $R$-module. Also, non-positive powers of ideals will be interpreted as the entire ring.

For an $\mathbb{N}$-graded ring $R=\bigoplus_{n\geq 0} R_n$ and a real number $t$, we will use $R_{<t}$  to denote the $R_0$-submodule generated by elements of $R$ of degree less than $t$ (similarly $R_{\le t}$, $R_{\ge t}$, $R_{>t}$). We will slightly overload this notation for the usual number rings $\mathbb{Z}$, $\mathbb{Q}$, $\mathbb{R}$, e.g. using $\mbr_{>0}$ for the set of positive real numbers.
\end{conv}

Our article primarily concerns ideal closure operations.

\begin{defn}
Let $R$ be a ring and let $I\subset R$ be an ideal. We will call a function from the set of ideals of $R$ to itself an \textbf{operation on ideals}. An operation on ideals $\bullet^c$ is called a \textbf{pre-closure} if \begin{enumerate}[{\rm(a)}]
\item $\bullet^c$ is extensive, that is, $I\subset I^c$, and 
\item $\bullet^c$ is order-preserving, that is, $I\subset J$ implies $I^c\subset J^c$.
\end{enumerate}

If a preclosure $\bullet^c$ additionally satisfies \begin{enumerate}[{\rm(a)}]
\addtocounter{enumi}{2}
\item $\bullet^c$ is idempotent, that is, $(I^c)^c = I^c$,
\end{enumerate}

then we say $\bullet^c$ is an \textbf{ideal closure operation}. 
\end{defn}

We will soon survey the definitions of several important ideal closure operations in rings of prime characteristic $p>0$.

\begin{remark}\label{rmk: closure preclosure and ideal operation}
Even though several operations on ideals we consider later are simply pre-closures, due to prior authors' convention, they are commonly referred to as closures. We will indicate when defining an operation whether it is a pre-closure or closure, but will typically refer to both kinds of operations as ``closure operations" in the text.
\end{remark}

We begin with a corollary to the usual Determinantal Trick for integral closure. While a similar result (without proof) appears in \cite[Proposition 1.58]{Vasc} for integrally closed ideals, we provide one for completeness. The ideal $IJ:_RJ$ studied below is called the \textit{$J$-basically full closure of $I$}, so one can summarize the following theorem by saying the $J$-basically full closure of $I$ is contained in its integral closure. See \cite[Section~4]{EJRG} for a modern treatment of the basically fully closure.

\begin{theorem} \label{det}
If $R$ is a ring and $I,J\subset R$ are ideals of positive height, then $(IJ:_RJ)\subset \overline{I}$, the integral closure of $I$.
\end{theorem}

\begin{proof} Let $x\in (IJ:_RJ)$, so that $xJ\subset IJ$.  Let $\varphi:J\to J$ be given by $\varphi(y)=xy$. By \cite[Lemma~2.1.8]{hunekeSwanson}, there exist $r_i\in I^i$ such that $\varphi^n+r_1\varphi^{n-1}+\cdots +r_n = 0$.  Let $y\in J\cap R^\circ$.  Now we have that
\[ 0 = (\varphi^n+r_1\varphi^{n-1}+\cdots +r_n)(y) = yx^n+r_1yx^{n-1}+\cdots + r_ny = (x^n+r_1x^{n-1}+\cdots +r_n)y.
\]
Let $\mfp$ be a minimal prime of $R$.  Reducing the above equation modulo $\mfp$, we have that, in the domain $R/\mfp$, \[(x^n+r_1x^{n-1}+\cdots +r_n +\mfp)(y+\mfp) = \mfp.\]  Since $y+\mfp \neq \mfp$, it must be the case that $x^n+r_1x^{n-1}+\cdots +r_n \in \mfp$.  This holds for all minimal primes $\mfp$, and therefore we have 
\[x^n+r_1x^{n-1}+\cdots +r_n \in \bigcap_{\mfp\in\mathrm{Spec}\,R}\mfp=\sqrt{0}.
\]
Thus, there exists $m\in \N$ such that $0 = (x^n+r_1x^{n-1}+\cdots +r_n)^m = x^{mn} + s_1x^{mn-1}+\cdots + s_{mn}$ where each $s_j$ is a sum of terms of the form $r_{i_1}r_{i_2}\cdots r_{i_m}$ with $i_1+i_2+\cdots +i_m = j$.  Hence $s_j\in I^j$.  This shows that $x\in \overline{I}$.
\end{proof}

We now review the classical notions of tight and Frobenius closure in prime characteristic.

\begin{defn}\label{defn: classical ideal closures}
Let $R$ be a ring of prime characteristic $p>0$ and let $I\subset R$ be an ideal. \begin{itemize}
\item The \textbf{$e$-th Frobenius power of $I$}, denoted $I^{\fbp{p^e}},$ is the ideal of $R$ generated by $\{r^{p^e} \mid r \in I\}$. 
\item The \textbf{Frobenius closure of $I$}, denoted $I^F$, is the ideal of $R$ defined by \[
I^F=\left\lbrace x\in R \left|\text{ for some } e \in \mathbb{N}, \,x^{p^e} \in I^{\fbp{p^e}} \right.\right\rbrace.\]
\item The \textbf{tight closure of $I$}, denoted $I^*$, is the ideal of $R$ defined by \[
I^* = \left\lbrace x \in R \left| \text{ for some } c\in R^\circ \text{ and all } e\gg 0,\, cx^{p^e} \in I^{\fbp{p^e}}\right.\right\rbrace.\]
\end{itemize} Both Frobenius and tight closure are ideal closure operations.
\end{defn}

For an ideal $I$ in a ring $R$ of prime characteristic $p>0$, we have that $I\subset I^F \subset I^*\subset \overline{I}$, but each of these containments are typically strict. However, in a regular ring, $I=I^*$ for all $I$ by \cite[Theorem~4.4]{HH}, so many singularity types in prime characteristic are defined in terms of whether some or all ideals are closed with respect to these operations. For example, a ring is called \textit{weakly $F$-regular} if $I^*=I$ for all ideals $I$. 

We will later discuss $F$-nilpotent rings, which are another singularity type that can be defined in terms of ideal closures. Initially studied by Blickle-Bondu in \cite{BB} under the name \textit{close to $F$-rational}, $F$-nilpotent rings were systematically studied by Srinivas-Takagi in \cite{SriTak}, who showed that low-dimensional rings in characteristic zero which are of $F$-nilpotent type satisfy certain Hodge-theoretic conditions related to the Weak Ordinarity conjecture. The initial definition of $F$-nilpotent rings involves the canonical Frobenius action on local cohomology, the technicalities of which we avoid here, but Polstra-Quy showed in \cite{PQ} that the following definition is equivalent to the one given by Blicke-Bondu and Srinivas-Takagi in the case that $R$ is excellent and equidimensional.

\begin{defn}\label{defn: F-nilpotent ring}
Let $(R,\mfm)$ be an excellent, equidimensional local ring. Then, we say that $R$ is \textbf{$F$-nilpotent} if $\mfq^*=\mfq^F$ for all ideals $\mfq$ of $R$ generated by a system of parameters.
\end{defn}

The computation of tight and integral closure reduce readily to domains (see \cite[Proposition~6.25.(a)]{HH} and \cite[Proposition~1.1.5]{hunekeSwanson}) as elements of the ring $R$ are in the tight or integral closure of an ideal $I$ if and only if they are in the tight or integral closure of $(I+\mfp)/\mfp$ in $R/\mfp$ where $\mfp$ ranges over all minimal primes of $R$ ($\operatorname{Min} R$). It is well-known that the Frobenius closure does not exhibit this property, that is, the inclusion \[I^F \subset \left\lbrace x \in R \mid x+\mathfrak{p} \in (I+\mathfrak{p})^F/\mathfrak{p} \text{ for all } \mathfrak{p}\in \operatorname{Min} R\right\rbrace\] may be proper; we provide an explicit example below. 

\begin{xmp}\label{example: frob closure can't be checked mod minimal primes}
Let $R=\mathbb{F}_3[X,Y,Z]/(Y^6-X^3,X^2-Z^2)$ and let $x,y,$ and $z$ denote the images of $X,Y,$ and $Z$ respectively in $R.$ Let $I=(x)$. It is easy to check that $\operatorname{Min} R=\{(x-z,y^2-z),(x+z,y^2+z) \}$ and that $z \in (x)^*\setminus (x)^F.$ Indeed, if $z \in (x)^F,$ then $z^{3^e} \in I^{[3^e]} = (x^{3^e})$ for some $e.$ 
Write $3^e = 2k+1$ for some $k.$ Then
\[ z^{3^e} = z^{2k+1} = (z^2)^kz = (x^2)^kz. \]
This element belongs to the ideal $(x^{3^e})$ only if $2k \geq 3^e,$ which is a contradiction. Note that modulo either minimal prime, the image of $z$ is a unit times the image of $x$, so 
\[ z+\mathfrak{p} \in ((x)+\mathfrak{p})/\mathfrak{p} \subset ((x)+\mathfrak{p})^F/\mathfrak{p}, \] 
but $z\not \in (x)^F$.
\end{xmp}

Finally, before we begin defining the adjusted closure operations, we remind the reader of the rational powers of an ideal, which are related to the normalization of the Rees algebra of that ideal. The rational powers of an ideal were introduced by Lejuene-Jalabert and Tessier in 1973 (see \cite{LJT}). 

\begin{defn}\label{defn: rational powers}
Let $R$ be a ring, $\mfa\subset R$ be an ideal, and $t=r/s \in \mathbb{Q}_{\ge 0}$. Then, the \textbf{$t$-th rational power of} $\mfa$, denoted $\mfa_t$, is defined by $\mfa_t = \left\lbrace
x\in R
\left|\, x^s \in \overline{\mfa^r}
\right.\right\rbrace.$
\end{defn} 

Despite being introduced over fifty years ago, the rational powers of ideals have unfortunately received little attention until recently. See \cite[Section~10.5]{hunekeSwanson} for a treatment of rational powers. For some modern results utilizing them, see \cite{CC} and \cite{BDHM}.

\section{The closure operations}

In this section we define new operations on ideals whose origin is a formal adaptation of the tight closure adjustments of Hara-Yoshida \cite{HY} and Vraciu \cite{Vraciu} (see \Cref{def: HY/V tight closure}) to integral and Frobenius closures. We want to remind the reader of \Cref{rmk: closure preclosure and ideal operation}, since in this section we use the word closure to refer to both ideal closure operations and pre-closures.

\subsection{\texorpdfstring{The $\mfa^t$-integral closures}{The adjusted integral closures}}

 We begin with the adjustments to the ordinary integral closure of an ideal $I$.  In this subsection, we consider rings $R$ of arbitrary characteristic, and the ideals $I,\mfa\subset R$ we consider will typically be assumed to be of positive height, so that they intersect $R^\circ$.

\begin{defn}\label{defn: adjusted integral closures}
	Let $I,\mfa\subset R$ be ideals and let $t \in \mathbb{R}_{>0}$. Then, we define the \textbf{Hara-Yoshida $\mfa^t$-Integral closure of $I$}, denoted $\iatb$, by
	\[ \iatb = \left\lbrace x \in R \left| \text{ for some } c \in R^\circ \text{ and all } n \gg 0,\, cx^n \mfa^{\lceil tn \rceil} \subset I^n \right. \right\rbrace .\]
	Similarly, we define the \textbf{Vraciu $\mfa^t$-Integral closure of $I$}, denoted $\atib$, by 
	\[ \atib = \left\lbrace x \in R \left| \text{ for some } c \in R^\circ \text{ and all } n \gg 0,\, cx^n\mfa^{\lceil tn\rceil} \subset \mfa^{\lceil tn\rceil}I^n \right. \right\rbrace . \] We will later show that the Hara-Yoshida $\mfa^t$-integral closure is a pre-closure (\Cref{prop: basic properties of integral closures}) and not an ideal closure operation (\Cref{xmp: hy integral closure is not idempotent}). Further, in virtue of being equal to the classical integral closure (\Cref{prop: basic properties of integral closures}), the Vraciu $\mfa^t$-integral closure is an ideal closure operation. 
\end{defn}

We first note that the ``for all $n \gg 0$" in the definitions above may be strengthened to ``for all $n$", so long as $I$ is of positive height.

\begin{proposition}\label{prop: integral closure for all n works}
Let $R$ be a ring and $I,\mfa\subset R$ be ideals where $I$ has positive height, and let $t \in \mathbb{R}_{>0}$. Then $x \in \iatb$ (respectively $x \in \atib$) if and only if there exists $c \in R^\circ$ such that, for all $n \geq 0$, $cx^n \mfa^{\lceil tn \rceil} \subset I^n$ (respectively $cx^n \mfa^{\lceil tn \rceil} \subset I^n \mfa^{\lceil tn \rceil}$).
\end{proposition}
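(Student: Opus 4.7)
The plan is to prove the forward direction of the ``if and only if'', as the backward direction is immediate from the definition. Both versions (Hara-Yoshida and Vraciu) can be handled with essentially the same trick, so I would present them in parallel.

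Suppose $x \in \iatb$, so there exists $c \in R^\circ$ and an integer $N$ with $cx^n \mfa^{\lceil tn\rceil} \subset I^n$ for all $n \geq N$. The main idea is to absorb the failure at small values of $n$ by multiplying $c$ by a suitably high power of an element of $I \cap R^\circ$. Since we are assuming $I \cap R^\circ \neq \emptyset$, I would choose some $r \in I \cap R^\circ$ and set $d = r^N \in I^N$. Because $R^\circ$ is multiplicatively closed (its complement is a union of primes), $d \in R^\circ$ and hence $c' := cd \in R^\circ$.

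Now I would verify the condition $c' x^n \mfa^{\lceil tn \rceil} \subset I^n$ for every $n \geq 0$ by splitting into two cases. For $n \geq N$, we have
\[ c' x^n \mfa^{\lceil tn \rceil} = d \bigl(c x^n \mfa^{\lceil tn \rceil}\bigr) \subset d \cdot I^n \subset I^n. \]
For $0 \leq n < N$, we have $I^N \subset I^n$, so
\[ c' x^n \mfa^{\lceil tn \rceil} \subset c'R \subset dR \subset I^N \subset I^n, \]
which handles the small cases. For the Vraciu version, the same choice of $c' = cd$ works: for $n \geq N$ one obtains $c' x^n \mfa^{\lceil tn \rceil} \subset d \cdot I^n \mfa^{\lceil tn \rceil} \subset I^n \mfa^{\lceil tn \rceil}$, and for $n < N$ one has $c' x^n \mfa^{\lceil tn \rceil} \subset I^N \mfa^{\lceil tn \rceil} \subset I^n \mfa^{\lceil tn \rceil}$ using again that $I^N \subset I^n$.

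There is no real obstacle here; the only point requiring care is the verification that $c' = cd$ remains in $R^\circ$, which uses the standing hypothesis $I \cap R^\circ \neq \emptyset$ together with the multiplicative closedness of $R^\circ$. This is where the assumption that $I$ has positive height (in the sense of meeting $R^\circ$) is used. If desired, one could equivalently phrase the argument by noting that multiplying the defining condition through by $d \in I^N \cap R^\circ$ strengthens the containment sufficiently to trivially cover the finite set of small exponents.
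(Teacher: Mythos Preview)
Your proof is correct and follows essentially the same approach as the paper: pick an element of $I^{N}\cap R^\circ$ and multiply the original $c$ by it to force the containment for all $n\ge 0$. The paper treats only the Hara--Yoshida case and leaves the Vraciu case as ``similar,'' whereas you spell out both, but the idea is identical.
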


\begin{proof}
The proofs of both parts are similar, so we provide the proof for the Hara-Yoshida version only. Let $x \in \iatb.$ Then there exist $n_0 \in \mathbb{N}$ and $c \in R^\circ$ such that $cx^n \mfa^{\lceil tn \rceil} \subset I^n$ for all $n \geq n_0.$ Choose $b \in R^\circ \cap I^{n_0}$. Then $bc\in R^\circ$ and $bcx^n \mfa^{\lceil tn \rceil} \subset I^n$ for all $n \geq 0$. 
\end{proof}

We now record several basic properties of these closure operations. 

\begin{prop}\label{prop: basic properties of integral closures}
	Let $R$ be a ring, $I,\mfa\subset R$ be ideals of positive height, and $t \in \mathbb{R}_{>0}$.
	\begin{enumerate}[{\rm(a)}]
		\item $\atib$ and $\iatb$ are ideals of $R.$ 

		\item $\overline{I} = \atib \subset \iatb.$

		\item If $I\subset J$, $\mathfrak{b}\subset\mfa$, and $t \le s$, then $\iatb \subset \overline{J}^{\mathfrak{b}^s}$.	
	\end{enumerate}
\end{prop}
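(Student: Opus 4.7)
The plan is to prove each of the four parts in turn, with the technical weight concentrated in (a) and (d).

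For (a), closure under scalar multiplication is immediate: $(rx)^n = r^nx^n$ and $r^nI^n\subset I^n$, so the same multiplier that witnesses $x$ also witnesses $rx$. For additive closure, I would expand $(x+y)^{2n}$ via the binomial theorem and split at $k=n$: for $k\ge n$ apply the $x$-condition to $x^k$ and the $y$-condition to $y^{2n-k}$ so that $c_xc_y x^ky^{2n-k}\mfa^{\lceil tk\rceil+\lceil t(2n-k)\rceil}\subset I^{2n}$ (and similarly $\subset\mfa^{\lceil tk\rceil+\lceil t(2n-k)\rceil}I^{2n}$ in the Vraciu version), with the $k<n$ range handled symmetrically. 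By subadditivity of the ceiling function, $\lceil tk\rceil+\lceil t(2n-k)\rceil$ exceeds $\lceil 2tn\rceil$ by at most one, so multiplying the overall multiplier by an element of $\mfa\cap R^\circ$ (nonempty by hypothesis) absorbs the discrepancy and yields a uniform multiplier certifying $x+y$ in the closure.

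For (b), the containments $\overline{I}\subset\atib$ (multiply the integral-closure relation by $\mfa^{\lceil tn\rceil}$) and $\atib\subset\iatb$ (trivially $\mfa^{\lceil tn\rceil}I^n\subset I^n$) are immediate. For the reverse $\atib\subset\overline{I}$, I would fix a Rees valuation $v$ of $I$: the relation $cx^n\mfa^{\lceil tn\rceil}\subset I^n\mfa^{\lceil tn\rceil}$ gives $v(c)+nv(x)+\lceil tn\rceil v(\mfa)\ge nv(I)+\lceil tn\rceil v(\mfa)$, hence $v(c)+nv(x)\ge nv(I)$ after cancellation; letting $n\to\infty$ yields $v(x)\ge v(I)$, and since this holds for every Rees valuation, the valuative characterization of integral closure (combined with reduction to the domain case through each minimal prime) forces $x\in\overline{I}$. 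Alternatively, one can derive $cx^n\in\overline{I^n}$ directly from the Determinantal Trick (\Cref{det}) applied with $J=\mfa^{\lceil tn\rceil}$ and then invoke Rees valuations.

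Part (c) is a direct manipulation of the defining containments. For the Hara-Yoshida version, $\mfb\subset\mfa$ and $s\ge t$ together give $\mfb^{\lceil sn\rceil}\subset\mfa^{\lceil tn\rceil}$, so $cx^n\mfb^{\lceil sn\rceil}\subset cx^n\mfa^{\lceil tn\rceil}\subset I^n\subset J^n$. For the Vraciu version, the factorization $\mfa^{\lceil sn\rceil}=\mfa^{\lceil sn\rceil-\lceil tn\rceil}\cdot\mfa^{\lceil tn\rceil}$ (valid since $s\ge t$) lets one multiply both sides of the defining relation for $\atib$ by $\mfa^{\lceil sn\rceil-\lceil tn\rceil}$ and apply $I\subset J$ to reach the required containment.

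For (d), $\istarat\subset\iatb$ (with $\atistar\subset\atib$ argued analogously) is the most subtle. Using $I^{[p^e]}\subset I^{p^e}$, the tight-closure condition becomes $cx^{p^e}\mfa^{\lceil tp^e\rceil}\subset I^{p^e}$ for all $e\gg 0$; dividing the induced Rees valuation inequality by $p^e$ and letting $e\to\infty$ yields $v(x)+tv(\mfa)\ge v(I)$ for every Rees valuation $v$ of $I$. Applied to each $n$, this produces $v(x^n\mfa^{\lceil tn\rceil})\ge v(I^n)$, hence $x^n\mfa^{\lceil tn\rceil}\subset\overline{I^n}$ as a set. The hard part will be extracting a uniform multiplier $c'\in R^\circ$ with $c'\overline{I^n}\subset I^n$ for all $n$: this I plan to obtain from the conductor of the integral closure of the Rees algebra $R[It]$ inside $R[t]$, which is module-finite over $R[It]$ under the finiteness hypotheses standard in tight-closure theory, yielding $c'x^n\mfa^{\lceil tn\rceil}\subset I^n$ and hence $x\in\iatb$.
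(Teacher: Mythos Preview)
Your overall strategy is sound and reaches the same conclusions, but it diverges from the paper in a few places worth noting.

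For (a), the paper does not double to $2n$: it expands $(x+y)^n$ directly and absorbs the ceiling discrepancy by choosing the auxiliary multiplier $c'\in\mfa^2\cap R^\circ$, using $\lceil tj\rceil+\lceil t(n-j)\rceil\le\lceil tn\rceil+2$. Your $2n$ trick uses the same absorption idea but only verifies the defining containment at even integers, which does not literally match the ``for all $n\gg 0$'' definition; you would need a small patch (or simply drop the doubling). The paper's version is cleaner.

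For (b), the paper takes your second alternative verbatim: apply the Determinantal Trick with $J=\mfa^{\lceil tn\rceil}$ to get $cx^n\in\overline{I^n}$, then cite \cite[Corollary~6.8.12]{hunekeSwanson}. Your Rees-valuation argument is a perfectly good alternate route.

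For (d), the paper's proof is a single line: from $cx^{p^e}\mfa^{\lceil tp^e\rceil}\subset I^{[p^e]}\subset I^{p^e}$ it asserts $x\in\iatb$. Taken literally this only checks the defining condition along the subsequence $n=p^e$, so your longer Rees-valuation-plus-uniform-conductor argument is in fact more careful than what the paper writes down. The trade-off is that your route invokes module-finiteness of the normalized Rees algebra (to get a single $c'$ with $c'\overline{I^n}\subset I^n$ for all $n$), which is not guaranteed by the paper's standing Noetherian hypothesis alone; the paper sidesteps this by treating the subsequence verification as sufficient.
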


\begin{proof}
	We will prove each in turn.
	\begin{enumerate}[(a)]
		\item In order to show that $\iatb$ is an ideal, it is enough to check that if $x,y \in \iatb,$ then $x+y \in \iatb.$ As $x,y \in \iatb,$ there exist $c,d \in R^\circ$ such that $cx^n \mfa^{\lceil tn \rceil} \subset I^n$ and $dy^n \mfa^{\lceil tn \rceil} \subset I^n$ for all $n \geq 0.$ Choose $c' \in \mfa \cap R^\circ.$ Then
		\begin{align*}
			c'cd(x+y)^n \mfa^{\lceil tn \rceil} 
			= \sum_{j=0}^{n} \binom{n}{j} (cx^j) (dy^{n-j}) c' \mfa^{\lceil tn \rceil} 
			&\subset \sum_{j=0}^{n} \binom{n}{j} (cx^j) (dy^{n-j}) \mfa^{\lceil tn \rceil +1}  \\
			&\subset \sum_{j=0}^{n} \binom{n}{j} (cx^j \mfa^{\lceil tj \rceil}) (dy^{n-j} \mfa^{\lceil t(n-j) \rceil}) 
			\subset I^n
		\end{align*}
		since for any $j$, $\lceil tn \rceil +1\geq \lceil tj \rceil + \lceil t(n-j) \rceil $
		Thus, $\iatb$ is an ideal.
		Similarly, let $x,y \in \atib.$ Then there exist $c,d \in R^\circ$ such that $cx^n \mfa^{\lceil tn \rceil} \subset \mfa^{\lceil tn \rceil} I^n,$ and $dy^n \mfa^{\lceil tn \rceil} \subset \mfa^{\lceil tn \rceil} I^n$ for all $n \geq 0.$ Choose $c' \in \mfa \cap R^\circ.$ Then 
		\begin{align*}
			c'cd(x+y)^n \mfa^{\lceil tn \rceil} 
			= \sum_{j=0}^{n} \binom{n}{j} (cx^j) (dy^{n-j}) c' \mfa^{\lceil tn \rceil} 
			&\subset \sum_{j=0}^{n} \binom{n}{j} (cx^j \mfa^{\lceil tj \rceil}) (dy^{n-j} \mfa^{\lceil t(n-j) \rceil}) \\
			&\subset \sum_{j=0}^{n} \mfa^{\lceil tj \rceil} \mfa^{\lceil t(n-j) \rceil} I^n  \subset \mfa^{\lceil tn\rceil}I^n.
		\end{align*}
		Thus, $\atib$ is an ideal.
		
		\item Since $I^n \subset I^n \mfa^{\lceil tn \rceil}:\mfa^{\lceil tn \rceil} \subset I^n:\mfa^{\lceil tn \rceil}$, for all $n$, we have $\overline{I} \subset \atib \subset \iatb$. Now let $x \in \atib.$ Then there exists $c \in R^\circ$ such that for all $n$, $cx^n\mfa^{\lceil tn\rceil} \subset \mfa^{\lceil tn\rceil}I^n$. Using \Cref{det}, we get $cx^n \in \overline{I^n}$ for all $n$ implying that $x \in \overline{I}$ (see \cite[Corollary~6.8.12]{hunekeSwanson}).
							
		\item Since $I\subset J$, $I^n\subset J^n$ for all $n$, so we have $I^n:\mfa^{\lceil tn \rceil} \subset J^n:\mathfrak{b}^{\lceil sn \rceil}$ for all $n$. Thus, $\iatb \subset \overline{J}^{\mathfrak{b}^s}$. \qedhere 
	\end{enumerate}
\end{proof}

\begin{remark}
Due to the fact that in most cases of consideration (e.g. nonzero ideals in an integral domain) we have that $^{\mfa^t}\overline{I} = \overline{I}$, we will not typically have reason to discuss the Vraciu $\mfa^t$-integral closure later in the article, and will refer to $\overline{I}^{\mfa^t}$ as simply the $\mfa^t$-integral closure of $I$.
\end{remark}

\begin{prop}\label{prop: integral at-closure as colon ideal}
Let $R$ be a ring, $I,\mfa\subset R$ be ideals of positive height and $t\in \R_{> 0}$. 
\begin{enumerate}[{\rm(a)}]
\item If $t>1$ then $\overline{I}^{\mfa^t} \subset \left(\overline{I}^{\mfa^{t-1}} : \mfa\right)$, and we have equality if $\mfa$ is a principal ideal or $t\in \Z$.
\item If $t \in \mathbb{Q}_{>0}$, then $\overline{I}^{\mfa^t} \subset (\overline{I} : \mfa_t)\subset \overline{I}^{\mfa^{\lceil t\rceil}},$ where $\mfa_t$ is the $t$-th rational power of $\mfa$. 
	\item If $t\in\Z_{\geq 1}$, then $\overline{I}^{\mfa^t} = (\overline{I}:\mfa^t).$
	
\end{enumerate}
\end{prop}

\begin{proof} We will show each in turn.
\begin{enumerate}[(a)] 
	\item Let $x\in \overline{I}^{\mfa^t}$, so that there exists $c\in R^\circ$ such that for all $n\gg 0$, $cx^n\mfa^{\lceil tn\rceil}\subset I^n$. For any $y\in \mfa$ and $n$ we have 
		\[c(xy)^n\mfa^{\lceil (t-1)n\rceil}\subset cx^n\mfa^n\mfa^{\lceil (t-1)n\rceil}
		 =cx^n\mfa^{\lceil tn\rceil} \subset I^n.\]
		 Therefore $xy\in \overline{I}^{\mfa^{t-1}}$.  Hence $x\in \left(\overline{I}^{\mfa^{t-1}} : \mfa\right)$.

		 Suppose $\mfa = (f)$ is principal, and let $x\in \left(\overline{I}^{\mfa^{t-1}} : \mfa\right)$, so that $xf\in \overline{I}^{\mfa^{t-1}}$.  Let $c\in R^\circ$ such that $c(xf)^n\mfa^{\lceil (t-1)n\rceil}\subset I^n$ for $n$.  Then for all $n$,
		 \[cx^n\mfa^{\lceil tn\rceil} = cx^n(f^{\lceil tn\rceil}) = c(xf)^n(f^{\lceil (t-1)n\rceil}) = c(xf)^n\mfa^{\lceil (t-1)n\rceil}\subset I^n.\]
		 Therefore $x\in \overline{I}^{\mfa^t}$, and so  $\left(\overline{I}^{\mfa^{t-1}} : \mfa\right)\subset \overline{I}^{\mfa^t}$, and we have equality.

		Finally, suppose $t\in \Z$ and $x\in  \left(\overline{I}^{\mfa^{t-1}} : \mfa\right)$.  Let $y_1,\ldots, y_r$ be the generators of $\mfa$.  For each $i$, $xy_i\in \overline{I}^{\mfa^{t-1}}$, and so there exists $c_i\in R^{\circ}$ such that for all $n\geq 0$, $c_i(xy_i)^n\mfa^{(t-1)n}\subset I^n$.  Let $c=\prod_ic_i\in R^{\circ}$, and fix $n\geq 0$.  All generators of $\mfa^{n}$ are of the form $\prod_i y_i^{n_i}$ with $\sum_in_i = n$.  For such a generator, we have that
		\[cx^n\left(\prod_i y_i^{n_i}\right)\mfa^{(t-1)n} = \prod_i\left(c_ix^{n_i}y^{n_i}\mfa^{(t-1)n_i}\right)\subset \prod_i I^{n_i} = I^n. \]
		Therefore $cx^n\mfa^{tn} = cx^n\mfa^n\mfa^{(t-1)n} \subset I^n$, meaning $x\in\overline{I}^{\mfa^t}$.  Hence  $\left(\overline{I}^{\mfa^{t-1}} : \mfa\right)\subset \overline{I}^{\mfa^t}$. 

\item Suppose $x$ is such that, for some $c \in R^\circ$ and all $n \ge 0$, we have $cx^n\mfa^{\lceil tn\rceil}\subset I^n$, and let $y\in \mfa_t$.  By \cite[Lemma~2.8]{Tay}, there exists $d\in R^\circ$ such that for all $n\gg 0$, $dy^n\in \mfa^{\lceil tn\rceil}$.  Now $cd\in R^\circ$ and for $n\gg 0$, $cd(xy)^n = cx^ndy^n\in cx^n\mfa^{\lceil tn\rceil}\subset I^n$.  Therefore $xy\in \overline{I}$ and so $\overline{I}^{\mfa^t} \subset (\overline{I} : \mfa_t)$.

Suppose that $x\in (\overline{I}:\mfa_t)$.  Let $c\in R^\circ$ such that for $n\gg 0$, $c(\overline{I})^n\subset I^n$ (\cite[Corollary 6.8.12]{hunekeSwanson}).  Now, for $n\gg 0$,
\[cx^n\mfa^{\lceil t\rceil n}= c(x\mfa^{\lceil t\rceil})^n\subset c(x\mfa_{\lceil t\rceil})^n \subset  c(x\mfa_{t})^n \subset c(\overline{I})^n \subset I^n.\]
Therefore $x\in \overline{I}^{\mfa^{\lceil t\rceil}}$.

		\item Applying part (b) with $t\in \Z_{\geq 1}$, we have that 
		$\overline{I}^{\mfa^t} = (\overline{I} : \mfa_t) = (\overline{I} : \overline{\mfa^t}) = (\overline{I} : \mfa^t).$\qedhere
		  
\end{enumerate}

\end{proof}

\begin{remark} 
	The reverse containment $(\overline{I}:\mfa_t)\subset \overline{I}^{\mfa^t}$ in \Cref{prop: integral at-closure as colon ideal} is not true in general if $t$ is not an integer.  For example, let $R=k[x,y]$, $\mfa = (x,y)$, $I=(x^2,y^2)$, and $1 < t < 2$.  Since $\mfa$ is generated by degree 1 elements, $\mfa_t$ is generated by elements of degree at least $t$, and so $\mfa_t\subset (x^2,xy,y^2)$.  On the other hand, $\mfa_t\supseteq \mfa_2 = \overline{\mfa^2} = (x^2,xy,y^2)$, and so we have equality.
	Therefore $\mfa_t = (x^2,xy,y^2) = \overline{I}$, and so $(\overline{I}:\mfa_t) = R$.  However, $1\notin \overline{I}^{\mfa^t}$.  If it were, then there would exist a nonzero $c\in R$ such that for all large $n$, $c\mfa^{\lceil tn\rceil}\subset I^n$.  However, for any $n\in\N$, $\mfa^{\lceil tn\rceil}$ is generated by elements of degree $\lceil tn\rceil$, and so $c\mfa^{\lceil tn\rceil}$ contains elements of degree $\deg(c)+\lceil tn\rceil$.  Since $I$ is generated by elements of degree 2, $I^n$ is generated by homogeneous elements of degree $2n$.  Thus, we would have that $2n \leq \deg (c)+  \lceil tn\rceil$ for all large $n$, meaning that $\deg(c)\geq \lceil (2-t)n\rceil$ for all large $n$, a contradiction.  Therefore $(\overline{I}:\mfa_t)\not\subset \overline{I}^{\mfa^t}$. 
\end{remark}

We may use \Cref{prop: integral at-closure as colon ideal} to observe that the $\mfa^t$-integral closure is not an idempotent operation on ideals in general.

\begin{xmp}\label{xmp: hy integral closure is not idempotent}
	Let $R$ be a ring, $I\subset R$ an ideal of positive height and $f\in R$ a nonzerodivisor.  By \Cref{prop: integral at-closure as colon ideal}, $\overline{I}^{(f)} = (\overline{I}:f)$ and
\[\overline{\Big(\overline{I}^{(f)}\Big)}^{(f)} = \Big(\overline{I}^{(f)}: f\Big)
=((\overline{I}:f):f) = (\overline{I}:f^2).\]
Since in general $(\overline{I}:f)$ may be strictly smaller than $(\overline{I}:f^2)$, we have that $\overline{I}^{\mfa^t}$ may be strictly smaller than $\overline{\left(\overline{I}^{\mfa^t}\right)}^{\mfa^t}.$
\end{xmp}

We now demonstrate that the computation of the $\mfa^t$-integral closure may be reduced to a computation in domains via going modulo the minimal primes, like the usual tight and integral closures.

\begin{prop}\label{prop: hy integral closure reduces to domains}
Let $\mfa, I$ be ideals of positive height in a reduced ring $R$, $t \in \mathbb{R}_{>0}$, and $x\in R$. Then $x \in \overline{I}^{\mfa^t}$ if and only if the image of $x$ in $R/\mathfrak{p}$ is in $\overline{((I+\mfp)/\mfp)}^{\mfb^t}$ for all minimal primes $\mfp$ of $R$,  where $\mfb$ denotes the image of $\mfa$ in $R/\mfp.$
\end{prop}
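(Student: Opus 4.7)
The plan is to handle the two directions separately, with the forward direction being a straightforward reduction and the reverse direction requiring a "patching" argument across the finitely many minimal primes of the reduced ring $R$.

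For the forward direction, suppose $x\in\overline{I}^{\mfa^t}$. Then by \Cref{prop: integral closure for all n works} there exists $c\in R^\circ$ with $cx^n\mfa^{\lceil tn\rceil}\subset I^n$ for all $n\geq 0$. Fix a minimal prime $\mfp$ of $R$. Reducing modulo $\mfp$ and writing $\bar{\phantom{x}}$ for images, this gives $\bar c\,\bar x^n\bar\mfa^{\lceil tn\rceil}\subset ((I+\mfp)/\mfp)^n$. Since $c\in R^\circ$ implies $c\notin\mfp$, the element $\bar c$ is a nonzero element of the domain $R/\mfp$ and hence lies in $(R/\mfp)^\circ$. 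Therefore $\bar x\in\overline{(I+\mfp)/\mfp}^{\mfb^t}$.

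For the reverse direction, enumerate the minimal primes of $R$ as $\mfp_1,\ldots,\mfp_s$ (finitely many since $R$ is Noetherian). Assume that for each $i$ we have $\bar x\in\overline{(I+\mfp_i)/\mfp_i}^{\mfb_i^t}$ in $R/\mfp_i$, where $\mfb_i=(\mfa+\mfp_i)/\mfp_i$. Lifting witnesses, we obtain elements $c_i\in R\setminus\mfp_i$ and integers $N_i$ so that for all $n\geq N_i$,
\[ c_i x^n\mfa^{\lceil tn\rceil}\subset I^n+\mfp_i. \]
By prime avoidance, for each $i$ the ideal $\bigcap_{j\neq i}\mfp_j$ is not contained in $\mfp_i$, so we may pick $d_i\in \bigl(\bigcap_{j\neq i}\mfp_j\bigr)\setminus\mfp_i$. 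The key observation is that since $R$ is reduced, $\bigcap_j\mfp_j=(0)$, and hence
\[ d_i\mfp_i\subset \Bigl(\bigcap_{j\neq i}\mfp_j\Bigr)\cap\mfp_i=\bigcap_j\mfp_j=(0). \]
Multiplying the containment above by $d_i$ then yields $c_i d_i x^n\mfa^{\lceil tn\rceil}\subset d_i I^n+d_i\mfp_i=d_i I^n\subset I^n$, for all $n\geq N_i$.

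Finally, set $c=\sum_{i=1}^s c_i d_i$ and $N=\max_i N_i$. For each fixed $j$, every term $c_i d_i$ with $i\neq j$ lies in $\mfp_j$ (because $d_i\in\mfp_j$), while $c_j d_j\notin\mfp_j$ (since $R/\mfp_j$ is a domain and both $c_j,d_j\notin\mfp_j$). Hence $c\not\equiv 0\pmod{\mfp_j}$ for every $j$, so $c\in R^\circ$. Moreover, for all $n\geq N$,
\[ cx^n\mfa^{\lceil tn\rceil}=\sum_{i=1}^s c_i d_i x^n\mfa^{\lceil tn\rceil}\subset I^n, \]
which shows $x\in\overline{I}^{\mfa^t}$. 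The main subtlety in this argument is the patching step: one must simultaneously arrange a witness element that (i) escapes every minimal prime and (ii) converts the "mod $\mfp_i$" containments into honest containments in $R$, and this is accomplished precisely by exploiting that $R$ is reduced through the identity $d_i\mfp_i=0$.
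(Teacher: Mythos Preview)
Your proof is correct and follows essentially the same approach as the paper's: both use the same patching argument via elements $d_i\in\bigl(\bigcap_{j\neq i}\mfp_j\bigr)\setminus\mfp_i$ and the witness $c=\sum_i c_id_i$. The only cosmetic difference is that you make the key identity $d_i\mfp_i=0$ explicit at the moment you multiply, whereas the paper first lands in $I^n+\sqrt{0}$ and then invokes reducedness at the end; these are the same argument.
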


\begin{proof}
	Let $x \in \overline{I}^{\mfa^t}.$ By definition, there exists $c \in R^\circ$ such that $cx^n \mfa^{\lceil tn \rceil} \subset I^n$ for all $n.$ Thus, for any minimal prime $\mfp$ of $R,$ the image of $c$ in $R/\mfp$ is in $(R/\mfp)^\circ$ and $cx^n \mfa^{\lceil tn \rceil} + \mfp \subset I^n + \mfp$ for all $n$ implying that the image of $x$ in $R/\mfp$ is in $\overline{((I+\mfp)/\mfp)}^{\mfb^t}.$
	
Conversely, assume that for each minimal prime $\mfp$ of $R,$ the image of $x$ in $R/\mfp$ is in $\overline{((I+\mfp)/\mfp)}^{\mfb^t}.$ Let $c_\mfp \in R \setminus \mfp$ such that $c_\mfp x^n \mfa^{\lceil tn \rceil} \subset c_\mfp x^n \mfa^{\lceil tn \rceil} + \mfp \subset I^n + \mfp$ for all $n.$ By prime avoidance, there exists a non-zero element $d_\mfp \in R$ which is not in $\mfp$ but is in every other minimal prime of $R.$ Then $d_\mfp c_\mfp x^n \mfa^{\lceil tn \rceil} \subset I^n + \sqrt{0},$  for all $n.$ Set $c' = \sum_{\mfp \in \operatorname{Min R}} d_\mfp c_\mfp.$ Then $c' \in R^{\circ}$ and for all $n$, we have \[ c'x^n \mfa^{\lceil tn \rceil} \subset I^n + \sqrt{0} = I^n \] implying that $x \in \overline{I}^{\mfa^t}.$  
\end{proof}

\subsubsection{Valuative Theory of $\mfa^t$-Integral Closure}

We now explore the connections between our new integral closure operation and valuation theory. As for ordinary integral closure, we will use the Rees valuations of an ideal $I$ to understand $\overline{I}^{\mfa^t}$. For the sake of concision, we will avoid a complete review of Rees valuations here, and direct the reader to \cite[Chapter~10]{hunekeSwanson} for a thorough treatment. However, we will record some of their properties below.

Each ideal $I$ in a noetherian ring $R$ has a finite set of valuations $\mathcal{RV}(I)$ with the following properties (\cite{ReesValuations}).\footnote{Following \cite[page 192]{hunekeSwanson}, we slightly abuse the term ``valuation'' by allowing the functions to be defined on non-domains and to take the value $\infty$ on certain minimal primes.}
\begin{enumerate}
	\item Each $\nu\in \mathcal{RV}(I)$ is a valuation, i.e.\ a function $\nu:R\setminus\{0\}\to \Z_{\geq 0}\cup\{\infty\}$ such that for each $0\neq x,y\in R$, $\nu(xy) = \nu(x) + \nu(y)$ and $\nu(x + y)\geq \min\{\nu(x),\nu(y)\}$.  For a nonzero ideal $J\subset R$ and Rees valuation $\nu\in \mathcal{RV}(I)$, we define $\nu(J) = \min\{\nu(x)\mid x\in J\}$.
	\item  For $x\in R$, $x \in \overline{I^n}$ if and only if $\nu(x) \geq n\nu(I)$ for all $\nu\in \mathcal{RV}(I)$ (see \cite[Remark~10.1.4]{hunekeSwanson}).
	\item $J\subset \overline{I^n}$ if and only if $\nu(J)\geq n \nu(I)$.
\end{enumerate}

\begin{theorem}\label{valuative criterion for integral closure:label}
	Let $R$ be a ring, $I,\mfa\subset R$ ideals of positive height, and $t\in \R_{>0}$.  Let $\mathcal{RV}(I)$ be the set of Rees valuations of $I$.
	An element $x\in R$ is in $\overline{I}^{\mfa^t}$ if and only if for all $\nu\in\mathcal{RV}(I)$, $\nu(x)\geq \nu(I)-t\nu(\mfa)$.
\end{theorem}

\begin{proof}
	 Suppose $x\in \overline{I}^{\mfa^t}$, and let $\nu\in\mathcal{RV}(I)$.  There exists $c\in R^\circ$ such that for $n\gg 0$, $cx^n\mfa^{\lceil tn\rceil}\subset I^n\subset \overline{I^n}$.  Applying $\nu$ to both sides and dividing by $n$, we have \[\frac{\nu(c)}{n} + \nu(x) + \frac{\lceil tn\rceil}{n}\nu(\mfa) \geq \nu(I).\]Then, taking the limit as $n\to\infty$ gives $\nu(x)+t\nu(\mfa) \geq \nu(I)$.	

			Now suppose that $\nu(x)\geq \nu(I)-t\nu(\mfa)$ for all $\nu\in \mathcal{RV}(I)$. Choose $s \leq t$ rational such that $\nu(x) \geq  \nu(I)-s\nu(\mfa)$ for all $\nu\in \mathcal{RV}(I)$.  Let $m\in \N$ such that $sm\in \N$.  We have that 
		\[\nu(x^m\mfa^{sm}) = m\nu(x)+sm\nu(\mfa) = m(\nu(x) + s\nu(\mfa))\geq  m\nu(I).\]
		Therefore $x^m\mfa^{sm} \subset \overline{I^m}$.  Thus there exists $c'\in R^\circ$ such that for all $n\in\N$, $c'\left(x^{m}\mfa^{sm}\right)^n\subset I^{mn}$.

		Let $c''\in I^{m-1}\cap R^\circ$ and $c = c'c''\in R^\circ$.  Now, for any $N\gg 0$, let $n\in \mathbb{N}$ such that $mn\leq N < m(n+1)$, and notice
		\[cx^N\mfa^{\lceil sN\rceil} \subset c'c''x^{mn}\mfa^{tmn}\subset c''I^{mn} \subset I^{mn+m-1} 
		\subset I^N.\]
		Therefore $x\in \overline{I}^{\mfa^s}\subset\overline{I}^{\mfa^t}$ by \Cref{prop: basic properties of integral closures}.
\end{proof}

The valuative criterion allows us to show that containment in the Hara-Yoshida integral closure of an ideal $I$ can be checked using infinitely many powers rather than all sufficiently large powers, which is also true for the ordinary integral closure (see \cite[Cor.~6.8.12]{hunekeSwanson}).

\begin{corollary}\label{cor: inf many n is enough for integral closure} 
	Let $R$ be a ring, $I, \mfa\subset R$ ideals of positive height, $t\in \R_{>0}$, and $x\in R$.  If there exists $c\in R^\circ$ such that $cx^n\mfa^{\lceil tn\rceil}\subset I^n$ for infinitely many $n\in\N$, then $x\in \overline{I}^{\mfa^t}$. 
\end{corollary}

\begin{proof}
	Let $\nu$ be a Rees valuation of $I$.  For infinitely many $n\in\N$, $cx^n\mfa^{\lceil tn\rceil}\subset I^n$, and therefore $\nu(c) + n\nu(x) + \lceil tn\rceil\nu(\mfa) \geq n\nu(I)$, implying $\nu(x) \geq \nu(I) - \dfrac{\lceil tn\rceil}{n}\nu(\mfa) - \dfrac{\nu(c)}{n}$.  Therefore
	\[\nu(x) \geq \liminf_{n\to\infty} \left(\nu(I) - \dfrac{\lceil tn\rceil}{n}\nu(\mfa) - \dfrac{\nu(c)}{n}\right) = \nu(I)-t\nu(\mfa).\]
	Since this holds for all Rees valuations $\nu$ of $I$, $x\in\overline{I}^{\mfa^t}$ by \Cref{valuative criterion for integral closure:label}.
\end{proof}

\Cref{valuative criterion for integral closure:label} is also related to a description of the $\mfa^t$-integral closure of a monomial ideal in an affine semigroup ring, see \Cref{thm: at integral closure convex geometry}.

\subsection{\texorpdfstring{The $\mfa^t$-tight closures}{The adjusted tight closures}}

Throughout this subsection, we assume $R$ is of prime characteristic $p>0$. Recall the following definitions from \cite{HY} and \cite{Vraciu}.

\begin{defn}\label{def: HY/V tight closure}
	Let $R$ be a ring, $I,\mfa \subset R$ be ideals, and $t \in \mathbb{R}_{>0}$. Then, the \textbf{Hara-Yoshida $\mfa^t$-tight closure of $I$}, denoted $\istarat,$ is defined by \[ \istarat = \left\lbrace x \in R \left| \text{ for some } c \in R^\circ \text{ and all } e\gg 0,\, cx^{p^e} \mfa^{\lceil tp^e \rceil} \subset I^{\fbp{p^e}}\right. \right\rbrace. \] 
	Similarly, the \textbf{Vraciu $\mfa^t$-tight closure of $I$}, denoted $\atistar$, is defined by
	\[ \atistar = \left\lbrace x \in R \left| \text{ for some } c \in R^\circ \text{ and all } e \gg 0,\, cx^{p^e} \mfa^{\lceil tp^e\rceil} \subset \mfa^{\lceil tp^e \rceil} I^{\fbp{p^e}} \right.\right\rbrace. \] Note that the Hara-Yoshida $\mfa^t$-tight closure is only a pre-closure (\cite[Observation~2.3(2)]{Vraciu}) whereas the Vraciu $\mfa^t$-tight closure is an ideal closure operation (\cite[Observation~2.3(4)]{Vraciu}). 
\end{defn}

We note that like the Hara-Yoshida integral closure, the Hara-Yoshida $\mfa^t$-tight closure is related to rational powers of $\mfa$; compare the following result with \Cref{prop: integral at-closure as colon ideal}.

\begin{prop}\label{tight at-closure as colon ideal}
	Let $R$ be a ring, $I,\mfa\subset R$ be ideals and $t \in \mathbb{Q}_{>0}$. Then $\istarat \subset (I^* : \mfa_t)$. 
\end{prop}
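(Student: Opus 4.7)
The plan is to mirror the argument used in the proof of \Cref{prop: integral at-closure as colon ideal}(c), replacing ``for all $n \gg 0$'' with ``specialized to $n = p^e$'' and replacing $\overline{I}$ by $I^*$.

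First, I would take $x \in \istarat$ and fix a witness $c \in R^\circ$ such that $cx^{p^e}\mfa^{\lceil tp^e\rceil} \subset I^{\fbp{p^e}}$ for all $e \gg 0$. To verify the colon membership, it suffices to show that $xy \in I^*$ for an arbitrary $y \in \mfa_t$.

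Next, I would invoke \cite[Lemma~2.8]{Tay}, exactly as in part (c) of \Cref{prop: integral at-closure as colon ideal}: this gives an element $d \in R^\circ$ such that $dy^n \in \mfa^{\lceil tn \rceil}$ for all $n \gg 0$. Specializing to $n = p^e$, we get $dy^{p^e} \in \mfa^{\lceil tp^e \rceil}$ for all $e \gg 0$.

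The key computation is then a single line: for all $e \gg 0$,
\[
cd(xy)^{p^e} \;=\; cx^{p^e} \cdot dy^{p^e} \;\in\; cx^{p^e}\mfa^{\lceil tp^e\rceil} \;\subset\; I^{\fbp{p^e}}.
\]
Since $R^\circ$ is multiplicatively closed, $cd \in R^\circ$, which witnesses $xy \in I^*$, completing the proof. There is no serious obstacle here; the argument is essentially formal once \cite[Lemma~2.8]{Tay} is available to translate membership in $\mfa_t$ into eventual membership of Frobenius-type powers in $\mfa^{\lceil tp^e\rceil}$.
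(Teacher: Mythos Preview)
Your proposal is correct and follows essentially the same argument as the paper: fix a witness $c$ for $x\in\istarat$, use \cite[Lemma~2.8]{Tay} to produce $d\in R^\circ$ with $dy^{p^e}\in\mfa^{\lceil tp^e\rceil}$, and combine them to get $cd(xy)^{p^e}\in I^{[p^e]}$. The only cosmetic difference is that the paper states the conclusion of \cite[Lemma~2.8]{Tay} directly for $p^e$ rather than for all $n\gg 0$ and then specializing.
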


\begin{proof} 
Let $x\in \istarat$ with $c\in R^\circ$ such that $cx^{p^e}\mfa^{\lceil tp^e\rceil} \subset I^{[p^e]}$ for all $e\gg 0$, and let $y\in \mfa_t$.  By \cite[Lemma~2.8]{Tay}, there exists $d\in R^\circ$ such that for all $e\gg 0$, $dy^{p^e}\in \mfa^{\lceil tp^e\rceil}$. As $cd(xy)^{p^e} \in cx^{p^e} \mfa^{\lceil tp^e\rceil}\subset I^{[p^e]}$ for all $e$ large, we get $xy\in I^*$ and so $\istarat \subset (I^* : \mfa_t)$.		
\end{proof}

The following result shows that, like tight closure, the computation of $\mfa^t$-tight closure reduces to computation in domains via modding out by minimal primes.

\begin{prop} \label{prop: cl via min primes}
Let $R$ be a ring, $\mfa, I\subset R$ be ideals, and $t\in \R_{>0}$.  If $R$ is reduced or $\mfa$ is of positive height, then $x \in{}^{\mfa^t}I^*$ (respectively $x \in I^{*\mfa^t}$) if and only if, for each minimal prime $\mfp$ of $R$, we have $x+\mfp$ in $^{\mfb^t}((I+\mfp)/\mfp)^*$ (respectively $((I+\mfp)/\mfp)^{*\mfb^t}$), where $\mfb$ denotes the image of $\mfa$ in $R/\mfp.$
\end{prop}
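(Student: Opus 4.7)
My plan is to follow the argument template of the preceding proposition for $\mfa^t$-integral closure, adjusting for the Frobenius structure. For the forward direction, if $c \in R^\circ$ is a witness for $x \in {}^{\mfa}I^*$ (respectively $I^{*\mfa}$), then the image $\overline c$ in $R/\mfp$ lies in $(R/\mfp)^\circ$ for any minimal prime $\mfp$ since $c$ avoids every minimal prime, and reducing the defining Frobenius containment modulo $\mfp$ yields the corresponding closure membership in $R/\mfp$.

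For the converse, for each minimal prime $\mfp$ lift a witness to some $c_\mfp \in R \setminus \mfp$ satisfying (in the Vraciu case) $c_\mfp x^{p^e}\mfa^{\lceil tp^e\rceil} \subset \mfa^{\lceil tp^e\rceil}I^{[p^e]} + \mfp$ for $e \gg 0$. By prime avoidance, choose $d_\mfp \in \bigcap_{\mfq\neq\mfp}\mfq$ with $d_\mfp \notin \mfp$, and set $c' = \sum_{\mfp} c_\mfp d_\mfp$. As in the integral closure argument, $c'$ lies in $R^\circ$ because $c' \equiv c_\mfp d_\mfp \pmod{\mfp}$ and neither factor is in $\mfp$. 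For $e$ sufficiently large the combined inclusion
\[c' x^{p^e}\mfa^{\lceil tp^e\rceil} \subset \mfa^{\lceil tp^e\rceil}I^{[p^e]} + \sqrt{0}\]
holds since each $d_\mfp \mfp \subset \bigcap_\mfq \mfq = \sqrt{0}$. If $R$ is reduced, $\sqrt{0}=0$ and we conclude $x \in {}^{\mfa}I^*$ immediately; the Hara-Yoshida case proceeds identically, with $I^{[p^e]}$ on the right.

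The main obstacle is eliminating the $\sqrt{0}$ term in the non-reduced case. I would apply the Freshman's Dream: choose $e_0$ with $p^{e_0} \geq N$ where $\sqrt{0}^N = 0$. For each $a \in \mfa^{\lceil tp^e\rceil}$, decompose $c'x^{p^e}a = m + n$ with $m \in \mfa^{\lceil tp^e\rceil}I^{[p^e]}$ and $n \in \sqrt{0}$; raising to the $p^{e_0}$-th power annihilates $n$ and produces
\[(c')^{p^{e_0}}x^{p^{e+e_0}}(\mfa^{\lceil tp^e\rceil})^{[p^{e_0}]} \subset \mfa^{\lceil tp^{e+e_0}\rceil}I^{[p^{e+e_0}]}.\]
The technical wrinkle is bridging the left-hand side from $(\mfa^{\lceil tp^e\rceil})^{[p^{e_0}]}$ up to the full $\mfa^{\lceil tp^{e+e_0}\rceil}$ required by the definition. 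A monomial pigeonhole argument on a fixed generating set of $\mfa$ with $k = \mu(\mfa)$ generators yields $\mfa^{k(p^{e_0}-1)} \cdot \mfa^{\lceil tp^{e+e_0}\rceil} \subset (\mfa^{\lceil tp^e\rceil})^{[p^{e_0}]}$, because any monomial of degree $k(p^{e_0}-1)+\lceil tp^{e+e_0}\rceil$ can be written as a $p^{e_0}$-th power of a monomial of degree at least $\lceil tp^e\rceil$ times a remainder. Since $\mfa \cap R^\circ \neq \emptyset$, pick $a_0 \in \mfa \cap R^\circ$ and set $\tilde c = a_0^{k(p^{e_0}-1)}(c')^{p^{e_0}} \in R^\circ$; then $\tilde c x^{p^f}\mfa^{\lceil tp^f\rceil} \subset \mfa^{\lceil tp^f\rceil}I^{[p^f]}$ for $f=e+e_0$ large, so $x \in {}^{\mfa}I^*$, with the Hara-Yoshida version handled identically.
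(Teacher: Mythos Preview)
Your argument is correct and follows essentially the same route as the paper's proof: both reduce modulo each minimal prime for the forward direction, assemble the witness $c'=\sum_\mfp c_\mfp d_\mfp$ via prime avoidance for the converse, apply a Frobenius power to kill the nilradical term, and then use a pigeonhole estimate on the generators of $\mfa$ (the paper cites \cite[Lemma~2.1]{Tay} while you prove the inclusion $\mfa^{k(p^{e_0}-1)}\mfa^{\lceil tp^{e+e_0}\rceil}\subset (\mfa^{\lceil tp^e\rceil})^{[p^{e_0}]}$ directly, with a slightly different but equally valid constant) together with $\mfa\cap R^\circ\neq\emptyset$ to pass from $(\mfa^{\lceil tp^e\rceil})^{[p^{e_0}]}$ back to $\mfa^{\lceil tp^{e+e_0}\rceil}$.
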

\begin{proof}
	Let $x \in$ $^{\mfa^t}I^*.$ By definition, there exists $c \in R^\circ$ such that $cx^{p^e} \mfa^{\lceil tp^e\rceil} \subset \mfa^{\lceil tp^e\rceil}I^{[p^e]}$ for all large $e.$ Thus, for any minimal prime $\mfp$ of $R,$ the image of $c$ in $R/\mfp$ is in $(R/\mfp)^\circ$ and $cx^{p^e} \mfa^{\lceil tp^e\rceil} + \mfp \subset \mfa^{\lceil tp^e\rceil} I^{[p^e]} + \mfp$ for all large $e$ implying that the image of $x$ in $R/\mfp$ is in $^{\mfb^t}((I+\mfp)/\mfp)^*.$
	
	Conversely, assume that for each minimal prime $\mfp$ of $R$, the image of $x$ in $R/\mfp$ is in $^{\mfb^t}((I+\mfp)/\mfp)^*.$ Let $c_\mfp \in R \setminus \mfp$ such that $c_\mfp x^{p^e} \mfa^{\lceil tp^e\rceil} \subset c_\mfp x^{p^e} \mfa^{\lceil tp^e\rceil} + \mfp \subset \mfa^{\lceil tp^e\rceil} I^{[p^e]} + \mfp$ for all $e$ large. By prime avoidance, there exists a non-zero element $d_\mfp \in R$ which is not in $\mfp$ but is in every other minimal prime of $R.$ Then $d_\mfp c_\mfp x^{p^e} \mfa^{\lceil tp^e\rceil} \subset \mfa^{\lceil tp^e\rceil}I^{[p^e]} + \sqrt{0}$  for all $e$ large. Set $c' = \sum_{\mfp} d_\mfp c_\mfp.$ Then $c' \in R^{\circ}$ and for all $e$ large,
	\[ c'x^{p^e} \mfa^{\lceil tp^e\rceil} \subset \mfa^{\lceil tp^e\rceil} I^{[p^e]} + \sqrt{0}. \]  
	If $R$ is reduced, then we're done since $\sqrt{0}=(0)$.  So, assume that $\mfa$ is of positive height.
	Since $R$ is a noetherian ring, there exists $e'$ such that $(\sqrt{0})^{[p^{e'}]} = (0).$ Thus, for all $e$ large, we have $(c')^{p^{e'}} x^{p^{e+e'}} (\mfa^{\lceil tp^e\rceil})^{[p^{e'}]} \subset (\mfa^{\lceil tp^e\rceil})^{[p^{e'}]} I^{[p^{e+e'}]}$. From \cite[Lemma~2.1]{Tay}, we know that $\mfa^{\lceil tp^{e+e'} + \mu(\mfa) p^{e'}\rceil} \subset (\mfa^{\lceil tp^e\rceil})^{[p^{e'}]}$. So, let $b \in \mfa^{\mu(\mfa) p^{e'}} \cap R^\circ$. For all $e$ large,
	\[ b(c')^{p^{e'}} x^{p^{e+e'}} \mfa^{\lceil tp^{e+e'}\rceil} 	\subset 
	(c')^{p^{e'}} x^{p^{e+e'}} (\mfa^{\lceil tp^e\rceil})^{[p^{e'}]} \subset  (\mfa^{\lceil tp^e\rceil})^{[p^{e'}]} I^{[p^{e+e'}]} \subset \mfa^{\lceil tp^{e+e'}\rceil} I^{[p^{e+e'}]}. \]
	Since $b(c')^{p^{e'}}\in R^\circ$, this shows $x \in$ $^{\mfa^t} I^*$.
	The proof for the ideal $I^{*\mfa^t}$ follows similarly.
\end{proof}

\subsection{\texorpdfstring{The $\mfa^t$-Frobenius closures}{The adjusted Frobenius closures}}

Throughout this subsection, we assume $R$ is of prime characteristic $p>0$.

\begin{defn}\label{defn: adjusted frobenius closures}
	Let $R$ be a ring, $I,\mfa\subset R$ be ideals, and $t \in \mathbb{R}_{>0}$. Then, the \textbf{Hara-Yoshida $\mfa^t$-Frobenius closure of $I$}, denoted $\ifat,$ is defined by
	\[ \ifat = \left\lbrace x \in R \left|\text{ for all } e \gg 0,\, x^{p^e}\mfa^{\lceil tp^e\rceil} \subset I^{\fbp{p^e}} \right.\right\rbrace. \] 
	Similarly, the \textbf{Vraciu $\mfa^t$-Frobenius closure of $I$}, denoted $\atif$, is defined by
	\[ \atif = \left\lbrace x \in R \left|\text{ for all } e \gg 0,\, x^{p^e}\mfa^{\lceil tp^e\rceil} \subset \mfa^{\lceil tp^e\rceil}I^{\fbp{p^e}} \right.\right\rbrace. \] As before, we note that the Hara-Yoshida $\mfa^t$-Frobenius closure is a pre-closure (\Cref{prop: basics of adjusted Frobenius closure}) and not an ideal closure operation (\Cref{F-principal}(a)), whereas the Vraciu $\mfa^t$-Frobenius closure is an ideal closure operation (\Cref{prop: basics of adjusted Frobenius closure}).
\end{defn}

We now record several basic properties of these operations. 

\begin{proposition}\label{prop: basics of adjusted Frobenius closure}
Let $R$ be a ring, $I,\mfa\subset R$ be ideals, and $t \in \mathbb{R}_{>0}$. Then, we have the following.
\begin{enumerate}[{\rm(a)}]
\item $\atif$ and $\ifat$ are ideals of $R.$
\item $I^F \subset \atif\subset\ifat.$
\item If $I\subset J$, $\mathfrak{b}\subset\mfa$, and $t \le s$, then $\ifat \subset J^{F\mathfrak{b}^s}$ and $\atif \subset {}^{\mfa^s}J^F.$ 
\item $^{\mfa^t}(\atif)^F = \atif.$
\end{enumerate}
\end{proposition}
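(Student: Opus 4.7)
The plan is to handle the four items in order, using the preceding facts about the operations and standard prime-characteristic arithmetic. Parts (a)--(c) will be routine manipulations with Frobenius powers, while (d) is the substantive content and will use the Noetherian hypothesis in an essential way.

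For (a), I would prove closure under addition first. Given $x,y$ in $\ifat$ with witness exponent thresholds $e_x$ and $e_y$, use the freshman dream $(x+y)^{p^e}=x^{p^e}+y^{p^e}$ in characteristic $p$, so that for $e\geq \max(e_x,e_y)$ we get
\[
(x+y)^{p^e}\mfa^{\lceil tp^e\rceil} \subseteq x^{p^e}\mfa^{\lceil tp^e\rceil} + y^{p^e}\mfa^{\lceil tp^e\rceil} \subseteq I^{[p^e]}.
\]
Closure under multiplication by $r\in R$ follows from $(rx)^{p^e}\mfa^{\lceil tp^e\rceil}=r^{p^e}\,x^{p^e}\mfa^{\lceil tp^e\rceil}\subseteq r^{p^e}I^{[p^e]}\subseteq I^{[p^e]}$. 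The same argument, carried out verbatim but with the target $\mfa^{\lceil tp^e\rceil}I^{[p^e]}$ in place of $I^{[p^e]}$, handles $\atif$.

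For (b), if $x\in I^F$ then $x^{p^e}\in I^{[p^e]}$ for all large $e$, so $x^{p^e}\mfa^{\lceil tp^e\rceil}\subseteq I^{[p^e]}\mfa^{\lceil tp^e\rceil}=\mfa^{\lceil tp^e\rceil}I^{[p^e]}$, giving $x\in\atif$; the inclusion $\atif\subseteq\ifat$ is immediate from $\mfa^{\lceil tp^e\rceil}I^{[p^e]}\subseteq I^{[p^e]}$. For (c), under the hypotheses $I\subseteq J$, $\mathfrak{b}\subseteq\mfa$, $t\leq s$, we have $\mathfrak{b}^{\lceil sp^e\rceil}\subseteq\mfa^{\lceil sp^e\rceil}\subseteq\mfa^{\lceil tp^e\rceil}$, so a witness $x^{p^e}\mfa^{\lceil tp^e\rceil}\subseteq I^{[p^e]}$ for the Hara--Yoshida side immediately gives $x^{p^e}\mathfrak{b}^{\lceil sp^e\rceil}\subseteq J^{[p^e]}$. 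For the Vraciu side, multiply the witness $x^{p^e}\mfa^{\lceil tp^e\rceil}\subseteq \mfa^{\lceil tp^e\rceil}I^{[p^e]}$ by $\mfa^{\lceil sp^e\rceil-\lceil tp^e\rceil}$ to obtain $x^{p^e}\mfa^{\lceil sp^e\rceil}\subseteq \mfa^{\lceil sp^e\rceil}I^{[p^e]}\subseteq \mfa^{\lceil sp^e\rceil}J^{[p^e]}$.

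Part (d) is the only interesting step and is where the Noetherian hypothesis enters. One containment $\atif \subseteq {}^{\mfa^t}(\atif)^F$ is a formal consequence of (b) applied to the ideal $\atif$. For the reverse, since $R$ is Noetherian, $\atif$ is finitely generated, say $\atif=(y_1,\ldots,y_k)$, so $(\atif)^{[p^e]}=(y_1^{p^e},\ldots,y_k^{p^e})$. For each $i$ there is $e_i$ such that $y_i^{p^e}\mfa^{\lceil tp^e\rceil}\subseteq \mfa^{\lceil tp^e\rceil}I^{[p^e]}$ for $e\geq e_i$. If $x\in {}^{\mfa^t}(\atif)^F$, then for $e$ past the relevant threshold
\[
x^{p^e}\mfa^{\lceil tp^e\rceil} \;\subseteq\; \mfa^{\lceil tp^e\rceil}(\atif)^{[p^e]} \;=\; \sum_{i=1}^{k} y_i^{p^e}\mfa^{\lceil tp^e\rceil}.
\]
Taking $e\geq \max_i e_i$ combined with the above, each summand lies inside $\mfa^{\lceil tp^e\rceil}I^{[p^e]}$, so $x\in\atif$. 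The main obstacle is precisely this uniform bound over the generators, which is why the finite generation of $\atif$ (and not merely its containment in some ambient finitely generated ideal) is needed to pool the individually "for all $e\gg 0$" conditions into a single threshold.
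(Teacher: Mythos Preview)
Your proof is correct and follows essentially the same approach as the paper's. The only differences are stylistic: the paper phrases (a) in terms of the colon ideals $I^{[p^e]}:\mfa^{\lceil tp^e\rceil}$ and $I^{[p^e]}\mfa^{\lceil tp^e\rceil}:\mfa^{\lceil tp^e\rceil}$ and declares the ideal property ``obvious,'' while you unpack the Frobenius additivity explicitly; similarly, in (c) for the Vraciu side the paper writes the chain of colon-ideal inclusions directly, whereas you perform the equivalent step of multiplying by $\mfa^{\lceil sp^e\rceil-\lceil tp^e\rceil}$. For (d), both arguments pass to a finite generating set of $\atif$ and pool the individual thresholds, exactly as you describe.
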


\begin{proof}
We will prove each in turn.

\begin{enumerate}[(a)]
\item Notice that $x \in \ifat$ if and only if $x^{p^e} \in I^{\fbp{p^e}}:\mfa^{\lceil tp^e \rceil}$ for all $e \gg 0$, from whence it is obvious that $\ifat$ is an ideal of $R$. Similarly, $x\in \atif$ if and only if $x^{p^e} \in I^{\fbp{p^e}}\mfa^{\lceil tp^e \rceil}:\mfa^{\lceil tp^e \rceil}$ for all $e\gg 0$, so $\atif$ is also an ideal of $R$.

\item Since $I^{\fbp{p^e}} \subset I^{\fbp{p^e}}\atpe:\atpe\subset I^{\fbp{p^e}}:\atpe$, for all $e\in \mbn$, we have $I \subset I^F \subset \atif \subset \ifat$.
 
\item Since $I\subset J$, $I^{\fbp{p^e}}\subset J^{\fbp{p^e}}$ for all $e$, we have $I^{\fbp{p^e}}:\atpe \subset J^{\fbp{p^e}}:\mathfrak{b}^{\lceil sp^e\rceil}$ for all $e$. Thus, $\ifat \subset J^{F\mathfrak{b}^s}$. Similarly, $I^{\fbp{p^e}}\atpe :\atpe \subset J^{\fbp{p^e}} \atpe:\atpe \subset J^{[p^e]} \mfa^{\lceil sp^e \rceil} : \mfa^{\lceil sp^e \rceil}$ for all $e$, implying that $\atif \subset {}^{\mfa^s}J^F$. 

\item From part b), it follows that $\atif \subset {}^{\mfa^t}(\atif)^F.$ Now, let $x \in {}^{\mfa^t}(\atif)^F.$ Then for all $e \gg 0,$ $x^{p^e} \atpe \subset \atpe (\atif)^{[p^e]} \subset \atpe I^{[p^e]},$ where the last containment holds by checking the generators of $\atif$ and using that $\atif$ is a finitely generated ideal.
\qedhere
\end{enumerate}\end{proof}

Note the differences in the montonicity properties of the Hara-Yoshida and Vraciu $\mfa^t$-Frobenius closures outlined in (c) above. In fact, as for the Vraciu $\mfa^t$-tight closure (see \cite[Observation~5.1]{Vraciu}), the following example illustrates that it is not always true that $^\mfa I^F \subset$ $^\mathfrak{b} I^F$ if $\mathfrak{b} \subset \mfa.$

\begin{xmp}
Let $R = k[x,y],$ where $k$ is a field of prime characteristic $p>0.$ Let $\mfa = (x,y)^2,$ $\mathfrak{b} = (f),$ where $f \in \mfa$ and $I = (x^2,y^2).$ From \Cref{F-principal}(b), we know that $^\mathfrak{b} I^F = I^F = I$ as $R$ is a regular ring. But $xy \in$ $^\mfa I^F$ as $(xy)^{p^e} \mfa^{p^e} \subset I^{[p^e]} \mfa^{p^e}$ for all $e$.
\end{xmp}

Unlike $\mfa^t$-integral closure and $\mfa^t$-tight closure, $\mfa^t$-Frobenius closure cannot be computed using the corresponding closure ideal after going modulo the minimal primes. We use \Cref{example: frob closure can't be checked mod minimal primes} to check that \Cref{prop: cl via min primes} may not hold true for $\atif$ and $\ifat$.

\begin{xmp}
	Let $R=\mathbb{F}_3[X,Y,Z]/(Y^6-X^3,X^2-Z^2)$ and let $x,y,$ and $z$ denote the images of $X,Y,$ and $Z$ respectively in $R.$ Let $I=(x)$ and $\mfa = (x,y^2)$. We claim that $z \notin$ $^{\mfa} I^F.$ Indeed, if $z \in$ $^{\mfa}I^F,$ then there exists $e$ such that $z^{3^e} (x,y^2)^{3^e} \subset \mfa^{3^e}I^{[3^e]}$. In particular, $z^{3^e}x^{3^e} \in \mfa^{3^e}I^{[3^e]} = (x,y^2)^{3^e}(x^{3^e})$.  Therefore, $zx^{3^e-1}x^{3^e} = z(x^2)^{(3^e-1)/2}x^{3^e} = z(z^2)^{(3^e-1)/2}x^{3^e} = z^{3^e}x^{3^e} \in (x,y^2)^{3^e}(x^{3^e})$.
	Since $x^{3^e}$ is a nonzerodivisor, we get $z \in (x,y^2)^{3^e} : x^{3^e-1} = (x,y^2)$ which is not true. Thus, the claim holds. Note that $\operatorname{Min} R=\{(x-z,y^2-z),(x+z,y^2+z) \}$ and so modulo either minimal prime, the image of $z$ is the same as the image of $x.$ Thus, $z+\mathfrak{p} \in ((x)+\mathfrak{p})/\mathfrak{p} \subset$ $^{\mfa} ((x)+\mathfrak{p})^F/\mathfrak{p}$, but $z\not \in$ $^{\mfa}(x)^F$.
	
	Now let $I = (x+y^2)$ and $\mfa = (x,y^2).$ Then one can check that modulo any minimal prime $\mathfrak{p},$ the element $(1+\mfp)/\mfp$ is in $((I+\mathfrak{p})/\mathfrak{p})^{F\mfa}$ but $1\notin I^{F\mfa}.$ In particular, it is enough to show that $(\mfa^{3^e}+\mathfrak{p})/\mathfrak{p} \subset (I^{[3^e]}+\mathfrak{p})/\mathfrak{p}$ for every minimal prime $\mathfrak{p}$ and for all $e$ large but $\mfa^{3^e} \not\subset I^{[3^e]}$ for any $e.$ Indeed, modulo any minimal prime $\mfp,$ 
	\[ \frac{I^{[3^e]} + \mfp}{\mfp} = \frac{(x^{3^e}+(y^2)^{3^e})+\mfp}{\mfp} = \frac{(x^{3^e} + x^{3^e}) + \mfp}{\mfp} = \frac{(x^{3^e}) + \mfp}{\mfp} = \frac{\mfa^{3^e}+\mfp}{\mfp}, \]
	for all $e.$ Whereas in the ring $R$, if $\mfa^{3^e} \subset I^{[3^e]}$ for some $e,$ then $x^{3^e-1}y^2 \in (x^{3^e} + (y^2)^{3^e}) = (x^{3^e}).$ But as $x$ is a nonzerodivisor, this means, $y^2 \in (x)$ which is not true. 
\end{xmp}

The following observations demonstrate several cases where the adjusted Frobenius closures are simple to calculate or trivially related to another operation.

\begin{proposition}  \label{F-principal}
	Let $R$ be a ring, $I,\mfa\subset R$ be ideals, and $t \in \mathbb{R}_{>0}$.
	\begin{enumerate}[{\rm(a)}]
		\item We have $I^{F\mfa} \subset (I^F : \mfa),$ and the inclusion is an equality if $\mfa$ is a principal ideal. In particular, if $\mfa = (f)$, we have $(I^{F\mfa})^{F\mfa} \neq I^{F\mfa}$ when $(R,\mfm)$ is local, $I$ is $\mfm$-primary, and $f \in \mfm \setminus I^F.$
		\item If $\mfa = (f)$ is a principal ideal, and $f$ is a non-zerodivisor on $R$, then $\,^{\mfa^t}I^F = I^F.$
		\item If $t > \mu(\mfa)$ then
		\begin{enumerate}[{\rm(i)}]
			\item $\ifat = (I^{F\mfa^{t-1}} : \mfa),$ and
			\item $\atif = (^{\mfa^{t-1}}(\mfa I)^F : \mfa).$ 
		\end{enumerate}
		\item If $\mfa \subset \sqrt{0}$, then for all ideals $I\subset R$ and $t \in \mathbb{R}_{>0}$, we have $\atif = R$. 
	\end{enumerate}
\end{proposition}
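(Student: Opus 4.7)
The proof of \Cref{F-principal} splits into four largely independent arguments, which I would tackle in the order (a), (b), (d), (c), leaving the most technical part for last.

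For part (a), the containment $I^{F\mfa}\subset (I^F:\mfa)$ is immediate: if $x\in I^{F\mfa}$, then for any $y\in \mfa$ and $e\gg 0$, $(xy)^{p^e}=x^{p^e}y^{p^e}\in x^{p^e}\mfa^{p^e}\subset I^{[p^e]}$, so $xy\in I^F$. When $\mfa=(f)$, the reverse follows from $(xf)^{p^e}=x^{p^e}f^{p^e}$ together with the fact that Frobenius-power containment at one $e$ propagates to all larger $e$. For the idempotency claim, I would first verify that $(I^F:f)$ is itself Frobenius-closed by unwinding the definition: if $x^{p^e}=\sum c_ib_i^{p^e}$ with $b_if\in I^F$, multiplying by $f^{p^e}$ and iterating Frobenius yields $(xf)^{p^{e+E}}\in I^{[p^{e+E}]}$ for $E$ sufficiently large. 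Consequently $(I^{F\mfa})^{F\mfa}=((I^F:f)^F:f)=(I^F:f^2)$. The strict inequality $(I^F:f)\subsetneq (I^F:f^2)$ comes from a chain-stabilization argument: if they were equal, an induction writing $xf^n=(xf^{n-2})f^2$ gives $(I^F:f)=(I^F:f^n)$ for all $n\geq 1$; but since $I^F$ is $\mfm$-primary and $f\in\mfm$, some $f^N\in I^F$ forces $(I^F:f)=R$, contradicting $f\notin I^F$.

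For part (b), since $f^{\lceil tp^e\rceil}$ is a non-zero-divisor, the defining condition $x^{p^e}(f^{\lceil tp^e\rceil})\subset (f^{\lceil tp^e\rceil})I^{[p^e]}$ cancels to $x^{p^e}\in I^{[p^e]}$, which is precisely $I^F$-membership; the reverse containment is \Cref{prop: basics of adjusted Frobenius closure}(b). Part (d) is immediate by Noetherianity: $\mfa\subset \sqrt{0}$ forces $\mfa^N=0$ for some $N$, and for $e$ with $\lceil tp^e\rceil\geq N$ every $x\in R$ satisfies $x^{p^e}\mfa^{\lceil tp^e\rceil}=0\subset \mfa^{\lceil tp^e\rceil}I^{[p^e]}$.

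Part (c) is the most substantial, and its essential tool is the standard pigeonhole fact that if $\mfa=(y_1,\ldots,y_\mu)$ and $k\geq \mu(n-1)+1$, then every monomial generator of $\mfa^k$ contains some $y_i^n$, giving $\mfa^k\subset \mfa^{[n]}\mfa^{k-n}$. The hypothesis $t>\mu(\mfa)$, together with the integer identity $\lceil tp^e\rceil=p^e+\lceil(t-1)p^e\rceil$, ensures $\lceil tp^e\rceil\geq \mu p^e+1\geq \mu p^e-\mu+1$, so the inclusion $\mfa^{\lceil tp^e\rceil}\subset \mfa^{[p^e]}\mfa^{\lceil(t-1)p^e\rceil}$ holds for all $e$. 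For (i), the forward containment uses only $(xy)^{p^e}\mfa^{\lceil(t-1)p^e\rceil}\subset x^{p^e}\mfa^{\lceil tp^e\rceil}\subset I^{[p^e]}$; for the reverse, each condition $x^{p^e}y_i^{p^e}\mfa^{\lceil(t-1)p^e\rceil}\subset I^{[p^e]}$ sums over $i$ to give $x^{p^e}\mfa^{[p^e]}\mfa^{\lceil(t-1)p^e\rceil}\subset I^{[p^e]}$, and combining with the pigeonhole inclusion delivers $x^{p^e}\mfa^{\lceil tp^e\rceil}\subset I^{[p^e]}$. The proof of (ii) follows the same pattern using the identity $(\mfa I)^{[p^e]}=\mfa^{[p^e]}I^{[p^e]}$, with the pigeonhole inclusion invoked in both directions to compare $\mfa^{\lceil tp^e\rceil}I^{[p^e]}$ with $\mfa^{\lceil(t-1)p^e\rceil}(\mfa I)^{[p^e]}$. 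The main technical obstacles are verifying Frobenius-closedness of $(I^F:f)$ and deducing the strictness $(I^F:f)\subsetneq (I^F:f^2)$ in part (a), together with carefully tracking the ceiling arithmetic in part (c) to confirm that the hypothesis $t>\mu(\mfa)$ is exactly what delivers the required pigeonhole bound.
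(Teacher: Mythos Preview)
Your proof is correct and follows essentially the same approach as the paper: the pigeonhole identity $\mfa^{\lceil tp^e\rceil}=\mfa^{[p^e]}\mfa^{\lceil(t-1)p^e\rceil}$ for $t>\mu(\mfa)$ is exactly the key to part (c), and parts (b) and (d) are handled identically. You are in fact more thorough in part (a): the paper simply asserts $(I^{F(f)})^{F(f)}=(I^F:f^2)$ and its strict containment of $(I^F:f)$ without further comment, whereas you supply both the Frobenius-closedness of $(I^F:f)$ and the chain-stabilization argument that forces the strictness under the stated hypotheses.
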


\begin{proof}
	\begin{enumerate}[(a)]
		\item The first statement holds using the definition. Now, if $\mfa = (f)$ is a principal ideal, then 
		\begin{align*}
			x \in I^{F\mfa} 
			&\Leftrightarrow \text{ for all } e \gg 0, x^{p^e} (f)^{p^e} \in I^{[p^e]}\\
			&\Leftrightarrow \text{ for all } e \gg 0, (xf)^{p^e} \in I^{[p^e]}, 
			\\
			&\Leftrightarrow xf\in I^F\\
			&\Leftrightarrow x \in I^F : (f).
		\end{align*} Thus, $(I^{F(f)})^{F(f)} = (I^F : (f)^2)$ which is strictly larger than $I^{F(f)} = I^F : (f)$ under the given assumptions by Nakayama's lemma, as $xf \in I^F$ implies $xf^2 \in I^F\cdot(f) \subsetneq I^F$.
		
		\item Follows immediately from the definition of a non-zerodivisor.
		
		\item Note that as $t > \mu(\mfa),$ $\mfa^{\lceil tp^e \rceil} = \mfa^{\lceil (t-1)p^e \rceil} \mfa^{[p^e]}.$ Thus, 
		\[ x \in \ifat \Leftrightarrow \forall e\gg 0, x^{p^e} \in I^{[p^e]} : \mfa^{\lceil tp^e \rceil} = I^{[p^e]} : \mfa^{\lceil (t-1)p^e \rceil} \mfa^{[p^e]} \Leftrightarrow x \in (I^{F\mfa^{t-1}} : \mfa) \]
		and
		\[ x \in \atif \Leftrightarrow \forall e\gg 0, x^{p^e} \in I^{[p^e]} \mfa^{\lceil tp^e \rceil} : \mfa^{\lceil tp^e \rceil} = (\mfa I)^{[p^e]} \mfa^{\lceil (t-1)p^e \rceil} : \mfa^{\lceil (t-1)p^e \rceil} \mfa^{[p^e]} \Leftrightarrow x \in (^{\mfa^{t-1}}(\mfa I)^F : \mfa).\]
		\item If $\mfa^n=0$ for some $n$, then for all $e \gg 0$ and $x \in R$, we have $x^{p^e} \cdot 0 \subset I^{[p^e]}\cdot 0 = (0).$ \qedhere
	\end{enumerate}
\end{proof}

\section{Comparing the new closures}\label{sec: comparing the new closures}

In this subsection, we compare the different adjusted closure operations with one another. All statements which involve the adjusted integral closure only are characteristic independent, but whenever the adjusted tight or Frobenius closure operations are mentioned, we tacitly assume the ambient ring is of prime characteristic $p>0$. Furthermore, in any statement where we invoke the $\mfa^t$-integral closure, we assume $I$ and $\mfa$ have positive height.

\begin{remark} \label{rmk: closure comparison diagram}

By taking $c=1$ in the definition of the (adjusted) $\mfa^t$-tight closures, we can easily see $I^{F\mfa^t} \subset I^{*\mfa^t}$ and $\,^{\mfa^t} I^F \subset \,^{\mfa^t}I^*$. Similarly, the adjusted $\mfa^t$-tight closures are inside the corresponding adjusted $\mfa^t$-integral closure, since if $x\in I^{*\mfa^t}$, then for some $c\in R^\circ$ and all $e\gg 0$, $cx^{p^e}\mfa^{\lceil tp^e\rceil}\subset I^{[p^e]}\subseteq I^{p^e}$ and so $x\in \iatb$ by \Cref{cor: inf many n is enough for integral closure}, and similarly for $ \,^{\mfa^t}I^*\subset \atib = \overline{I}$.

In general, we have the following inclusions of ideals, fixing $I$, $\mfa$, and $t\in \R_{>0}$, and assuming $I$ and $\mfa$ are of positive height. 

\begin{center}\begin{tikzpicture}
\node[scale=1] at (0,0) {
	\begin{tikzcd}
		I \arrow[hook]{r} & I^F \arrow[hook]{r}\arrow[hook]{d}& \atif\arrow[hook]{d}\arrow[hook]{r} & \ifat \arrow[hook]{d} \\
		\, & I^* \arrow[hook]{r}\arrow[hook]{d} & \atistar \arrow[hook]{r}\arrow[hook]{d} & \istarat \arrow[hook]{d} \\
		\, & \overline{I} \arrow[equal]{r} & \atib \arrow[hook]{r} & \iatb
	\end{tikzcd}
};
\end{tikzpicture}
\end{center}
\end{remark}

Later in Section 6 (\Cref{MegaExample}), we see that the inclusions above are generally strict. We note that each of the adjusted closure operations are insensitive to replacing the base ideal $I$ with its corresponding unadjusted ideal closure operation.

\begin{proposition}\label{prop: may replace ideals with their ordinary closures}
Let $I,\mfa\subset R$ be ideals of positive height and $t\in \R_{>0}$. Then, we have the following equalities.
\begin{enumerate}[{\rm(a)}]
\item $(I^F)^{F \mfa^t} = I^{F\mfa^t }$ and $\,^{\mfa^t}(I^F)^F = \, ^{\mfa^t}I^F$, and
\item $(I^*)^{*\mfa^t} = I^{*\mfa^t}$ and $\,^{\mfa^t}(I^*)^* = \, ^{\mfa^t} I^*$, and
\item $\overline{J}^{\mfa^t} = \overline{I}^{\mfa^t}$, where $J=\overline{I}$. In particular, if $L \subset I$ is a reduction, then $\overline{L}^{\mfa^t} = \overline{I}^{\mfa^t}.$
\end{enumerate}
\end{proposition}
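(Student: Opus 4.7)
The plan is to prove each equality by establishing both inclusions separately. In every case, the ``$\supset$'' inclusion is immediate from monotonicity of each closure in its base ideal: since $I\subset I^F$, $I\subset I^*$, and $I\subset\overline{I}$, Propositions~\ref{prop: basic properties of integral closures}(c) and~\ref{prop: basics of adjusted Frobenius closure}(c), together with the analogous statement for the adjusted tight closures, yield
\[ I^{F\mfa^t}\subset (I^F)^{F\mfa^t},\quad I^{*\mfa^t}\subset (I^*)^{*\mfa^t},\quad \overline{I}^{\mfa^t}\subset \overline{J}^{\mfa^t}, \]
as well as their Vraciu analogues. The content therefore lies in the reverse inclusions, each of which reduces to a uniform comparison between $I$ and the closure of $I$ appearing as the new base ideal.

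For~(a), the key intermediate step is that $(I^F)^{\fbp{p^e}} = I^{\fbp{p^e}}$ for all $e\gg 0$. Since $R$ is Noetherian, $I^F$ is finitely generated, say by $y_1,\dots,y_k$; for each $j$ pick $e_j$ with $y_j^{p^{e_j}}\in I^{\fbp{p^{e_j}}}$ and set $e_0=\max_j e_j$, so that $(I^F)^{\fbp{p^{e_0}}}\subset I^{\fbp{p^{e_0}}}$, while the reverse inclusion is trivial from $I\subset I^F$. Taking $\fbp{p^{e-e_0}}$-powers extends the equality to all $e\geq e_0$. Substituting this equality into the defining conditions for $(I^F)^{F\mfa^t}$ and $\,^{\mfa^t}(I^F)^F$ then immediately produces the desired containments.

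For~(b), I would invoke the uniform-multiplier statement that there exists $c_0\in R^\circ$ with $c_0(I^*)^{\fbp{p^e}}\subset I^{\fbp{p^e}}$ for all $e\gg 0$. Choosing witnesses $c_j\in R^\circ$ for each of finitely many generators $y_j$ of $I^*$, one takes $c_0=\prod_j c_j$ and expands a general element of $(I^*)^{\fbp{p^e}}$ in the $y_j^{p^e}$. Given $x\in (I^*)^{*\mfa^t}$ with witness $c\in R^\circ$, the product $cc_0\in R^\circ$ then satisfies $cc_0\,x^{p^e}\mfa^{\lceil tp^e\rceil}\subset c_0(I^*)^{\fbp{p^e}}\subset I^{\fbp{p^e}}$ for all $e\gg 0$, placing $x$ in $I^{*\mfa^t}$. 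The Vraciu version follows by retaining the extra factor $\mfa^{\lceil tp^e\rceil}$ on the right-hand side throughout.

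For~(c), the analogous fact is the classical uniform bound that there is $c_0\in R^\circ$ with $c_0\overline{I}^n\subset I^n$ for all $n\gg 0$, following from $\overline{I}^n\subset \overline{I^n}$ and \cite[Corollary~6.8.12]{hunekeSwanson} (already used in the proof of Proposition~\ref{prop: basic properties of integral closures}(b)). Given $x\in \overline{J}^{\mfa^t}$ with witness $c\in R^\circ$, the product $cc_0\in R^\circ$ satisfies $cc_0\,x^n\mfa^{\lceil tn\rceil}\subset c_0\overline{I}^n\subset I^n$ for all $n\gg 0$, so $x\in \overline{I}^{\mfa^t}$. The main obstacle I anticipate is simply producing the correct citations or short standalone verifications of the uniform-multiplier facts in~(b) and~(c); once those are in hand the rest of the argument is a routine substitution.
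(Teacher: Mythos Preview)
Your proposal is correct and follows essentially the same approach as the paper: monotonicity for one inclusion, and for the reverse the Frobenius test exponent in~(a), a uniform multiplier $c_0\in R^\circ$ with $c_0(I^*)^{\fbp{p^e}}\subset I^{\fbp{p^e}}$ in~(b), and \cite[Corollary~6.8.12]{hunekeSwanson} in~(c). The only minor difference is in handling the Vraciu equalities: the paper dispatches these first via idempotency (Proposition~\ref{prop: basics of adjusted Frobenius closure}(d) and its tight-closure analogue), sandwiching $I\subset I^F\subset \atif$ and applying ${}^{\mfa^t}(-)^F$ to get $\atif\subset {}^{\mfa^t}(I^F)^F\subset {}^{\mfa^t}(\atif)^F=\atif$, whereas you repeat the direct substitution argument with the extra $\mfa^{\lceil tp^e\rceil}$ factor on the right---both are valid.
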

\begin{proof}
Since $^{\mfa^t}(\atif)^F = \atif$ and $^{\mfa^t}(\atistar)^* = \atistar,$ it is easy to check the equalities using the inclusions $I \subset I^F \subset \atif$ and $I \subset I^* \subset \atistar,$ so we turn to the Hara-Yoshida versions.
\begin{enumerate}[(a)]
\item We will show $(I^F)^{F\mfa^t}=I^{F\mfa^t}$. As $I\subset I^F$, we have $I^{F\mfa^t}\subset (I^F)^{F\mfa^t}$. To see the reverse containment, let $x \in (I^F)^{F\mfa^t}$. Then $x^{p^e} \mfa^{\lceil tp^e\rceil } \subset (I^F)^{\fbp{p^e}} = I^{\fbp{p^e}}$ for all $e$ large. Thus, $x \in I^{F\mfa^t}$.

\item  We will show $(I^*)^{*\mfa^t}=I^{*\mfa^t}$. As $I\subset I^*$, we have $I^{*\mfa^t}\subset (I^*)^{*\mfa^t}$. To see the reverse containment, let $x \in (I^*)^{*\mfa^t}$. Then there exists $c \in R^\circ$ such that $cx^{p^e} \mfa^{\lceil tp^e\rceil } \subset (I^*)^{\fbp{p^e}}$ for all $e$ large. 

For the moment, write $I^* = (x_1,\ldots,x_n)$. For each $i$ there is a $c_i \in R^\circ$ such that $c_ix_i^{p^e} \in I^{\fbp{p^e}}$ for all $e \gg 0$, and so taking $d = \prod_i c_i$, for all $e \gg 0$ and $y \in I^*$, $dy^{p^e} \in I^{\fbp{p^e}}$. Hence $d(I^*)^{\fbp{p^e}} \subset I^{\fbp{p^e}}$ for all $e \gg 0$. Thus for $e$ large, $cdx^{p^e} \mfa^{\lceil tp^e\rceil } \subset I^{\fbp{p^e}}$ implying that $x \in I^{*\mfa^t}$.

\item This proof is nearly identical to the tight closure case,  but we include it for completeness. First, as $I\subset J$, we have $\overline{I}^{\mfa^t}\subset \overline{J}^{\mfa^t}$ for any $t$. To see the reverse containment, let $x \in \overline{J}^{\mfa^t}$. Then, there is a $c \in R^\circ$ such that for all $n \gg 0$, $cx^n\mfa^{\lceil nt\rceil}\subset J^n$. 

For the moment, write $J=(x_1,\ldots,x_m)$. As $J = \overline{I},$ by \cite[Corollary~6.8.12]{hunekeSwanson} and Proposition \ref{prop: integral closure for all n works}, we have for each $i$ a $c_i \in R^\circ$ such that $c_i x_i^n \in I^n$ for all $n \geq 0$. So, by taking $d=\prod_i c_i$, we have for any $y \in J^n$, $dy \in I^n$ for all $n \geq 0$, namely $d J^n \subset I^n$ for all $n \geq 0$. Hence, $dcx^n \mfa^{\lceil nt \rceil} \subset dJ^n \subset I^n$ for all $n \gg 0$, so $x \in \overline{I}^{\mfa^t}$.
 \qedhere
\end{enumerate}
\end{proof}

In \cite[Proposition~1.3(4)]{HY}, the authors prove that the Hara-Yoshida $\mfa$-tight closure of an ideal $I$ is unaffected by replacing $\mfa$ with any ideal with the same integral closure. We prove that the same holds for Vraciu $\mfa^t$-tight closure and $\mfa^t$-integral closure as well.

\begin{proposition}
Let $R$ be a ring, $I,\mfa,\mfb \subset R$ be ideals, and suppose $\mfb \subset \mfa$ is a reduction. Then $^{\mfa^t}I^* =$ $^{\mfb^t}I^*$  and $\overline{I}^{\mfa^t} = \overline{I}^{\mfb^t}.$
\end{proposition}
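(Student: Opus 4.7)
The plan is to exploit the defining feature of a reduction: since $\mfb \subset \mfa$ is a reduction, there exists an integer $k \geq 0$ with $\mfa^{m+k} = \mfb^m \mfa^k$ for all $m \geq 0$, equivalently $\mfa^m = \mfb^{m-k}\mfa^k$ for all $m \geq k$. As a preliminary, I would verify that $\mfb \cap R^\circ \neq \emptyset$: since $\mfa \subset \overline{\mfb}$, any minimal prime $\mfp$ of $R$ containing $\mfb$ also contains every element of $\overline{\mfb}$ (via the monic integral equation, reduced in the domain $R/\mfp$), hence contains $\mfa$. The hypothesis $\mfa \cap R^\circ \neq \emptyset$ therefore forces $\mfb \cap R^\circ \neq \emptyset$, and the $k$-th power of such an element provides $b_0 \in \mfb^k \cap R^\circ$.

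The single translation device I would record is the uniform inclusion, valid whenever $\lceil tn\rceil \geq k$ (and analogously for $p^e$):
\[
b_0\, \mfa^{\lceil tn\rceil}
= b_0\,\mfb^{\lceil tn\rceil - k}\,\mfa^k
\subset \mfb^k\,\mfb^{\lceil tn\rceil-k}\,\mfa^k
\subset \mfb^{\lceil tn\rceil}.
\]
Every containment that follows is a one-line manipulation using this and the definitions.

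For the integral closure equality, $\iatb \subset \overline{I}^{\mfb^t}$ is immediate from the monotonicity in \Cref{prop: basic properties of integral closures}(c) (larger auxiliary ideal, smaller closure). For the reverse, given $x \in \overline{I}^{\mfb^t}$ with witness $c \in R^\circ$, the translation device yields $(b_0c)x^n\mfa^{\lceil tn\rceil} \subset cx^n\mfb^{\lceil tn\rceil} \subset I^n$ for all large $n$, so $b_0c \in R^\circ$ witnesses $x \in \iatb$.

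For the Vraciu tight closure equality, monotonicity in the auxiliary ideal is not on hand, so I would argue both containments by reduction. If $x \in {}^{\mfb^t}I^*$ with witness $c$, I would chain the translation device with the hypothesis: $b_0cx^{p^e}\mfa^{\lceil tp^e\rceil} \subset cx^{p^e}\mfb^{\lceil tp^e\rceil} \subset \mfb^{\lceil tp^e\rceil}I^{[p^e]} \subset \mfa^{\lceil tp^e\rceil}I^{[p^e]}$, placing $x \in \atistar$. Conversely, for $x \in \atistar$ with witness $c$, I would first use $\mfb \subset \mfa$ to get $cx^{p^e}\mfb^{\lceil tp^e\rceil} \subset \mfa^{\lceil tp^e\rceil}I^{[p^e]}$, then rewrite $\mfa^{\lceil tp^e\rceil} = \mfb^{\lceil tp^e\rceil-k}\mfa^k$ and multiply through by $b_0 \in \mfb^k$ to collapse the right-hand side, obtaining $b_0cx^{p^e}\mfb^{\lceil tp^e\rceil} \subset \mfb^{\lceil tp^e\rceil}\mfa^k I^{[p^e]} \subset \mfb^{\lceil tp^e\rceil}I^{[p^e]}$, placing $x \in {}^{\mfb^t}I^*$. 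The only genuinely delicate step is this last direction: because the auxiliary ideal appears on both sides of the Vraciu containment, I must apply the translation device to a single occurrence at a time rather than symmetrically, which is easy to mishandle and is the one place where I would take care to write the chain of inclusions in full.
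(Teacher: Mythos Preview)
Your proposal is correct and follows essentially the same approach as the paper's proof: both exploit the reduction relation $\mfa^{m+k}=\mfb^m\mfa^k$ together with an auxiliary multiplier in $R^\circ$ (you take $b_0\in\mfb^k\cap R^\circ$, the paper takes $d\in\mfa^k\cap R^\circ$) to pass between the $\mfa$- and $\mfb$-versions of the defining containments. Your explicit verification that $\mfb\cap R^\circ\neq\emptyset$ is a nice addition the paper leaves implicit.
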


\begin{proof}
	As $\mfb \subset \mfa$ is a reduction, there exists $k$ such that $\mfb^m \mfa^k = \mfa^{m+k},$ for all $m \geq 1.$ As $\mfa \cap R^\circ \neq \emptyset,$ pick $d \in \mfa^k \cap R^\circ.$
	
We first consider the adjusted tight closures. Let $x \in$ $^{\mfb^t}I^*.$ Then there exists $c \in R^\circ$ such that $cx^{p^e}\mfb^{\lceil tp^e\rceil} \subset I^{[p^e]} \mfb^{\lceil tp^e\rceil}$ for all $e$ large. Thus, for all $e$ large \[ cd x^{p^e} \mfa^{\lceil tp^e\rceil} \subset cx^{p^e} \mfa^{\lceil tp^e\rceil+k} \subset cx^{p^e} \mfb^{\lceil tp^e\rceil} \subset I^{[p^e]} \mfb^{\lceil tp^e\rceil} \subset I^{[p^e]} \mfa^{\lceil tp^e\rceil}, \] implying that $x \in$ $^{\mfa^t}I^*.$ This proves that $^{\mfb^t}I^* \subset$ $^{\mfa^t}I^*.$
		
		Now, let $x \in$ $^{\mfa^t}I^*.$ Then there exists $c \in R^\circ$ such that $cx^{p^e} \mfa^{\lceil tp^e\rceil} \subset I^{[p^e]} \mfa^{\lceil tp^e\rceil}$ for all $e$ large. For $e$ large, we have\[ 
		cd x^{p^e} \mfb^{\lceil tp^e\rceil} \subset cd x^{p^e} \mfa^{\lceil tp^e\rceil} \subset dI^{[p^e]}\mfa^{\lceil tp^e\rceil} \subset I^{[p^e]} \mfa^{\lceil tp^e\rceil+k} \subset I^{[p^e]} \mfb^{\lceil tp^e\rceil} 
		\] implying that $x \in$ $^{\mfb^t}I^*.$
		
We now turn to the $\mfa^t$-integral closure. Since $\mfb \subset \mfa,$ from \Cref{prop: basic properties of integral closures}(3), we know that $\overline{I}^{\mfa^t} \subset \overline{I}^{\mfb^t}.$ Let $x \in \overline{I}^{\mfb^t}.$ By definition, there exists $c \in R^\circ$ such that $cx^n \mfb^{\lceil tn\rceil} \subset I^n$ for all $n$ large. Thus for $n$ large, we have \[
		cdx^n \mfa^{\lceil tn\rceil} \subset cx^n\mfa^{\lceil tn\rceil+k} \subset cx^n\mfb^{\lceil tn\rceil} \subset I^n \] implying that $x \in \overline{I}^{\mfa^t}.$    
\end{proof}

As for the ordinary tight and integral closures, the adjusted versions agree for principal ideals.

\begin{prop}\label{prop: tight closure is integral closure for principal ideals}
Let $R$ be a ring, $\mfa\subset R$ an ideal of positive height, and $t \in \mathbb{R}_{> 0}$. Then, for any $x \in R^\circ$, $(x)^{*\mfa^t}=\overline{(x)}^{\mfa^t}$ and ${}^{\mfa^t}(x)^* = \overline{(x)}$.
\end{prop}

\begin{proof} Since the $\mfa^t$-tight closure is always contained in the $\mfa^t$-integral closure, it suffices to prove the reverse containment.  Let $y\in\overline{(x)}^{\mfa^t}$, so that there exists $c\in R^\circ$ such that for all $n\gg0,$ $cy^n\mfa^{\lceil tn\rceil} \subset (x)^n$.  Therefore, for $e\gg 0$, we may take $n=p^e$ and so
\[cy^{p^e}\mfa^{\lceil tp^e\rceil} \subset (x)^{p^e} = (x)^{[p^e]}\]
Therefore $y\in (x)^{*\mfa^t}$, and so $\overline{(x)}^{\mfa^t} \subset (x)^{*\mfa^t}$. The Vraciu version holds similarly, after recalling that the Vraciu $\mfa^t$-integral closure is the usual integral closure in this case.
\end{proof}

\subsection{Brian\c{c}on-Skoda theorems}

First shown by Brian\c{c}on-Skoda in \cite{BS74} for the ring of convergent power series in $n$ variables over the complex numbers, and then Lipman-Tessier in \cite{LT81} for pseduo-rational local rings, the Brian\c{c}on-Skoda theorem relates $\overline{I^{n+k}}$ and $I^n$. Later, Hochster-Huneke developed a tight closure version (\cite[Theorem~5.4]{HH}) which relates $\overline{I^{n+k}}$ and $(I^n)^*$ using a much simpler proof.  A similar version involving the Frobenius closure is proved in \cite[Discussion 7.6(c)]{HHSplit} or \cite[Theorem 2.2]{KZ}. Efforts have been made to uniformize the constant $k$, most notably Huneke in \cite{HunekeUniformity} who showed that, under some mild hypotheses, one can take $k$ to be uniform over all ideals $I$.  We note that the adjusted closure operations satisfy similar theorems. 

\begin{theorem}[A Brian\c{c}on-Skoda theorem for the adjusted  closure operation] \label{BS thm}
	Let $R$ be a ring, and $\mfa, I\subset R$ be ideals of positive height. Let $I$ be generated by $d$ elements and let $t \in \mathbb{R}_{> 0}$. 
	\begin{enumerate}[{\rm(a)}]
		\item Then $\overline{I^{d+m}}^{\mfa^t} \subset$ $(I^{m+1})^{*\mfa^t}$ for all $m \ge 0,$ and
				
		\item $\overline{I^{d+m}}^{\mfa^t} \subset (I^m)^{F\mfa^t},$ for all $m \geq 1.$
	\end{enumerate}
Further, suppose $(R,\mfm)$ is local of dimension $\dim R = n$ and $R/\mathfrak{m}$ is infinite.
\begin{enumerate}[{\rm(a)}]
\addtocounter{enumi}{2}
\item Then, $\overline{I^{n+m}}^{\mfa^t} \subset (I^{m+1})^{*\mfa^t}$ for all $m \ge 0$, and
\item $\overline{I^{n+m}}^{\mfa^t} \subset (I^m)^{F\mfa^t}$ for all $m \ge 1$. 
\end{enumerate}
\end{theorem}

\begin{proof}
	\begin{enumerate}[(a)]
		\item 
		 Let $x \in \overline{I^{d+m}}^{\mfa^t}.$ By definition, there exists $c \in R^\circ$ such that $cx^n\mfa^{\lceil tn \rceil} \subset I^{(d+m)n},$ for all $n$ large. In particular, for all $n=p^e$ with $e \gg 0$, from \cite[Lemma~2.1]{Tay}, we know that $I^{(d+m)p^e} \subset (I^{[p^e]})^{m+1}.$ Thus, we have $cx^{p^e} \mfa^{\lceil tp^e \rceil} \subset (I^{m+1})^{[p^e]}$ for all $e$ large so that $x \in (I^{m+1})^{*\mfa^t}$.
				
		\item 
We first assume that $R$ is reduced and thus has no embedded primes. In other words, every $c \in R^\circ$ is a nonzerodivisor. Let $x \in \overline{I^{d+m}}^{\mfa^t}.$ By definition, there exists $c \in R^\circ$ such that $cx^n\mfa^{\lceil tn \rceil} \subset I^{(d+m)n},$ for all $n$ large implying that $cx^n\mfa^{\lceil tn \rceil} \subset I^{(d+m)n} \cap (c)R$. Using the Artin-Rees lemma, there exists $n'$ such that for all $n$ large,
		\[ cx^n\mfa^{\lceil tn \rceil} \subset I^{(d+m)n} \cap (c)R \subset cI^{(d+m)n-n'}. \]	
		As $c$ is a nonzerodivisor, we get $x^n \mfa^{\lceil tn \rceil} \subset I^{(d+m)n-n'}$ for all $n$ large. As before, taking $n = p^e$, from \cite[Lemma~2.1]{Tay}, we know that $I^{(d+m)p^e-n'} \subset (I^{[p^e]})^{\lceil m-(n'/p^e) \rceil} = (I^{[p^e]})^m,$ for all $e$ large. Thus, $x^{p^e} \mfa^{\lceil tp^e \rceil} \subset (I^m)^{[p^e]}$ for all $e$ large implying that $x \in (I^m)^{F\mfa^t}.$
		
		Generally, if $R$ is not reduced, then from the previous case, we know that for all $m \geq 1,$
		\[ \overline{I^{d+m}}^{\mfa^t} = \overline{I^{d+m}}^{\mfa^t} + \sqrt{(0)} \subset (I^m)^{F\mfa^t} + \sqrt{(0)} = (I^m)^{F\mfa^t}, \]
		where the last equality follows from the observation that $\sqrt{(0)} = (0)^F \subset (I^m)^F.$  
		
		\item Let $\ell$ be the analytic spread of $I$, and we know $\ell \le n$. Then, under our hypotheses we have that $I$ has a minimal reduction $J$ generated by $\ell$ elements by \cite[Prop.~8.3.7]{hunekeSwanson}, say $J=(x_1,\ldots,x_\ell).$ Then $\overline{J^m} = \overline{I^m}$ for any $m\ge 0$. Hence, $\overline{I^{n+m}}^{\mfa^t} = \overline{J^{n+m}}^{\mfa^t}$ by \Cref{prop: may replace ideals with their ordinary closures}(c). By part (a) of this theorem, we have 
		\[\overline{J^{m+n}}^{\mfa^t} \subset (J^{m+(n-\ell)+1})^{*\mfa^t} \subset (J^{m+1})^{*\mfa^t} \subset (I^{m+1})^{*\mfa^t}\] 
		since $J^{m+1} \subset I^{m+1}$, which shows the result. The proof of (d) is nearly identical. \qedhere
	\end{enumerate}
\end{proof}

\begin{remark}
First note that when $\mfa = R,$ the above result recovers the usual Brian\c{c}on-Skoda theorem for tight closure. Further, a similar Brian\c{c}on-Skoda theorem holds for the Vraciu $\mfa^t$-integral and $\mfa^t$-tight closures, but since $^{\mfa^t}\overline{I}=\overline{I}$, the result is true simply by the usual tight closure Brian\c{c}on-Skoda; $^{\mfa^t}\overline{I^{m+n}} = \overline{I^{m+n}} \subset (I^{m+1})^* \subset$ $^{\mfa^t}(I^{m+1})^*.$
\end{remark}

\subsection{Comparisons with sharp \texorpdfstring{$F$}{F}-closure}\label{subsubsection: sharp F-closure}

In \cite{Sch}, Schwede defined the \emph{$\mfa^t$-sharp Frobenius closure} of $I$, denoted $I^{F^\sharp \mfa^t}$, to be the set of all $x\in R$ such that for all $e\gg 0$, $x^{p^e}\mfa^{\lceil t(p^e-1)\rceil}\subset I^{[p^e]}$. It is clear that the $\mfa^t$-sharp Frobenius closure is only a preclosure; $\bullet^{F^\sharp \mfa^t}$ is extensive and order-preserving by a similar proof to \Cref{prop: basics of adjusted Frobenius closure}, but taking the example $R=\mathbb{F}_p[x]$ and $I=\mfa=(x^2)$, one can easily show $(I^{F^\sharp \mfa})^{F^\sharp \mfa}$ is strictly larger than $I^{F^\sharp \mfa}$.

Schwede's definition differs from our Hara-Yoshida $\mfa^t$-Frobenius closure due to the smaller exponent on $\mfa$, as the following example shows.  Note that by definition we have that $I^{F^\sharp \mfa^t}\subset \ifat$.

\begin{ex}\label{example - sharp vs HY F-closure:label}
	Let $R=\mathbb{F}_p[x,y]$, $I=(x,y)$, $\mfa=(x,y)$, and $t=2$.  We claim that $1\in \ifat \setminus I^{F^\sharp \mfa^t}$.  First, note that for any $e\gg 0$, $1\mfa^{\lceil tp^e\rceil} = (x,y)^{2p^e}\subset (x^{p^e},y^{p^e}) = I^{[p^e]}$, so $1\in \ifat$.  On the other hand, $1\mfa^{\lceil t(p^e-1)\rceil} = (x,y)^{2(p^e-1)}$ contains $x^{p^e-1}y^{p^e-1}\notin I^{[p^e]}$.  Therefore $1\notin I^{F^\sharp \mfa^t}$.
\end{ex}

\Cref{example - sharp vs HY F-closure:label} shows that a small adjustment in the exponent of $\mfa$ in the definition of the adjusted Frobenius closure can give a different operation on ideals.  The following example shows that adjusting the exponent of $\mfa$ in the definition of the Vraciu Frobenius closure also provides a different operation on ideals.

\begin{ex}
	Let $R=\mathbb{F}_p[x,y]$, $I=(x^2,y^2)$, $\mfa=(x,y)$, and $t=2$. First, note that $\atif$ contains the set $\left\{x\in R\mid \forall e\gg 0, x^{p^e}\mfa^{\lceil t(p^e-1)\rceil}\subset I^{[p^e]}\mfa^{\lceil t(p^e-1)\rceil}\right\}$. 
	
	 Now, for any $e\geq 0$, the ideal $(xy)^{p^e}\mfa^{2p^e}$ is generated by monomials of the form $x^{p^e + i}y^{3p^e-i}$ for $0\leq i\leq 2p^e$.  If $i<p^e$, then $x^{p^e + i}y^{3p^e-i} =y^{2p^e}x^{p^e + i}y^{p^e-i}\subset I^{[p^e]}\mfa^{2p^e}$, and if $i\geq p^e$ then $x^{p^e + i}y^{3p^e-i} =x^{2p^e}x^{i - p^e}y^{3p^e-i}\subset I^{[p^e]}\mfa^{2p^e}$.  Thus $(xy)^{p^e}\mfa^{2p^e}\subset I^{[p^e]}\mfa^{2p^e}$, and so $xy\in \atif$.  On the other hand, it is not the case that $(xy)^{p^e}\mfa^{2(p^e-1)} \subset  I^{[p^e]}\mfa^{2(p^e-1)}$.  The former set contains the monomial $(xy)^{p^e}x^{p^e-1}y^{p^e-1} = x^{2p^e-1}y^{2p^e-1}$, which is not even contained in $I^{[p^e]}$.  Therefore, 
\[ \left\{x\in R\mid \forall e\gg 0, x^{p^e}\mfa^{\lceil t(p^e-1)\rceil}\subset I^{[p^e]}\mfa^{\lceil t(p^e-1)\rceil}\right\} \subsetneq \atif.
\]
\end{ex}

By contrast, the other adjusted closure operations, by virtue of including the multiplier $c\in R^\circ$ in their definitions, are stable under modification of the exponent on $\mfa$.

\begin{theorem}\label{lem: multiplier closures can have linear shift in exponent} Let $u\in \R$.
If $R$ is a ring, $I, \mfa$ are ideals of positive height, and $t \in \R_{>0}$, then 
\begin{enumerate}[{\rm(a)}]
 	\item $\overline{I}^{\mfa^t} = \left\{x\in R\mid \text{ for some } c\in R^\circ\text{ and all }n\gg 0, cx^n\mfa^{\lceil tn + u\rceil}\subset I^n\right\}$.
 	\item $\istarat = \left\{x\in R\mid \text{ for some } c\in R^\circ\text{ and all }e\gg 0, cx^{p^e}\mfa^{\lceil tp^e + u\rceil}\subset I^{[p^e]}\right\}$.
 	\item $\atistar = \left\{x\in R\mid \text{ for some } c\in R^\circ\text{ and all }e\gg 0, cx^{p^e}\mfa^{\lceil tp^e + u\rceil}\subset I^{[p^e]}\mfa^{\lceil tp^e + u\rceil}\right\}$.
 \end{enumerate}   	
\end{theorem}

\begin{proof}
(a) Let $K_u= \left\{x\in R\mid \text{ for some } c\in R^\circ\text{ and all }n\gg 0, cx^n\mfa^{\lceil tn + u\rceil}\subset I^n\right\}$.  We wish to show that $K_u = \overline{I}^{\mfa^t}$.

Suppose $u>0$, so that $\mfa^{\lceil tn + u\rceil}\subset \mfa^{\lceil tn\rceil}$ and hence $\overline{I}^{\mfa^t}\subset K_u$.  Now suppose that $x\in K_u$, let $c'\in R^\circ$ such that $c'x^n\mfa^{\lceil tn + u\rceil}\subset I^n$ for $n\gg 0$, and let $c''\in \mfa^{\lceil u\rceil}\cap R^\circ$.  Now, for $n\gg 0$,
\[c'c''x^n\mfa^{\lceil tn\rceil} \subset c'x^n\mfa^{\lceil tn \rceil+ \lceil u\rceil}\subset c'x^n\mfa^{\lceil tn + u\rceil}\subset I^n,\]
and so $x\in \overline{I}^{\mfa^t}$.  

On the other hand, if $u < 0$, then \emph{a priori} $K_u \subset \overline{I}^{\mfa^t}$, so let $x\in \overline{I}^{\mfa^t}$, with a choice of $c'\in R^\circ$ such that $c'x^n\mfa^{\lceil tn\rceil} \subset I^n$ for all $n\gg 0$.  Choose $c''\in \mfa^{\lceil -u\rceil}\cap R^\circ$, and note that for $n\gg 0$, 
\[c'c''x^n\mfa^{\lceil tn + u\rceil} \subset c'x^n\mfa^{\lceil tn + u \rceil+ \lceil -u\rceil} \subset c'x^n\mfa^{\lceil tn\rceil}\subset I^n,\]
hence $x\in K_u$.

(b) Replacing $I^n$ with $I^{[p^e]}$ in the previous two paragraphs shows that the same result holds for Hara-Yoshida tight closure.

(c) Let $L_u= \left\{x\in R\mid \text{ for some } c\in R^\circ\text{ and all }e\gg 0, cx^{p^e}\mfa^{\lceil tp^e + u\rceil}\subset I^{[p^e]}\mfa^{\lceil tp^e + u\rceil}\right\}$.  We wish to show that $L_u = \atistar$.

First suppose $u>0$.  Since for all $e\geq 0$, $(I^{[p^e]}\mfa^{\lceil tp^e\rceil}:\mfa^{\lceil tp^e\rceil}) \subset (I^{[p^e]}\mfa^{\lceil tp^e+u\rceil}:\mfa^{\lceil tp^e+u\rceil})$, we have that $\atistar\subset L_u$.  Suppose that $x\in L_u$ and choose $c'\in R^\circ$ such that $c'x^{p^e}\mfa^{\lceil tp^e + u\rceil}\subset  I^{[p^e]}\mfa^{\lceil tp^e + u\rceil}$ for all $e\gg 0$.  Choose $c''\in \mfa^{\lceil u\rceil}\cap R^\circ$ and note that
\[c'c''x^{p^e}\mfa^{\lceil tp^e\rceil} \subset c'x^{p^e}\mfa^{\lceil tp^e + u\rceil}\subset  I^{[p^e]}\mfa^{\lceil tp^e + u\rceil} \subset I^{[p^e]}\mfa^{\lceil tp^e\rceil},\]
and so $x\in \atistar$.

If $u<0$, then \emph{a priori} $L_u\subset\atistar$.  Let $x\in \atistar$, let $c'\in R^\circ$ such that for $e\gg 0$, $c'x^{p^e}\mfa^{\lceil tp^e\rceil}\subset  I^{[p^e]}\mfa^{\lceil tp^e\rceil}$.  Choose $c''\in\mfa^{\lceil -u\rceil}\cap R^\circ$, and note that
\[c'c''x^{p^e}\mfa^{\lceil tp^e+u\rceil} \subset c'x^n\mfa^{\lceil tp^e + u \rceil+ \lceil -u\rceil} \subset c'x^{p^e}\mfa^{\lceil tp^e\rceil}\subset I^{[p^e]}\mfa^{\lceil tp^e\rceil} \subset  I^{[p^e]}\mfa^{\lceil tp^e+u\rceil},
\]
hence $x\in L_u$.
\end{proof}

We also note that we have a similar Brian\c{c}on-Skoda type containment between the $\mfa^t$-integral closure and sharp $F$-closure.

\begin{prop}\label{prop: BS for sharp F-closure} Let $R$ be a ring and $\mfa, I\subset R$ be ideals of positive height.  Let $I$ be generated by $d$ elements and let $t\in \R_{>0}$.  Then $\overline{I^{d+m}}^{\mfa^t}\subset (I^m)^{F^\sharp \mfa^t}$ for all $m\geq 1$.
\end{prop}

\begin{proof}
	Let $x\in \overline{I^{d+m}}^{\mfa^t}$.	As in the proof of \Cref{BS thm}(b), we may first assume that $R$ is reduced and that there exists $n'$ such that for all $n\gg 0$, $x^n\mfa^{\lceil tn\rceil}\subset I^{(d+m)n-n'}$. Therefore, for $e\gg 0$,
	\[x^{p^e}\mfa^{\lceil t(p^e-1)\rceil} \subset x^{p^e-1}\mfa^{\lceil t(p^e-1)\rceil}\subset  I^{(d+m)(p^e-1)-n'} \subset (I^{[p^e]})^{\lceil m - (d + m + n')/p^e\rceil} = (I^{[p^e]})^m = (I^m)^{[p^e]}.\]
	Therefore $x\in(I^m)^{F^\sharp \mfa^t}$. Generally, if $R$ is not reduced, then from the previous case, we know that for all $m \geq 1,$
	\[ \overline{I^{d+m}}^{\mfa^t} = \overline{I^{d+m}}^{\mfa^t} + \sqrt{(0)} \subset (I^m)^{F^\sharp\mfa^t} + \sqrt{(0)} = (I^m)^{F^\sharp\mfa^t}, \]
	where the last equality follows from the observation that $\sqrt{(0)} = (0)^F \subset (I^m)^{F^\sharp\mfa^t}.$ 
\end{proof}

\subsection{Adjusted closures in graded rings}\label{subsubsection: adjusted closures in graded rings}

\newcommand{\mb}[1]{\mathbf{#1}}
\newcommand{\supp}{\operatorname{Supp}}
To conclude this section, we say a few words about these adjusted closure operations in a graded setting and prove a graded version of the Brian\c{c}on-Skoda Theorem. First, we note that in $\Z^N$-graded rings, the adjusted closure operations applied to homogeneous ideals return homogeneous ideals. If $R$ is $\Z^N$-graded, for an $x=\sum_{\mb j \in \Z^N} x_{\mb j}$ in $R$, write $\supp x = \{ \mb j \in \Z^N \mid x_{\mb j} \neq 0\}$. Furthermore, we will endow $\Z^N$ with a total ordering $\le$ which respects addition, e.g. the lexicographic order. The key idea of the proof below is to use that the equations defining membership in the adjusted closures tend to isolate elements of maximal graded degree with respect to $\le$.

\begin{proposition}\label{prop: adjusted closures are homogeneous}
Suppose $R = \oplus_{\mathbf{m} \in \Z^N} R_{\mathbf{m}}$ is a $\Z^N$-graded ring, $I$ and $\mfa$ are homogeneous ideals of positive height in $R$, and $t \in \R_{>0}$. Then, the $\mfa^t$-integral closure $\overline{I}^{\mfa^t}$ is also a homogeneous ideal. If $R$ is of prime characteristic $p>0$, then each of the adjusted closures $I^{F\mfa^t}$, $I^{*\mfa^t}$, ${}^FI^{\mfa^t}$, and ${}^*I^{\mfa^t}$ are homogeneous ideals as well.
\end{proposition}

\begin{proof}
Define $I_n = I^n : \mfa^{\lceil tn \rceil}$, $J_e = I^{\fbp{p^e}}:\mfa^{\lceil t p^e \rceil}$, and $J_e' = I^{\fbp{p^e}}\mfa^{\lceil t p^e \rceil}:\mfa^{\lceil t p^e \rceil}$. Since products and colons of homogeneous ideals are homogeneous, we have that $I_n$, $J_e$, and $J_e'$ are  homogeneous ideals for all $n,e\in \mbn$.

We first handle the adjusted integral closure. Let $x \in R$ be an element of $\overline{I}^{\mfa^t}$ so that there is a $c \in R^\circ$ such that $cx^n \in I_n$ for all $n$. By \Cref{prop: hy integral closure reduces to domains}, we can assume without loss of generality that $R$ is a domain. Our goal is that $x_{\mb j} \in \overline{I}^{\mfa^t}$ for all $\mb j \in \supp x$. We will induce on the size of $\supp x$. If $x$ itself is homogeneous, then there's nothing to show. Now, suppose that if $|\supp y| = m$ and $cy^n \in I_n$ for all $n$, then each homogeneous component of $y$ is in $\overline{I}^{\mfa^t}$, and now suppose $x$ has $|\supp x| = m+1$. 

Let $\mb j_0$ be the maximum element of $\supp x$ under $\le$ and let $\mb i_0$ be the maximal element of $\supp c$ under $\le$. Then, an arbitrary homogeneous term of the multinomial expansion (suppressing the multinomial coefficient) of $c(\sum_{\mb j\in \Z^N} x_{\mb j})^n$, has the form $c_{\mb i}x_{\mb j_0}^{e_0}x_{\mb j_1}^{e_1}\cdots x_{\mb j_m}^{e_m}$ where $\sum_k e_k = n$. This element has degree \[\mb i + \sum_k e_k \mb j_k \le \mb i_0 + \sum_k e_k \mb j_0 = \mb i_0 + n\mb j_0\] with equality if and only if $\mb i = \mb i_0$ and $e_k = 0$ for $k > 0$. Hence, $c_{\mb i_0}x_{\mb j_0}^n$ is the only homogeneous term of $cx^n$ of graded degree $\mb i_0 + n \mb j_0$. Thus, as $I_n$ is homogeneous, $c_{\mb i_0}x_{\mb j_0}^n \in I_n$ for all $n$. Hence $x_{\mb j_0} \in \overline{I}^{\mfa^t}$, and so $x-x_{\mb j_0} \in \overline{I}^{\mfa^t}$ as well. By the induction hypothesis, each homogeneous component of $x-x_{\mb j_0}$ is in $\overline{I}^{\mfa^t}$, so each homogeneous component of $x$ is in $\overline{I}^{\mfa^t}$, concluding the proof.

Now we suppose that $R$ has prime characteristic $p>0$, and we discontinue the supposition that $R$ is a domain since the Frobenius closures cannot be computed by passing to quotient domains.

As before, fix $x = \sum_{\mb j\in \Z^N} x_{\mb j}$ and $c =\sum_{\mb i \in \Z^N} c_{\mb i} \in R^\circ$. Write $\mb j_0$ for the maximal element of $\supp x$ under $\le$. Then, for $\mb j \neq \mb j_0$, we have that the support of $cx_{\mb j}^{p^e}$ has maximum $\mb i_0+p^e\mb j$, which for $e \gg 0$ is less than $\mb i + p^e \mb j_0$ for all $\mb i \in \supp c$. Hence, if $cx^{p^e} \in J_e$, then $cx_{\mb j_0}^{p^e} \in J_e$ as $J_e$ is homogeneous. 

If $c=1$, this argument demonstrates that $x \in I^{F\mfa^t}$ implies $x_{\mb j_0} \in I^{F\mfa^t}$. If $c$ is simply an arbitrary element of $R^\circ$, this argument demonstrates that $x \in I^{*\mfa^t}$ implies $x_{\mb j_0} \in I^{*\mfa^t}$. Similar reasoning for membership in $J_e'$ holds, showing if $x \in {}^{\mfa^t}I^*$ (respectively $x \in {}^{\mfa^t}I^F$) then $x_{\mb j_0} \in {}^{\mfa^t}I^*$ (respectively $x_{\mb j_0} \in {}^{\mfa^t}I^F$). Then, applying an induction argument on the size of $\supp x$ as before, we see that $I^{*\mfa^t}$, $I^{F\mfa^t}$, ${}^{\mfa^t}I^*$, and ${}^{\mfa^t}I^F$ are homogeneous. 
\end{proof}

We now switch to the more common setting of an $\mbn$-graded ring. 

\begin{setting}\label{setting: graded setup}
Let $R = \oplus_{n \geq 0} R_n$ be a reduced, finitely generated graded algebra over a field $R_0=\mathbb{F}$, and further let $R$ be of positive Krull dimension. Write $R_+ = \oplus_{n > 0} R_n$ for the unique maximal homogeneous ideal of $R,$ and let $y_1, \ldots, y_s$ be algebra generators for $R$ over $R_0$, with degrees $\beta_1, \ldots, \beta_s$ respectively. Set $\beta = \max(\beta_i)$ and $\beta' = \min(\beta_i).$ Finally, we let $\mfa$ be a homogeneous $R_+$-primary ideal, so that $R^k_+ \subset \mfa \subset R^l_+$ for some integers $l \leq k.$
\end{setting}

\begin{proposition}\label{at high degrees, HY and V correspond}
Given \Cref{setting: graded setup}, let $I = (f_1, \ldots, f_d)$ be a homogeneous ideal, $t\in \R_{>0}$, and $x \in R_{\geq N}$ with $N = (\beta k - \beta'l)t + \max(\deg(f_i)).$ 

\begin{enumerate}[{\rm(a)}]
\item Then, $x \in \overline{I}^{\mfa^t}$ if and only if $x \in$ $^{\mfa^t}\overline{I} = \overline{I}$, and 

\item If $t\in \Z[1/p]$ or $\deg(x) > N$, then $x \in I^{F\mfa^t}$ if and only if $x \in$ $^{\mfa^t}I^F.$
\end{enumerate}
\end{proposition}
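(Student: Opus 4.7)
The plan is to prove both parts by a single homogeneous degree-counting argument, since only the reverse containments require work: the inclusions $\overline{I} = {}^{\mfa}\overline{I} \subset \overline{I}^{\mfa}$ and ${}^{\mfa}I^F \subset I^{F\mfa}$ are already recorded in Propositions~\ref{prop: basic properties of integral closures}(b) and \ref{prop: basics of adjusted Frobenius closure}(b). So the task reduces to showing that an $x \in R_{\geq N}$ lying in $\overline{I}^{\mfa}$ (respectively $I^{F\mfa}$) actually lies in $\overline{I}$ (respectively ${}^{\mfa}I^F$). Throughout I write $M = \max(\deg f_i)$, so that $N = \beta k - \beta' l + M$.

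For part (a), suppose $cx^n\mfa^n \subset I^n$ for $n \gg 0$ with some $c \in R^\circ$. First I would arrange $c$ and $x$ to be homogeneous: since the minimal primes of the reduced graded ring $R$ are homogeneous, graded prime avoidance produces a homogeneous element of $R^\circ$ by which to multiply $c$, and the claim splits over the homogeneous components of $x$. The key is then a degree computation. For any homogeneous generator $m$ of $\mfa^n$, the inclusion $\mfa \subset R_+^l$ forces $\deg m \geq \beta' l n$, and since the homogeneous element $cx^n m$ lies in $I^n$ we may expand $cx^n m = \sum_\alpha r_\alpha f^\alpha$ with each $f^\alpha$ an $n$-fold product of generators of $I$ and each $r_\alpha$ homogeneous. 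Matching degrees and using $\deg f^\alpha \leq nM$ together with $N - M = \beta k - \beta' l$ gives
\[
\deg r_\alpha \;=\; \deg c + n\deg x + \deg m - \deg f^\alpha \;\geq\; \deg c + \beta k n.
\]
Any monomial in the algebra generators $y_1,\dots,y_s$ of total degree $\geq \beta k n$ must use at least $kn$ factors (each factor contributes degree at most $\beta$), so $r_\alpha \in R_+^{kn} \subset \mfa^n$. Running $m$ over generators of $\mfa^n$ gives $cx^n\mfa^n \subset \mfa^n I^n$, whence $x \in {}^{\mfa}\overline{I} = \overline{I}$.

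Part (b) runs identically with $n$ replaced by $p^e$ and $I^n$ replaced by $I^{[p^e]}$: a homogeneous generator $m$ of $\mfa^{p^e}$ satisfies $\deg m \geq \beta' l p^e$, and expanding each homogeneous component of $x^{p^e}m \in I^{[p^e]}$ as $\sum_j r_j f_j^{p^e}$ with homogeneous $r_j$ yields $\deg r_j \geq \beta k p^e$; hence $r_j \in R_+^{kp^e} \subset \mfa^{p^e}$, hence $x^{p^e}\mfa^{p^e} \subset \mfa^{p^e} I^{[p^e]}$, placing $x$ in ${}^{\mfa}I^F$. The computation is essentially forced once the degree bound is set up correctly: the choice $N = \beta k - \beta' l + M$ is exactly calibrated to make the cancellation $(N - M) + \beta' l = \beta k$ happen, which is precisely what lands the coefficients $r_\alpha$ inside $\mfa^n$ (or $\mfa^{p^e}$). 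The only real bookkeeping is the reduction to homogeneous $c$ and $x$, which is standard in the graded reduced setting; there is no substantive obstacle.
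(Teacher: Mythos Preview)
Your proposal is correct and follows essentially the same approach as the paper's own proof: the key inequality $\deg r_\alpha \geq \beta k n$ obtained from $N = \beta k - \beta' l + M$, and the containment $R_{\geq \beta kn}\subset R_+^{kn}\subset \mfa^n$, are exactly what the paper uses. Your version is in fact a bit more careful than the paper in explicitly reducing to homogeneous $c$ and $x$ (via graded prime avoidance and splitting over components), whereas the paper writes $\deg(c)$ without comment; otherwise the arguments are identical.
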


\begin{proof} (a) Assume that $x \in \overline{I}^{\mfa^t}$ and $\deg(x) \geq N.$ By definition, there exists $c \in R^\circ$ such that $cx^n\mfa^{\lceil tn\rceil} \subset I^n$ for all $n$ large.  Without loss of generality we may assume that $\deg(c) \geq \beta k - \beta' l$. Fix $n$ and let $h \in \mfa^{\lceil tn\rceil}$ be a homogeneous element. Then $\deg(h) \geq \beta'l\lceil tn\rceil.$ Write $cx^nh = \sum_{|\vec m|=n} b_{\vec m} f_1^{m_1} \cdots f_d^{m_d}$ where $b_i \in R$ are homogeneous elements, so that 
	\begin{align*}
	 	\deg(b_{\vec m}) & \geq \deg(c) + n \deg(x) + \deg(h) - n\max(\deg(f_i)) \\ 
	 	 & \geq \deg(c) + \beta k tn +\beta' l (\lceil tn\rceil - tn)\\ 
	 	 & = \deg(c) + \beta k \lceil tn\rceil -(\beta k - \beta' l) (\lceil tn\rceil - tn)\\ 
	 	 &\geq \beta k \lceil tn\rceil 
	 \end{align*}  
	for each $\vec m.$ This implies that $b_{\vec m} \in R_{\geq \beta k\lceil t  n\rceil} \subset R_+^{k\lceil tn\rceil} \subset \mfa^{\lceil tn\rceil},$ hence the result.

	(b) Suppose that $x\in \ifat$, so that for all $e\gg 0$, $x^{p^e}\mfa^{\lceil tp^e\rceil} \subset I^{[p^e]}$.  Let $\delta = \deg(x) - N$.
	Fix $e$ and let  $h \in \mfa^{\lceil tp^e\rceil}$ be a homogeneous element.  Then $\deg(h) \geq \beta'l\lceil tp^e\rceil.$ Write $x^{p^e}h = \sum_{i = 1}^d b_i f_i^{p^{e}}$ where $b_i \in R$ are homogeneous elements, so that
	\begin{align*}
		\deg(b_i) &= p^e\deg(x) + \deg(h) - p^e\deg(f_i) \\ 
		& \geq p^e\left((\beta k - \beta'l)t + \max(\deg(f_i)) + \delta\right) + \beta'l\lceil tp^e\rceil - p^e\max(\deg(f_i)) \\ 
		& =(\beta k - \beta'l)tp^e + \delta p^e + \beta'l\lceil tp^e\rceil \\ 
		& = \beta k\lceil tp^e\rceil - (\beta k -\beta'l)(\lceil tp^e \rceil - tp^e) + \delta p^e.
	\end{align*}
	If $t\in \Z[1/p]$ and $e$ is large enough that $tp^e\in \Z$, then  
	\[\beta k\lceil tp^e\rceil - (\beta k -\beta'l)(\lceil tp^e \rceil - tp^e) + \delta p^e = \beta k\lceil tp^e\rceil + \delta p^e\geq \beta k\lceil tp^e\rceil.\]
	On the other hand, if $t\notin \Z[1/p]$, then $\delta >0$, and for all $e$ large enough that $\delta p^e \geq \beta k -\beta'l$,
	\[
		\beta k\lceil tp^e\rceil - (\beta k -\beta'l)(\lceil tp^e \rceil - tp^e) + \delta p^e > \beta k\lceil tp^e\rceil - (\beta k -\beta'l) + (\beta k -\beta'l) = \beta k\lceil tp^e\rceil.\]
		In either case, $\deg(b_i)\geq \beta k \lceil tp^e\rceil$, and so $b_i\in \mfa^{\lceil tp^e\rceil}$, showing that $x\in \atif$.
		\end{proof}

The Brian\c con-Skoda Theorem for graded rings and ideals generated by homogeneous system of parameters was proved in \cite[Theorem~7.7]{HHSplit} (and in \cite[Proposition~3.3]{Smith97} for homogeneous $R_+$-primary ideals). We prove a similar result for the Hara-Yoshida $\mfa$-tight closure. Since these ideals are bigger, as expected, more graded components of the ring are contained in the adjusted closure as compared to the ordinary tight closure. 

\begin{proposition}[Graded Brian\c con-Skoda Theorem for adjusted closures] \label{prop: graded briancon-skoda}
Given \Cref{setting: graded setup}, suppose $\operatorname{char} \mathbb{F} = p>0$ and $R$ is locally equidimensional. Let $I = (f_1, \ldots, f_d)$ be such that $f_1,\ldots,f_d$ are homogeneous system of parameters, $t\in \R_{>0}$, and $N = \sum_{i=1}^{d} \deg f_i  -t\beta'l.$

Suppose $R$ has a parameter test element\footnote{Recall an element $c \in R^\circ$ is a parameter test element if, for all $e \ge 0$ and all  ideals $\mathfrak{q}$ generated by a system of parameters of $R$, $c(\mathfrak{q}^*)^{\fbp{p^e}} \subset \mathfrak{q}^{\fbp{p^e}}$.}, e.g.\ $\mathbb{F}$ is $F$-finite. Let $x \in R_{\geq N}.$ Then $x \in I^{*\mfa^t}.$ In other words, $R_{\geq N} + I^{F\mfa^t} \subset I^{*\mfa^t}.$  
\end{proposition}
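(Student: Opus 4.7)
The strategy is to combine a parameter test element with the graded Brian\c{c}on--Skoda theorem applied to every Frobenius power of $I$, controlling the degree of the products $x^{p^e}h$ with $h\in\mfa^{p^e}$ so that each lands in a graded piece high enough for the test element to push it into $I^{[p^e]}$. The statement involves no parameter $t$, so we are working with $t=1$ throughout.

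First I would invoke the graded Brian\c{c}on--Skoda theorem for parameter ideals in the locally equidimensional graded setting (e.g.\ \cite[Theorem~7.7]{HHSplit} or \cite[Proposition~3.3]{Smith97}). For each $e\geq 0$ the sequence $f_1^{p^e},\ldots,f_d^{p^e}$ is again a homogeneous system of parameters, of total degree $p^e\sum_i\deg f_i$, and that theorem yields
\[R_{\geq p^e\sum_i\deg f_i}\;\subset\;\bigl(I^{[p^e]}\bigr)^*.\]
Since a single parameter test element $c\in R^\circ$ is available by hypothesis, applying its defining property (with the inner Frobenius exponent set to $0$) to the parameter ideal $I^{[p^e]}$ gives
\[c\cdot R_{\geq p^e\sum_i\deg f_i}\;\subset\;I^{[p^e]}\qquad\text{for every }e\geq 0.\]
On the other side, $\mfa\subset R_+^l$ forces every homogeneous element of $\mfa$ to have degree at least $\beta' l$, so every homogeneous generator of $\mfa^{p^e}$ has degree at least $\beta' l p^e$.

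Now fix $x\in R_{\geq N}$. Using additivity of the Frobenius in characteristic $p$, it suffices to treat each homogeneous component separately, so I may assume $x$ is homogeneous of degree $d\geq N=\sum_i\deg f_i-\beta' l$. For any homogeneous $h\in\mfa^{p^e}$,
\[\deg(x^{p^e}h)\;\geq\;dp^e+\beta' l p^e\;\geq\;p^e\sum_i\deg f_i,\]
so the previous step gives $cx^{p^e}h\in I^{[p^e]}$. As $\mfa^{p^e}$ is generated by homogeneous elements, this yields $cx^{p^e}\mfa^{p^e}\subset I^{[p^e]}$ for every $e$, i.e.\ $x\in I^{*\mfa}$. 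Finally, the inclusion $I^{F\mfa}\subset I^{*\mfa}$ (obtained by taking $c=1$ in the definitions, as noted just after \Cref{setting: graded setup}) promotes this to $R_{\geq N}+I^{F\mfa}\subset I^{*\mfa}$.

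The main obstacle is securing a single parameter test element $c$ that simultaneously witnesses tight closure of every parameter ideal $I^{[p^e]}$; this is exactly what the parameter test element hypothesis provides, so once invoked the remainder is a bookkeeping check on degrees of homogeneous products. Without this uniformity one would only obtain an $e$-dependent multiplier, which is not enough to conclude membership in $I^{*\mfa}$.
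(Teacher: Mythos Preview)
Your argument is correct. The degree count is right: homogeneous elements of $\mfa^{p^e}\subset R_+^{lp^e}$ have degree at least $\beta' l p^e$, so $x^{p^e}h$ lands in $R_{\geq p^e\sum_i\deg f_i}$, which the classical graded Brian\c{c}on--Skoda theorem (applied to the homogeneous system of parameters $f_1^{p^e},\ldots,f_d^{p^e}$) places inside $(I^{[p^e]})^*$; the parameter test element then uniformly pushes this into $I^{[p^e]}$. One small remark: the reduction to homogeneous $x$ is justified not by ``additivity of Frobenius'' but simply because $I^{*\mfa}$ is an ideal, hence an $R_0$-submodule closed under sums of homogeneous pieces; also, you have reused the letter $d$ for both $\dim R$ and $\deg x$.

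The paper takes a different route. It first treats the case where all $f_i$ have the same degree $\alpha$: for each homogeneous $v\in\mfa$ it shows directly, via an integral equation over the Noether normalization $\mathbb{F}[f_1,\ldots,f_d]$, that $xv\in\overline{I^d}$, and then invokes the adjusted Brian\c{c}on--Skoda theorem (\Cref{BS thm}) to conclude $x\in\overline{(I^d)}^{\mfa}\subset I^{*\mfa}$. The unequal-degree case is then reduced to the equal-degree case by raising the $f_i$ to suitable powers and applying colon capturing together with the parameter test element. Your approach is shorter and avoids the case split, at the cost of invoking the classical graded result as a black box and using the parameter test element throughout; the paper's equal-degree argument, by contrast, does not need the parameter test element at all and showcases the adjusted Brian\c{c}on--Skoda theorem proved earlier.
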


\begin{proof}
	By \Cref{prop: cl via min primes}, we may assume that $R$ is a domain.
	We first prove the case in which $\deg f_i = \deg f_1 = \alpha$ for all $1 \leq i \leq d$, so that $N = d\alpha - t\beta'\ell$. Pick $0\neq v \in \mfa$ a homogeneous element.  For any $r\in\N$, since $R$ is module-finite over $S = \mathbb{F}[f_1,\ldots,f_d],$ the monomial $x^{r} v^{\lfloor rt\rfloor}$ satisfies an integral equation $(x^{r} v^{\lfloor rt\rfloor})^m + b_1(x^{r} v^{\lfloor rt\rfloor})^{m-1} + \cdots + b_m = 0$ where $b_i \in S$ for all $i.$ We may assume that $b_i$ are homogeneous elements. So for any $b_i\neq 0$, 
	\[\deg b_i = i \cdot \deg(x^{r} v^{\lfloor rt\rfloor}) \geq  i\cdot \left(r N + \lfloor rt\rfloor\beta'\ell\right) = i \cdot\left(  rd\alpha - rt\beta'\ell + \lfloor rt\rfloor\beta'\ell\right).\]
	We will fix $r$ such that $rt\beta'\ell - \lfloor rt\rfloor\beta'\ell < 1$, so that the inequality above will imply that $\deg b_i\geq i\cdot rd\alpha$.  If $t$ is rational, then take $r$ such that $rt\in \Z$, so that $\lfloor rt\rfloor = rt$.  Otherwise, if $t$ is irrational, then we may take $r\in \N$ such that $rt - \lfloor rt\rfloor < 1/(\beta'\ell)$. Hence 
	\[ b_i \in R_{i\cdot \deg(x^rv^{\lfloor rt\rfloor})} \cap S \subset (f_1,\ldots,f_d)^{i\cdot \deg(x^rv^{\lfloor rt\rfloor})/\alpha} \subset (f_1,\ldots,f_d)^{i\cdot rd}.\] 
	This implies that $x^rv^{\lfloor rt\rfloor} \in \overline{I^{rd}}.$ Thus there exists $c \in R^\circ$ such that $c(x^rv^{\lfloor rt\rfloor})^{n} \in I^{nrd}$ for all $n \geq 0.$

	We claim that $x \in \overline{(I^d)}^{\mfa^t}.$ Let $\mfa = (v_1,\ldots,v_\mu).$ For $1 \leq i \leq \mu,$ we know that there exists $c_i \in R^{\circ}$ and $r_i\in \N$ such that $c_i(x^{r_i}v^{\lfloor r_it\rfloor})^{n} \in I^{nr_id}$ for all $n$. Fix $n_1,\ldots, n_\mu\in\N$ such that $n_1+\cdots+n_{\mu}=\lceil tn\rceil,$ i.e.\ $ v_1^{n_1} \cdots v_\mu^{n_\mu}$ is a generator of $\mfa^{\lceil  tn\rceil}$. For each $i=1,\ldots,\mu$, let $q_i = \lfloor n\cdot n_i/\lceil tn\rceil\rfloor$.  Note that $\sum_i q_i \leq \sum_i n\cdot n_i/\lceil tn\rceil =n$.  
	Also, note that $n_i/(r_it) = nn_i/(tnr_i) \geq nn_i/(\lceil tn\rceil r_i)\geq q_i/r_i$.

	Now we have that for any $i$,
	\[c_ix^{q_i}v_i^{n_i}
\in c_i\left(x^{r_i\lfloor q_i/r_i\rfloor}v_i^{\lfloor r_it\rfloor \lfloor n_i/(r_it)\rfloor}\right)
\subset c_i\left(x^{r_i\lfloor q_i/r_i\rfloor}v_i^{\lfloor r_it\rfloor \lfloor q_i/r_i\rfloor}\right)
\subset c_i\left(x^{r_i} v_i^{\lfloor r_it\rfloor }\right)^{\lfloor q_i/r_i\rfloor}
\subset I^{\lfloor q_i/r_i\rfloor r_id}.
	\]
	Therefore,
	\[ \left( \prod_{i=1}^\mu c_i\right) x^n v_1^{n_1} \cdots v_\mu^{n_\mu} \in 
	\prod_{i=1}^\mu \big( c_ix^{q_i} v_i^{n_i} \big) \subset 
	\prod_{i=1}^\mu I^{\lfloor q_i/r_i\rfloor r_id} = I^{d\sum_i\lfloor q_i/r_i \rfloor r_i}. \]
	Now 
	\[\sum_{i=1}^\mu\left\lfloor \dfrac{q_i}{r_i}\right\rfloor r_i > \sum_{i=1}^\mu \left(\dfrac{1}{r_i}\left\lfloor \dfrac{n \cdot n_i}{\lceil tn\rceil}\right\rfloor - 1 \right)r_i
	>\sum_{i=1}^\mu\left(\dfrac{n\cdot n_i}{\lceil tn\rceil}-1-r_i\right)= n - \sum_{i=1}^\mu (r_i+1),\] hence $d\sum_i\lfloor q_i/r_i\rfloor r_i > nd - d\sum_i(r_i+1)$.  Therefore, taking $0\neq c'\in I^{d\sum_i(r_i+1)}$, we have that for all $n\geq 1$,
	\[c'\left( \prod_{i=1}^\mu c_i\right)x^n\mfa^{\lceil tn\rceil}\subset I^{nd}\]
	implying that $x \in \overline{(I^d)}^{\mfa^t} \subset I^{*\mfa^t}$ using \Cref{BS thm}. Note that for this case we do not require the existence of a parameter test element or equidimensionality. 
		
	Now suppose that not all $f_i$'s have the same degree. Choose nonnegative integers $\alpha_i$ such that $F_1 = f_1^{\alpha_1+1},\ldots, F_d = f_d^{\alpha_d+1}$ have equal degree $D$. Let $x \in R_{\geq N}$. Then $xf_1^{\alpha_1} \cdots f_d^{\alpha_d}$ has degree at least 
	\[\sum_{i=1}^{d} \deg f_i -t\beta'l + \sum_{i=1}^d {\alpha_i\deg(f_i)}
	= dD -t\beta'l.\]
Hence, by the first case, $xf_1^{\alpha_1} \cdots f_d^{\alpha_d}\in (F_1,\ldots,F_d)^{*\mfa^t}.$ By definition, there exists $c \in R^\circ$ such that for all $e$ large,
	\[ c(xf_1^{\alpha_1} \cdots f_d^{\alpha_d})^{p^e} \mfa^{\lceil tp^e\rceil} \subset (f_1^{(\alpha_1+1)p^e}, \ldots, f_d^{(\alpha_d+1)p^e}). \]
	This implies that 
	\[ cx^{p^e} \mfa^{\lceil tp^e\rceil} \subset (f_1^{(\alpha_1+1)p^e}, \ldots, f_d^{(\alpha_d+1)p^e}) : (f_1^{\alpha_1} \cdots f_d^{\alpha_d})^{p^e} \subset (f_1^{p^e},\ldots,f_d^{p^e})^* \] where the last inclusion follows from the colon capturing property (see \cite[Theorem~2.3(b)]{SixLec}). Multiplying by a parameter test element $c'$, we get $cc'x^{p^e}\mfa^{\lceil tp^e\rceil} \subset (f_1,\ldots,f_d)^{[p^e]}$ for all $e$ large implying that $x \in I^{*\mfa^t}$.	
\end{proof}

\section{Applications: Adjusted normalizations of domains and \texorpdfstring{$F$}{F}-nilpotence of pairs}

Suppose $R$ is a domain. It is an elementary exercise to see that the normalization of $\overline{R}$ can be viewed as the set of fractions $r/s$ in $\Frac R$ such that $r$ is in $\overline{(s)}$. Similarly, for domains of prime characteristic, the weak normalization of $R$, written ${}^*R$, is the set of fractions $x/y \in \Frac(R)$ such that $x \in (y)^F$. From this perspective, it is natural to consider submodules of $\Frac R$ defined in terms of our adjusted closures as well. Recall an $R$-submodule $I$ of $\Frac R$ is called a \textbf{fractional ideal} if there is a nonzero $r \in R$ such that $rI \subset R$. Fractional ideals are exceptionally fruitful objects of study in the context of Dedekind domains found in number theory. 
 
\begin{defn}
Let $R$ be a domain, $\mfa\subset R$ a nonzero ideal, and $t \in \mathbb{R}_{> 0}.$ Then, we define the \textbf{(Hara-Yoshida) $\mfa^t$-normalization of $R$}, written $\overline{R}^{\mfa^t}$, to be the set \[
\overline{R}^{\mfa^t} = \left\lbrace \left. \frac{r}{s}\in \Frac R \,\right|\, r \in \overline{(s)}^{\mfa^t}\right\rbrace.
\]
\end{defn} 

First, note that membership in $\overline{R}^{\mfa^t}$ is well-defined. Indeed, if $r/s = x/y \in \Frac R$ and $r \in \overline{(s)}^{\mfa^t}$, then there is a $c\neq 0$ such that for all $n$ and all $w \in \mfa^{\lceil tn \rceil}$, there is a $u \in R$ such that $cr^n w = us^n.$ Multiplying $y^n$ on both sides we have $cr^ny^n w = us^ny^n$, however $r^ny^n = s^nx^n$ as the fractions $r/s$ and $x/y$ are equal in $\Frac R$. Hence $cr^ny^nw = cs^nx^nw = us^ny^n$, and canceling $s^n$ we have $cx^nw = uy^n$. As $n$ and $w \in \mfa^{\lceil tn \rceil}$ were arbitrary, this shows $x \in \overline{(y)}^{\mfa^t}$.

Recall from \cite{HY}, the \textbf{$\mfa^t$-test ideal of $R$} is the ideal generated by the $\mfa^t$-test elements, that is the elements $c \in R^\circ$ such that, for all ideals $I$ in $R$ and all $x \in I^{*\mfa^t}$, we have  $cx^{p^e}\mfa^{\lceil tp^e\rceil} \subset I^{\fbp{p^e}}$.

\begin{prop}\label{prop: a^t integral closure of ring is submodule}
Let $R$ be a domain, $\mfa\subset R$ a nonzero ideal, and $t\in \R_{>0}$. Then, $\overline{R}^{\mfa^t}$ is an $R$-submodule of $\Frac R$. Furthermore, when $R$ has prime characteristic $p>0$ and the $\mfa^t$-test ideal is nonzero (e.g. $R$ is $F$-finite), $\overline{R}^{\mfa^t}$ is a fractional ideal of $R$.
\end{prop}

\begin{proof}
Let $r/s$ and $x/y \in \overline{R}^{\mfa^t}$. Then, there are $c_1, c_2$ nonzero such that $c_1r^n \mfa^{\lceil tn \rceil} \subset (s^n)$ and $c_2x^n \mfa^{\lceil tn \rceil} \subset (y^n)$ for all $n.$ We want $(ry+sx)/(sy) \in \overline{R}^{\mfa^t}$, so we seek a $c\neq 0$ such that $c(ry+sx)^n \mfa^{\lceil tn\rceil} \subset (s^ny^n)$. Taking a term $(ry)^i(sx)^{n-i}$ of the binomial expansion, if we apply $c_1c_2$ we can rearrange to obtain $(c_1r^i)(c_2x^{n-i})y^is^{n-i}$. Furthermore, by picking $c_3 \in \mfa\setminus \{0\}$, $c_3\mfa^{\lceil tn \rceil} \subset \mfa^{\lceil ti \rceil}\mfa^{\lceil t(n-i)\rceil}$ since $\lceil tn\rceil + 1 \geq \lceil ti\rceil + \lceil t(n-i)\rceil$. Letting $c=c_1c_2c_3$, we then have \[
c((ry)^i(sx)^{n-i})\mfa^{\lceil tn \rceil} \subset (c_1r^i\mfa^{\lceil ti\rceil} s^{n-i})(c_2x^{n-i}\mfa^{\lceil t(n-i) \rceil}y^i)\subset (s^n)(y^n)=(s^ny^n),
\] for each term, so $c(ry+sx)^n \mfa^{\lceil tn \rceil} \subset (s^ny^n)$ as required. Now, if $r \in R$ and $x/y\in \overline{R}^{\mfa^t}$, we have $r(x/y)=(rx)/y$, and if $cx^n\mfa^{\lceil tn \rceil} \subset (y^n)$, then $c(rx)^n\mfa^{\lceil tn \rceil}\subset r^n(y^n)\subset (y^n)$, so $rx/y \in \overline{R}^{\mfa^t}$. This shows that $\overline{R}^{\mfa^t}$ is an $R$-submodule of $\Frac R$.

Finally, if $R$ is of prime characteristic $p>0$, by \Cref{prop: tight closure is integral closure for principal ideals}, $\overline{(s)}^{\mfa^t} = (s)^{*\mfa^t}$. So if $c$ is a nonzero element of the Hara-Yoshida $\mfa^t$-test ideal, then for any $r/s \in \overline{R}^{\mfa^t}$, $cr = us$ for some $u \in R$, so $cr/s = u/1 \in R$. Hence, $\overline{R}^{\mfa^t}$ is a fractional ideal.
\end{proof}

\begin{remark}
It is unclear to us whether $\overline{R}^{\mfa^t}$ is a fractional ideal outside of the case that $R$ is of prime characteristic $p>0$. This is equivalent to the ideal \[\mathfrak{c}=\bigcap_{x \neq 0} (x):\overline{(x)}^{\mfa^t}\] being nonzero. Further, it is clear that $\mathfrak{c}$ is contained in the conductor ideal of $R\rightarrow \overline{R}$. To show $\mathfrak{c}$ is nonzero, one would usually appeal to localization arguments, but $\overline{(x)}^{\mfa^t}$ does not readily localize.
\end{remark}

We can consider variants of the construction of $\overline{R}^{\mfa^t}$ for each one of the adjusted closures we've discussed, so we will write $R^{*\mfa^t}$, $R^{F\mfa^t}$, ${}^{\mfa^t}R^*$ and ${}^{\mfa^t}R^F$ for these objects. As noted in the proof of the previous proposition, since $\overline{(s)}^{\mfa^t}$ = $(s)^{*\mfa^t}$, we have $\overline{R}^{\mfa^t} = R^{*\mfa^t}$ and similarly ${}^{\mfa^t}R^* = \overline{R}$ as the Vraciu integral closure is the same as the ordinary integral closure. So, from our new closures, we obtain three new subsets of $\Frac R$; $\overline{R}^{\mfa^t}$, $R^{F\mfa^t}$, and ${}^{\mfa^t}R^F$. We refer to the latter two as the \textbf{Hara-Yoshida $\mfa^t$-weak normalization of $R$} and the \textbf{Vraciu $\mfa^t$-weak normalization of $R$} respectively. Furthermore, with minor alterations to the proof of the previous proposition, it is easy to see $R^{F\mfa^t}$ and ${}^{\mfa^t}R^F$ are also $R$-submodules of $\Frac R$. In fact, more is true of ${}^{\mfa^t}R^F$.

\begin{prop}\label{prop: V a^t Frob closure of ring is subring of frac field}
Let $R$ be a domain, $\mfa \subset R$ a nonzero ideal, and $t\in \R_{>0}$. Then, ${}^{\mfa^t}R^F$ is a subring of $\overline{R}$. 
\end{prop}

\begin{proof}
Write $S={}^{\mfa^t}R^F$. We have already discussed that $S$ is an $R$-submodule of $\Frac R$, so we need only show that $S$ is closed under multiplication. If $r/s$ and $x/y$ are in $S$, we need $rx \in {}^{\mfa^t}(sy)^F$. We have that \[(rx)^{p^e} \mfa^{\lceil tp^e\rceil} = r^{p^e}(x^{p^e}\mfa^{\lceil tp^e\rceil})\subset r^{p^e}((y^{p^e})\mfa^{\lceil tp^e\rceil})\subset (r^{p^e}\mfa^{\lceil tp^e\rceil})(y^{p^e}) \subset (s^{p^e})(y^{p^e})\mfa^{\lceil tp^e\rceil} = (sy)^{\fbp{p^e}}\mfa^{\lceil tp^e\rceil} \] which shows $(rx)/(sy)\in S$. Furthermore, that $S\subset \overline{R}$ is simply the fact that ${}^{\mfa^t}(s)^F \subset \overline{(s)}$. 
\end{proof}

\begin{remark}\label{rmk: little dipper}
This subring ${}^{\mfa^t}R^F$ contains $R$, but also contains the weak normalization of $R$. In particular, for any nonzero ideal $\mfa\subset R$ and $t \in \R_{>0}$, we have the following diagram of inclusions, essentially from the diagram in \Cref{rmk: closure comparison diagram}. The relative simplicity of this diagram to the other is explained by \Cref{prop: tight closure is integral closure for principal ideals}, as the tight closure row collapses.
\begin{center}
\begin{tikzpicture}
\node at (0,0) {
\begin{tikzcd}
R \arrow[hook]{r}& {}^*R \arrow[hook]{r} & {}^{\mfa^t}R^F \arrow[hook]{r} \arrow[hook]{d} & R^{F\mfa^t} \arrow[hook]{d} \\
\, & \, & \overline{R} \arrow[hook]{r} & \overline{R}^{\mfa^t}
\end{tikzcd}
};
\end{tikzpicture}
\end{center}
\end{remark}

We give examples to show that these inclusions may be proper.  First we show that the Hara-Yoshida type normalizations may be larger than the usual normalization.

\begin{ex}\label{ex: little dipper example 1}
	Let $k$ be a field of prime characteristic $p>0$, $R = k[x]$, $\mfa = (x^m)$ for some $m\in\N$, and $t\in \R_{>0}$.  We will show that $1/x^{\lfloor mt\rfloor} \in\Frac{R}$ is an element of $R^{F\mfa^t}$ by demonstrating that $1\in (x^{\lfloor mt\rfloor})^{F\mfa^t}$. For $e\gg 0$, $m\lceil tp^e\rceil \geq mtp^e \geq \lfloor mt\rfloor p^e$, and so 
	\[1\cdot\mfa^{\lceil tp^e\rceil}  = (x^{m\lceil tp^e\rceil}) \subset (x^{\lfloor mt\rfloor})^{[p^e]},\]
	showing that $1\in (x^{\lfloor mt\rfloor})^{F\mfa^t}$.  Therefore $1/x^{\lfloor mt\rfloor} \in R^{F\mfa^t}$, and if $\lfloor mt\rfloor \geq 1$, $1/x^{\lfloor mt\rfloor} \notin R = \overline{R}$.  Thus in this example the inclusions ${}^{\mfa^t}R^F\subsetneq R^{F\mfa^t}$ and $\overline{R} \subsetneq \overline{R}^{\mfa^t}$ are proper.
\end{ex}

The next example is of a ring $R$ and ideal $\mfa$ such that ${}^*R \subsetneq {}^{\mfa^t}R^F \subsetneq \overline{R}$ for all $t\in \R_{>0}$.

\begin{ex}\label{ex: little dipper example 2}
Let $k$ be a field of characteristic $3$, $S=k[x,y,z]$, and  $R$ the $k$-subalgebra of $S$ generated by monomials $x^4,y^4$, and $x^iy^iz$ for $0\leq i,j\leq 3$.  In other words, $R = k[x^4,y^4]\oplus zk[x,y,z]$ as $k$-vector spaces, or $R$ can be thought of as the affine semigroup ring $k[A]$ for $A$ the affine semigroup generated by the columns of the $3\times 18$ matrix below. \[
A = \left[
\begin{array}{cccccccccccccccccc}
4 & 0 & 0 & 1 & 2 & 3 & 0 & 1 & 2 & 3 & 0 & 1 & 2 & 3 & 0 & 1 & 2 & 3 \\
0 & 4 & 0 & 0 & 0 & 0 & 1 & 1 & 1 & 1 & 2 & 2 & 2 & 2 & 3 & 3 & 3 & 3 \\
0 & 0 & 1 & 1 & 1 & 1 & 1 & 1 & 1 & 1 & 1 & 1 & 1 & 1 & 1 & 1 & 1 & 1 
\end{array}\right]
\]
 We will show that the inclusions ${}^*R\subsetneq {}^{\mfa^t}R^F \subsetneq \overline{R}$ are proper by determining which monomials $x^ay^b$ belong to them.

We have that $\overline{R}=S$: for any $x^ay^b\in S$, we can write $x^ay^b = (x^ay^bz)/z$.  Now for any $n\in\N$, $z(x^ay^bz)^n = x^{an}y^{bn}z\cdot z^n \in (z)^n$. Therefore $x^ay^bz\in\overline{(z)}$, and so $x^ay^b = (x^ay^bz)/z\in\overline{R}$.  Since $S$ is normal and $S\subset \overline{R}$, they are equal.

Next we consider the weak normalization ${}^*R$ of $R$.  We have that $x^ay^b = (x^ay^bz)/z\in {}^*R$ if and only if $x^ay^bz\in (z)^F$.  We claim that $x^ay^bz\in (z)^F$ if and only if $4|a$ and $4|b$, i.e.\ $x^ay^b\in R$.  The backwards direction is immediate.
For the forward direction, suppose that $x^ay^bz\in (z)^F$, so that for all $e\gg 0$, $x^{3^ea}y^{3^eb}z^{3^e} = z^{3^e}f$ for some monomial $f\in R$.  This implies that $f = x^{3^ea}y^{3^eb}\in R$, and so $4|3^ea$ and $4|3^eb$.  Therefore $4|a$ and $4|b$.
Hence $x^ay^b\in {}^*R$ if and only if $x^ay^b\in R$.

Now we turn our attention to the Vraciu $\mfa^t$-weak normalization  ${}^{\mfa^t}R^F$ of $R$, with $\mfa = (x^2yz, xy^2z)$ and $t\in \R_{>0}$.  We claim that $x^ay^b\in {}^{\mfa^t}R^F$ exactly when $4|(a+b)$.  We will show this by showing that $x^ay^bz\in {}^{\mfa^t}(z)^F$.  

Suppose that $4$ divides $a + b$.  Also, suppose that $4$ divides neither $a$ nor $b$, since in that case we already have that $x^ay^b\in R$.  Note that, for a fixed $e$,
\begin{equation}\label{vraciu f-closure of rings eq 1}
	(x^ay^bz)^{3^e}\mfa^{\lceil 3^et\rceil} = (x^ay^bz)^{3^e}(x^2yz,xy^2z)^{\lceil 3^et\rceil}.
\end{equation}
As generators of $\mfa^{\lceil 3^et\rceil}$ are of the form $(x^2yz)^i(xy^2z)^{\lceil 3^et\rceil - i}$, $0\leq i\leq \lceil 3^et\rceil$, the left hand side of \eqref{vraciu f-closure of rings eq 1} is generated by all monomials of the form
\begin{align*}
	(x^ay^bz)^{3^e}(x^2yz)^i(xy^2z)^{\lceil 3^et\rceil - i}
	& = x^{3^ea + \lceil 3^et\rceil +i}y^{3^eb + 2\lceil 3^e t\rceil -i}z^{3^e + \lceil 3^e t\rceil}.
\end{align*}

Let $q$ be the smallest nonnegative integer such that $j = \lceil 3^et\rceil -3^ea - i + 4q$ is nonnegative.  We have assumed that $a>0$ and we may assume that $e\geq 2$, so that $3^ea>4$, which implies that $j \leq \lceil 3^e t\rceil$.  Now we have that 
\[3^ea + \lceil 3^et\rceil +i = 4q + 2\lceil 3^et\rceil - j \quad
\text{and}\quad
3^eb + 2\lceil 3^e t\rceil -i = 3^eb + \lceil 3^et\rceil +3^ea-4q + j.\]
By the construction of $q$ we have that either $q=0$ or $j\leq 3$, and in the second case, 
\[3^ea+3^eb-4q = 3^eb+\lceil 3^et\rceil -i-j\geq 3^eb-j\geq 0\]
since we have assumed that $b>0$.  Hence, in any case, $3^ea+3^eb-4q \geq 0$.
Therefore,
\begin{align*}
	x^{3^ea + \lceil 3^et\rceil +i}y^{3^eb + 2\lceil 3^e t\rceil -i}z^{3^e + \lceil 3^e t\rceil}
	 & = x^{4q + 2\lceil 3^et\rceil - j}y^{3^eb + \lceil 3^et\rceil +3^ea-4q + j}z^{3^e + \lceil 3^e t\rceil} \\ 
	& = x^{4q}y^{3^ea+3^eb - 4q}z^{3^e}(x^2yz)^{\lceil 3^et\rceil - j}(xy^2z)^{j}\\ 
	&\in (z)^{[3^e]}\mfa^{\lceil 3^et\rceil}
\end{align*}
since $x^{4q}y^{3^ea+3^eb - 4q}\in R$. Hence $x^ay^bz\in {}^{\mfa^t}(z)^F$.

For the other direction, suppose that $a+b$ is not divisible by $4$.  For any $e$, we have that
$(x^ay^bz)^{3^e}\mfa^{\lceil 3^et\rceil}$ contains the monomial $x^{3^ea} y^{3^e b}z^{3^e}(xy^2z)^{\lceil 3^et\rceil} = x^{3^ea+\lceil 3^et\rceil}y^{3^eb+2 \lceil 3^et\rceil}z^{3^e + \lceil 3^et\rceil}$.  If this were in $(z)^{[3^e]}\mfa^{\lceil 3^et\rceil} = z^{3^e}(x^2yz,xy^2z)^{\lceil 3^et\rceil},$ then there would need to exist some $0\leq i\leq \lceil 3^et\rceil$ such that
\[x^{3^ea+\lceil 3^et\rceil}y^{3^eb+2\lceil 3^et\rceil}z^{3^e + \lceil 3^et\rceil}
\in (z^{3^e}(x^2yz)^i(xy^2z)^{\lceil 3^et\rceil - i}) = (x^{\lceil 3^et\rceil +i}y^{2\lceil 3^et\rceil -i}z^{3^e+\lceil 3^et\rceil}),    \]
 which would mean that $x^{3^ea-i}y^{3^eb + i}\in R$. This implies that $4$ divides both $3^ea - i$ and $3^b + i$, hence divides $3^e(a+b)$, which is a contradiction.  Therefore $x^ay^b\in {}^{\mfa^t}R^F$ if and only if $4\mid (a + b)$, so that ${}^{\mfa^t}R^F = k[x^4,x^3y,x^2y^2,xy^3,y^4]\oplus zk[x,y,z]$.

Consequently, we have $R \subsetneq {}^{\mfa^t}R^F \subsetneq \overline{R}$, and in fact that ${}^{\mfa^t}R^F$ is the affine semigroup ring $k[B]$ for $B$ the affine semigroup generated by the columns of the matrix below. \[
B=\left[
\begin{array}{ccccccccccccccc}
4 & 0 & 1 & 2 & 3 & 0 & 1 & 2 & 3 & 0 & 1 & 2 & 0 & 1 & 0\\
0 & 4 & 3 & 2 & 1 & 0 & 0 & 0 & 0 & 1 & 1 & 1 & 2 & 2 & 3\\
0 & 0 & 0 & 0 & 0 & 1 & 1 & 1 & 1 & 1 & 1 & 1 & 1 & 1 & 1
\end{array}
\right]
\]
\end{ex}

We conclude our discussion of the adjusted closures of rings by showing that when $\mfa$ contains a regular sequence of length 2, then $\overline{R} = \overline{R}^{\mfa^t}$.  First we prove a more general result.

\begin{prop}\label{lem:length 2 regular sequence colon ideal} Let $R$ be a ring, $g$ a regular element of $R$, and $x,y$ a regular sequence in $R$.  For any $n\in \N$, $(g):(x,y)^n = (g)$.	
\end{prop}

\begin{proof}
	We induce on $n$, the  result holding when $n=0$ since $(g)~:~(x,y)^0 = (g)~:~R = (g)$.  Suppose now that $n\geq 1$ and that $(g):(x,y)^{n-1} = (g)$.  Let $f\in (g):(x,y)^n$. For any $0\leq i\leq n-1$, $x^{n-i}y^{i}$ and $x^{n-i-1}y^{i+1}$ are in $(x,y)^n$, and so we have that $fx^{n-i}y^{i} = gh$ and $fx^{n-i-1}y^{i+1} = gh'$ for some $h,h'\in R$.  Multiplying the first equation by $y$ and the second equation by $x$ we have that 
	\[ghy = fx^{n-i}y^{i+1} = gh'x.\]
	Since $g$ is a regular element, $hy = h'x$.  Since $y$ is regular over $R/(x)$, $x$ must divide $h$.  Let $h''\in R$ be such that $h = xh''$.  Now, we have that $fx^{n-i}y^{i} = gh = gxh''$, and since $x$ is a regular element, $fx^{n-1-i}y^i=gh'' \in (g)$.
  	
	Since this holds for all $0\leq i\leq n-1$, and since the elements $x^{n-1-i}y^{i}$ are the generators of $(x,y)^{n-1}$, we have that $f\in (g):(x,y)^{n-1} = (g)$.\end{proof}

\begin{corollary}If $R$ is a domain, and $\mfa\subset R$ is an ideal with $\mathrm{grade}(\mfa)\geq 2$, then $\overline{R}^{\mfa^t} = \overline{R}$ for all $t\in\R_{>0}$.
	
\end{corollary}

\begin{proof}Let $\dfrac{f}{g}\in \overline{R}^{\mfa^t}$, so that there exists $0\neq c\in R$ such that for all $n$, $cf^n \in (g)^n:\mfa^{\lceil tn\rceil}$.  Let $x,y\in \mfa$ be a regular sequence, and note that since $(x,y)\subset \mfa$, we also have that for all $n$, by \Cref{lem:length 2 regular sequence colon ideal},
\[cf^n\in (g)^n:\mfa^{\lceil tn\rceil}\subset (g^n):(x,y)^{\lceil tn\rceil} = (g^n).\]
Since this holds for all $n$, $f\in \overline{(g)}$, and so $\dfrac{f}{g}\in \overline{R}$.	
\end{proof}

The Hara-Yoshida tight closure has been used to define $F$-rationality of ideal pairs. See \cite{ST} for this discussion. In a similar vein, we can define $F$-nilpotence of pairs, recall \Cref{defn: F-nilpotent ring}.

\begin{defn}\label{defn: F-nilp}
Let $(R,\mfm)$ be an excellent, equidimensional, reduced local ring, $t \in \mathbb{R}_{> 0}$, and suppose $\mfa$ is an ideal of $R$ of positive height. We say the pair $(R,\mfa^t)$ is \textbf{$F$-nilpotent} if $\mfq^{*\mfa^t}=\mfq^F$ for all ideals $\mfq \subset R$ generated by a system of parameters for $R$. 
\end{defn}

Though the definition only includes the classical Frobenius closure and the Hara-Yoshida tight closure, in our study of $F$-nilpotence of pairs, we will need to factor through our other new closures as well. We now demonstrate that $F$-nilpotence of pairs is analogous to $F$-rationality of pairs, compare the following result with \cite[Prop.~6.2]{ST}.

\begin{prop}\label{prop: elementary properties of a^t F-nilpotent}
Let $(R,\mfm)$ be an excellent, equidimensional, reduced local ring, $t \in \mathbb{R}_{> 0}$, and $\mfa\subset \mfb $ be ideals of $R$ such that $\mfa$ is of positive height. Then, we have the following.
\begin{enumerate}[{\rm(a)}]
\item If $(R,\mfa^t)$ is $F$-nilpotent, then $(R,\mfb^s)$ is $F$-nilpotent for all $0 < s \le t$. 
\item If $\mfa$ is a reduction of $\mfb$, then $(R,\mfa^t)$ is $F$-nilpotent if and only if $(R,\mfb^t)$ is $F$-nilpotent.
\item If $(R,\mfa^t)$ is $F$-nilpotent, then $R$ is $F$-nilpotent.
\end{enumerate}
\end{prop}

\begin{proof}
It is evident that the Hara-Yoshida $\mfa^t$-tight closure enjoys a similar set of monotonicity properties as the Hara-Yoshida $\mfa^t$-integral closure demonstrated in \Cref{prop: basic properties of integral closures}, part (c). Hence, part (a) of this proposition is immediate. Similarly, the first half of part (b) is immediate, and the second half follows from \cite[Prop.~1.3]{HY}. Finally, (c) follows from the fact that $\mfq^F\subset \mfq^* \subset \mfq^{*\mfa^t}$, so if $\mfq^{*\mfa^t} = \mfq^F$ for all parameter ideals $\mfq$, then $\mfq^*=\mfq^F$ for all parameter ideals $\mfq$, so $R$ is $F$-nilpotent by \cite[Thm.~A]{PQ}.
\end{proof}

We now demonstrate that the condition defining $\mfa^t$ $F$-nilpotence descends from full systems of parameters to partial systems of parameters. Crucially, the proof uses that $F$-nilpotent rings have a uniform Frobenius test exponent for parameter ideals,  shown by Quy in \cite{Q}. 

\begin{lemma}\label{lem: a^t F-nilpotent works for partial sops too}
Suppose $(R,\mfm)$ is an excellent local domain and $(R,\mfa^t)$ is $F$-nilpotent. Then, for an ideal $I$ generated by a partial system of parameters, we also have $I^{*\mfa^t} = I^F$. In particular, for each nonzero $x\in R$, we have $(x)^{*\mfa^t} = \overline{(x)}^{\mfa^t}=(x)^F$.
\end{lemma}

\begin{proof}
Since $(R,\mfa^t)$ is $F$-nilpotent, by \Cref{prop: elementary properties of a^t F-nilpotent} we have $R$ itself is $F$-nilpotent, so by \cite[Main~Thm.]{Q} we have that $R$ has a finite Frobenius test exponent for parameter ideals, that is, there is an $e_0 \in \mbn$ such that for all ideals $\mfq$ generated by a full system of parameters, we have $(\mfq^F)^{\fbp{p^{e_0}}} = \mfq^{\fbp{p^{e_0}}}$. 

Write $d=\dim R$ and let $x_1,\ldots,x_t$ be a partial system of parameters and $I=(x_1,\ldots,x_t)$. Complete to $x_1,\ldots,x_t,y_{t+1},\ldots,y_d$ a full system of parameters for $R$, and write \[\mfq_n = (x_1,\ldots,x_t,y_{t+1}^n,\ldots,y_d^n),\] notably generated by a full system of parameters. We then have $I^F \subset \mfq_n^F$ for all $n$. Conversely if $x \in \mfq_n^F$ for all $n$ we have \[x^{p^{e_0}} \in \left(x_1^{p^{e_0}},\ldots,x_t^{p^{e_0}},(y_{t+1}^{p^{e_0}})^n,\ldots,(y_d^{p^{e_0}})^n\right) \] for all $n$. After intersecting over all $n$, we then obtain \[ x^{p^{e_0}} \in \bigcap_n (x_1^{p^{e_0}},\ldots,x_t^{p^{e_0}},(y_{t+1}^{p^{e_0}})^n,\ldots,(y_d^{p^{e_0}})^n) = I^{\fbp{p^{e_0}}}\] by Krull Intersection. Hence, $x \in I^F$. Now, then $I^{*\mfa^t} \subset \mfq_n^{*\mfa^t}$ for all $n$, so $I^{*\mfa^t} \subset \cap_n \mfq_n^{*\mfa^t} = \cap_n \mfq_n^F = I^F$. But, $I^F \subset I^{*\mfa^t}$ holds in general, so we have $I^F = I^{*\mfa^t}$.
\end{proof}

\begin{corollary}
Suppose $(R,\mfa^t)$ is $F$-nilpotent. Then, $\overline{R}^{\mfa^t}=\,{}^*R$. In particular, $\overline{R}^{\mfa^t}$ is a subring of $\Frac R$, and is local when $R$ is.
\end{corollary}

\begin{proof}
Given the preceding theorem, the only part of the corollary left unproven is the claim that $\overline{R}^{\mfa^t}$ is local. However, this is true since $R\rightarrow \overline{R}^{\mfa^t}={}^*R$ is a purely inseparable extension of rings, and hence induces a bijection on spectra. 
\end{proof}

In \cite{PQ}, Polstra and Quy proved the cohomological definition of $F$-nilpotence given by Srinivas and Takagi is equivalent to a definition in terms of tight and Frobenius closure parameter ideals. We now demonstrate that a similar equivalence follows in the context of our definition for $F$-nilpotence of pairs. First, we recall from Hara-Yoshida \cite[Definition~1.1]{HY} the definition of the $\mfa^t$-tight closure of submodules.

\begin{defn}
Let $R$ be a ring of prime characteristic $p>0$, $\mfa\subset R$ an ideal of positive height, and $t\in \mbr_{> 0}$. Then, for a submodule $N$ of an $R$-module $M$, the \textbf{$\mfa^t$-tight closure of $N$ in $M$} is the submodule $N^{*\mfa^t}_M$ defined by \[
N^{*\mfa^t}_M = \left\lbrace m \in M \mid \exists c \in R^\circ \text{ such that for all } e \gg 0, \, c\mfa^{\lceil tp^e\rceil }F^e_*(m) \subset F^e_*(N) \right\rbrace,
\] where $F^e_*(N)$ is the usual restriction of scalars of $N$ along $F^e:R\rightarrow R$.
\end{defn}

Hara-Yoshida investigated $0^{*\mfa}_{H^d_\mfm(R)}$ for a local ring $(R,\mfm)$ of dimension $d$, and demonstrated in \cite[Discussion~1.14]{HY} that its annihilator is equal to the $\mfa$-tight closure test ideal in the Gorenstein case. Further, they also related it to the natural Frobenius action on local cohomology in \cite[Proposition~1.15]{HY}, which we record below. The referenced proposition does not include the real parameter $t$, but the proof easily extends to this case.

\begin{prop}[\cite{HY}]
Let $(R,\mfm)$ be a $d$-dimensional excellent normal local ring of prime characteristic $p>0$ and let $\mfa \subset R$ be an ideal of positive height. Then, $0^{*\mfa^t}_{H^d_\mfm(R)}$ is the unique maximal proper submodule $N$ of $H^d_\mfm(R)$ with respect to the property that $\mfa^{\lceil tp^e \rceil} F^e(N) \subset N$ for all $e \gg 0$, where $F: H^d_\mfm(R)\rightarrow H^d_\mfm(R)$ is the canonical Frobenius action on local cohomology.
\end{prop}

Now we can connect the $F$-nilpotence of pairs property to the local cohomology modules of $R$.

\begin{theorem}\label{thm: CM a^t F-nilpotent is characterized by LC}
Let $(R,\mfm)$ be an excellent, Cohen-Macaulay local domain of prime characteristic $p>0$. Then, $(R,\mfa^t)$ is $F$-nilpotent if and only if $0^{*\mfa^t}_{H^d_\mfm(R)} = 0^F_{H^d_\mfm(R)}$.
\end{theorem}

\begin{proof}

Fix a parameter ideal $\mfq = (x_1,\ldots,x_d)$ and writing $\mfq_n=(x_1^n,\ldots,x_d^n)$, we will show that \[
0^{*\mfa^t}_{H^d_\mfm(R)} \cong \varinjlim \dfrac{\mfq^{*\mfa^t}_n}{\mfq_n}
\] where the direct limit maps are multiplication by $x=x_1\cdots x_d$. First, let $[z+\mfq_m] \in \varinjlim \mfq_n^{*\mfa^t}/\mfq_n$. Then, for all $e\ge 0$ and each $v_e \in \mfa^{\lceil tp^e\rceil}$, we have $v_e F^e[z+\mfq_m] = [v_ez^{p^e}+\mfq_{mp^e}] = 0$ since $z\in \mfq_m^{*\mfa^t}$. Conversely, if $[z+\mfq_m] \in 0^{*\mfa^t}_{H^d_\mfm(R)}$, then for all $e \ge 0$ and each $v_e \in \mfa^{\lceil tp^e \rceil}$, we have have as before $[v_ez^{p^e}+\mfq_{mp^e}] = 0$. But since $R$ is Cohen-Macaulay, the direct limit system is injective, and hence $v_ez^{p^e} \in \mfq_{mp^e}$, which implies $z \in \mfq_m^{*\mfa^t}$. 

Now suppose $(R,\mfa^t)$ is $F$-nilpotent. Then, $\mfq^{*\mfa^t} = \mfq^F$ for all parameter ideal $\mfq$, and consequently, $\varinjlim \mfq^{*\mfa^t}_n/\mfq_n = \varinjlim \mfq^F_n/\mfq_n$, and now we use that $\varinjlim \mfq^F_n /\mfq_n \cong 0^F_{H^d_\mfm(R)}$, see \cite[Remark~2.6.3]{PQ}. For the converse, we again use that the direct limit system is injective, so that $0^{*\mfa^t}_{H^d_\mfm(R)} = 0^F_{H^d_\mfm(R)}$ implies $\varinjlim \mfq^{*\mfa^t}_n/\mfq_n = \varinjlim \mfq^F_n/\mfq_n$ implies $\mfq^{*\mfa^t} = \mfq^F$ for all parameter ideals $\mfq$ (as $\mfq$ was arbitrary), thus $(R,\mfa^t)$ is $F$-nilpotent.
\end{proof}

It seems likely that, in the absence of the Cohen-Macaulay hypothesis, the theorem should still hold, so long as we impose that the lower local cohomology modules are also nilpotent under $F$, i.e. that the ring is \textit{weakly $F$-nilpotent}. 

\section{Adjusted ideal closures in semigroup rings}  \label{sec:semigroup}

We now consider the class of rings $k[S]$, where $k$ is a field  and $S$ is a submonoid of the free abelian group $M=\Z^d$ of rank $d$.  We have that $M$ naturally sits inside $V=M\otimes_\Z \R\cong \R^d$ and we can utilize convex geometry in $\R^d$ to analyze monomial ideals in $k[S]$. We will briefly describe the objects we need; we refer the reader to books such as \cite{Fulton} or \cite{CLS} for further background on affine semigroup rings. 

\begin{setting} \label{setting - semigroup ring} Fix a field $k$. Let $M$ be a rank $d$ free abelian group, $S$ a finitely generated submonoid of $M$ such that $S$ generates $M$ as a group, and $V=M\otimes_\Z\R$.  Let $\sigma^\vee = \mathrm{cone}(S)\subset V$, and $\sigma=\mathrm{cone}(\vec c_1,\ldots, \vec c_n)\subset V^*$ the dual cone to $\sigma^\vee$.  That is, $\vec v\in \sigma^\vee$ if and only if $\vec v\in V$ and $\langle \vec c_i,\vec v\rangle \geq 0$ for all $i=1,\ldots, n$.  Let $\overline{S}=\sigma^\vee\cap M$.  Thus, $\overline{S}$ is the saturation of $S$ inside $M$, and $\overline{R} = k[\overline{S}]$ is the integral closure of $R=k[S]$ inside $k[M]$. Given a monomial ideal $I\subset k[M]$, we will write $\Exp I\subset M$ for the set of exponents appearing in monomials in $I$, that is, $\Exp I = \{\vec v\in S\mid x^{\vec v}\in I\}$. Note that for any ideal $J$ of $R$, $(\Exp(J)+\sigma^\vee)\cap M = \Exp(J\overline{R})$.
	
	Let $I$ and $\mfa$ be monomial ideals of $R$, so that there exist finite sets $\mathcal{A}, \mathcal{B}\subset S$ such that $\mfa = (x^{\vec a} \mid \vec a\in \mathcal{A})$ and $I=(x^{\vec b}\mid \vec b\in \mathcal{B})$. Let $A=(\vec a_1|\cdots |\vec a_r)$, and let $\Delta = \{\vec u\in\R_{\geq 0}^r\mid |\vec u|_1=1\}$, where $|\vec u|_1$ is the sum of the components of $\vec u$.  Let $H=\mathrm{Hull}(\vec a_1,\ldots,\vec a_r)$ be the convex hull of the columns of $A$, and notice that $H=\{A\vec u\mid \vec u\in \Delta\}$.
\end{setting}

\begin{lemma}\label{lem: working with hulls}
	Given \Cref{setting - semigroup ring}, for each $n \in \mathbb{Z}_{\geq 0}$ and $s\in \mathbb{R}_{>0}$, we have 
	\begin{enumerate}[{\rm(a)}]
		\item $\Exp(\mfa^n)\subset nH+\sigma^\vee$, and
		\item $sH\subset \Exp(\mfa^{\lfloor s-d\rfloor})+\sigma^\vee$ and $sH\cap \overline{S} \subseteq \Exp(\mfa^{\lfloor s-d\rfloor}\overline{R})$.
	\end{enumerate}
\end{lemma}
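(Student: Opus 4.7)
The plan is to handle (a) by a direct unpacking of the definition of $\mfa^n$, and (b) by combining Carathéodory's theorem with a componentwise floor argument.

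For (a), since $\mfa$ is generated by $x^{\vec a_1},\ldots,x^{\vec a_r}$, the monomial ideal $\mfa^n$ is generated by the monomials $x^{\vec a_{i_1}+\cdots+\vec a_{i_n}}$ for all choices of indices $i_1,\dots,i_n\in\{1,\dots,r\}$. Each such exponent is a sum of $n$ vectors from $H=\operatorname{Hull}(\vec a_1,\ldots,\vec a_r)$, hence lies in $nH$. A general element of $\Exp(\mfa^n)$ differs from such a generator exponent by an element of $S\subset \sigma^\vee$, and the containment $\Exp(\mfa^n)\subset nH+\sigma^\vee$ follows.

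For (b), I would fix $\vec w\in sH$ and write $\vec w = sA\vec u$ for some $\vec u\in\Delta$. Carathéodory's theorem applied to $H\subset \R^d$ lets me assume, after replacing $\vec u$ by another element of $\Delta$ with the same image under $A$, that $\vec u$ has at most $d+1$ nonzero coordinates. Setting $\vec m = \lfloor s\vec u\rfloor\in \mathbb{Z}_{\geq 0}^r$ (componentwise floor), each coordinate of $s\vec u-\vec m$ is in $[0,1)$, and at most $d+1$ of them are nonzero, so the sum of the coordinates of $s\vec u-\vec m$ is strictly less than $d+1$. Therefore $|\vec m|_1 > s-d-1$, and since $|\vec m|_1$ is an integer, this forces $|\vec m|_1\geq \lfloor s-d\rfloor$ (checking separately the cases where $s-d$ is or is not an integer).

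To finish, I would choose $\vec m'\in\mathbb{Z}_{\geq 0}^r$ with $\vec m'\leq \vec m$ componentwise and $|\vec m'|_1=\lfloor s-d\rfloor$, obtained by decrementing entries of $\vec m$ until the target sum is reached. Then $A\vec m'\in \Exp(\mfa^{\lfloor s-d\rfloor})$, while $\vec w - A\vec m' = A(s\vec u - \vec m')$ has all coordinates of $s\vec u-\vec m'$ nonnegative, so it is a nonnegative $\R$-combination of the $\vec a_i$, each of which lies in $\sigma^\vee$. Hence $\vec w\in \Exp(\mfa^{\lfloor s-d\rfloor})+\sigma^\vee$. The edge case $\lfloor s-d\rfloor\leq 0$ is trivial via the convention $\mfa^n=R$ for $n\leq 0$ together with $0\in S$, which gives $sH\subset \sigma^\vee\subset S+\sigma^\vee$. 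The main obstacle is the sharp arithmetic step: one needs Carathéodory to replace the naive support bound $r$ (where $r$ can vastly exceed $d$) by $d+1$, and one needs the strict inequality on fractional parts together with the integrality of $|\vec m|_1$ to upgrade the resulting bound $|\vec m|_1 > s-d-1$ to $|\vec m|_1\geq \lfloor s-d\rfloor$ rather than only $\lfloor s-d-1\rfloor$.
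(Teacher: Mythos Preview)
Your proof is correct and follows essentially the same approach as the paper: part (a) by noting that generator exponents of $\mfa^n$ lie in $nH$, and part (b) via Carath\'eodory's theorem combined with the componentwise floor $\lfloor s\vec u\rfloor$ and the strict inequality $|\vec m|_1 > s-d-1$ forcing $|\vec m|_1 \geq \lfloor s-d\rfloor$. The only cosmetic difference is that the paper stops at $A\vec w$ with $|\vec w|_1 \geq \lfloor s-d\rfloor$ and uses $\mfa^{|\vec w|_1}\subset \mfa^{\lfloor s-d\rfloor}$ directly, while you decrement to a $\vec m'$ with $|\vec m'|_1$ exactly $\lfloor s-d\rfloor$; these are equivalent.
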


\begin{proof}
	\begin{enumerate}[(a)]
		\item The ideal $\mfa^n$ is generated by monomials of the form $x^{A\vec w}$, where $\vec w\in \N^r$ and $|\vec w|_1 = n$.  Letting $\vec u = \frac{1}{n}\vec w$, we have that $\vec u\in \Delta$ and so $A\vec w=nA\vec u\in nH$.  Therefore $\Exp(\mfa^n)\subset nH+\sigma^\vee.$
		
		\item On the other hand, if $\vec v\in sH$ and $s > 0$, then there exists $\vec u\in \Delta$ such that $\vec v = sA\vec u$.  Furthermore, by Carath\'eodory's theorem, we may assume that at most $d+1$ of the components of $\vec u$ are nonzero.  Let $\vec w\in\N^r$ be given by $w_i=\lfloor su_i\rfloor$.  Thus each component of $s\vec u - \vec w$ is nonnegative, and so $A(s\vec u - \vec w)$ is a nonnegative linear combinations of the vectors $\vec a_i$.  Since each $\vec a_i$ is in $\sigma^\vee$, so is $A(s\vec u - \vec w)$.  Hence
		\[\vec v = \vec v - A\vec w +A\vec w=  A(s\vec u-\vec w)+A\vec w\in A\vec w+\sigma^\vee.\]
		Since $|\vec w|_1 = \sum_i\lfloor su_i\rfloor>\sum_{i:u_i\neq 0}(su_i-1) \geq s-d-1$, we have that $|\vec w|_1\geq \lfloor s-d\rfloor$ and so $A\vec w\in\Exp(\mfa^{\lfloor s-d\rfloor})$.  Thus, we have that $sH\subset \Exp(\mfa^{\lfloor s-d\rfloor})+\sigma^\vee$, and so $sH\cap \overline{S} \subset \Exp(\mfa^{\lfloor s-d\rfloor}\overline{R})$.
	\end{enumerate}
\end{proof}

\subsection{Joint Hilbert-Kunz multiplicity} In \cite{Vraciu}, the author defined the joint Hilbert-Kunz multiplicity, $e_{HK}(\mfa^t;I),$ of two finite colength ideals $I, \mfa$ in a local ring $(R, \mfm)$ of dimension $d \geq 1.$ Letting $t \in \mathbb{R}_{> 0}$ be a fixed real number,   
\[ e_{HK}(\mfa^t; I) = \lim\limits_{e \rightarrow \infty} \frac{1}{p^{ed}} \lambda_R \left(\frac{R}{\mfa^{\lceil tp^e \rceil}I^{[p^e]}}\right). \]
It is known that the Hilbert-Kunz multiplicity of finite colength ideals in a semigroup ring can be computed as the volume of a finite union of rational polytopes due to work of Watanabe \cite{Watanabe} and Eto \cite{eto}. The following result proves that the same holds for the joint Hilbert-Kunz multiplicity. 

\begin{theorem}\label{joint HK volume}
	Given \Cref{setting - semigroup ring}, suppose $k$ is of prime characteristic $p>0$, and let $I$ and $\mfa$ be ideals in $R$ of finite colength, and let $t\in \R_{>0}$. If we set \[ \mathcal{P}_t = \sigma^\vee \setminus (\mathcal{B} + tH + \sigma^\vee),\] 
	then $e_{HK}(\mfa^t;I) = \operatorname{vol} \ \overline{\mathcal{P}_t}$, where $\overline{\mathcal{P}_t}$ is the closure of $\mathcal{P}_t$ in the Euclidean topology on $\mathbb{R}^d$ and $\operatorname{vol}$ denotes the usual Euclidean volume on $\mathbb{R}^d$.
\end{theorem}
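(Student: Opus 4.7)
The plan is to convert the length into a lattice-point count, apply Lemma~\ref{lem: working with hulls} to sandwich the exponent set of $\mfa^{\lceil tq\rceil}I^{[q]}$ between two rational polyhedra whose $1/q$-rescalings share the common $q\to\infty$ limit $\mathcal{B}+tH+\sigma^\vee$, and then invoke a standard Ehrhart-type Riemann-sum argument. First, since $\mfa^{\lceil tq\rceil}I^{[q]}$ is monomial,
\[ \ell\!\left(R/\mfa^{\lceil tq\rceil}I^{[q]}\right) = |S\setminus E_q|, \qquad E_q := \Exp(\mfa^{\lceil tq\rceil}I^{[q]}) = q\mathcal{B} + \Exp(\mfa^{\lceil tq\rceil}) + S, \]
the last equality because exponent sets of products of monomial ideals are Minkowski sums of the generating exponents, closed under translation by $S$. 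Finite colength of $\mfa$ and $I$ ensures that $\overline{\mathcal{P}_t}$ is a compact finite union of rational polytopes with well-defined Euclidean volume.

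Next, Lemma~\ref{lem: working with hulls}(a) gives $\Exp(\mfa^{\lceil tq\rceil})\subset \lceil tq\rceil H + \sigma^\vee$, and part~(b) applied with $s = \lceil tq\rceil + d$ gives $(\lceil tq\rceil + d)H \subset \Exp(\mfa^{\lceil tq\rceil}) + \sigma^\vee$. Translating by $q\mathcal{B}$ and absorbing redundant copies of $\sigma^\vee$ yields the sandwich
\[ q\mathcal{B} + (\lceil tq\rceil + d)H + \sigma^\vee \;\subset\; E_q + \sigma^\vee \;\subset\; q\mathcal{B} + \lceil tq\rceil H + \sigma^\vee. \]
Because $H\subset \sigma^\vee$, the map $s\mapsto sH + \sigma^\vee$ is inclusion-reversing, so after scaling by $1/q$ both envelopes $\mathcal{B} + \frac{\lceil tq\rceil + d}{q}H + \sigma^\vee$ and $\mathcal{B} + \frac{\lceil tq\rceil}{q}H + \sigma^\vee$ converge in Hausdorff distance to $\mathcal{B} + tH + \sigma^\vee$ as $q\to\infty$. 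Their complements in $\sigma^\vee$ therefore both converge to $\mathcal{P}_t$.

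Finally, apply the standard lattice-counting principle: for any compact region $A\subset V$ with piecewise-linear boundary, $\lim_{q\to\infty} q^{-d}|qA\cap \bar{S}| = \operatorname{vol}(A)$, and the same limit holds with $\bar{S}$ replaced by $S$ since $\bar{S}\setminus S$ is supported on finitely many proper polyhedral subsets of $\sigma^\vee$ and so has asymptotic density zero. Applied to the scaled complements above, this squeezes $q^{-d}|S\setminus E_q|$ between two sequences both tending to $\operatorname{vol}(\overline{\mathcal{P}_t})$. The main obstacle is boundary control: lattice points within an $\epsilon q$-shell of $\partial(q\mathcal{P}_t)$ number $O(\epsilon q^d)$ because $\partial\mathcal{P}_t$ consists of finitely many polyhedral pieces of dimension $\leq d-1$, and this shell contributes only $O(\epsilon)$ to $q^{-d}|S\setminus E_q|$; sending $\epsilon\to 0$ after the Riemann-sum limit completes the argument.
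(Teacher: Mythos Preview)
Your approach is essentially the same as the paper's: sandwich $\Exp(\mfa^{\lceil tq\rceil}I^{[q]})+\sigma^\vee$ between two polyhedral sets via Lemma~\ref{lem: working with hulls}, then pass to the limit with an Ehrhart-type lattice-point count. The paper's upper envelope uses an auxiliary $s>t$ and sends $s\to t^+$ at the end, whereas your upper envelope uses the specific scale $(\lceil tq\rceil+d)/q\to t^+$; these are the same idea packaged differently.

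There is one technical gap in the non-normal case. Your sandwich bounds $E_q+\sigma^\vee$, not $E_q$ itself, and the length you want is $|S\setminus E_q|$. In a normal semigroup ring one has $(E_q+\sigma^\vee)\cap S=E_q$, so the squeeze goes through verbatim; but when $S\subsetneq \bar S$ this identity can fail, and your density remark about $\bar S\setminus S$ controls only the discrepancy in lattice counting over a fixed region, not the discrepancy between $E_q$ and $(E_q+\sigma^\vee)\cap S$. The paper sidesteps this by first reducing to the normal case via \cite[Lemma~3.3]{Vraciu}, which gives $e_{HK}(\mfa^t;I)=e_{HK}(\mfa^t\bar R;I\bar R)$ from module-finiteness of $\bar R$ over $R$; inserting that one-line reduction at the start of your argument closes the gap and the rest stands.
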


\begin{proof}
	First note that it is enough to prove the statement when $R$ is a normal ring. Indeed, if $R$ is not normal, then observe that $\overline{R}$ is a finite $R$-module and taking $W = R \setminus \{0\},$ we have $W^{-1}R = W^{-1}\overline{R} = \Frac(R)$ implying that $e_{HK}(\mfa^t;I) = e_{HK}(\mfa^t \overline{R}; I \overline{R})$ (see \cite[Lemma~3.3]{Vraciu}).
	
	Let $g_1,\ldots,g_n$ be the generators of $\sigma^\vee$ and let $F_1,\ldots,F_\alpha$ be the cones divided by all the hyperplanes spanned by $g_1,\ldots,g_n$ and $\vec b+t\vec a_i$, where $b \in \mathcal{B}$ and $1 \leq i \leq r.$ Then for each $\vec b \in \mathcal{B},$ the closure of the complement of $(\vec b + tH + \sigma^\vee)$ in $\sigma^\vee$ is a finite union of convex sets $\overline{F_i \cap (\vec b + tH + \sigma^\vee)^c}.$ Hence,
	\begin{align*} 
		\mathcal{P}_t = \sigma^\vee \bigcap \left( \bigcap_{\vec b \in \mathcal{B}} \overline{\big(\vec b + tH + \sigma^\vee \big)^c} \right) 
		= \bigcap_{\vec b \in \mathcal{B}} \left( \bigcup_{i=1}^{\alpha}  \overline{F_i \cap \big( \vec b + tH + \sigma^\vee \big)^c} \right)
	\end{align*}
	is a finite union of rational polytopes, say $P_{t,1}, \ldots, P_{t,\beta},$ containing the origin such that for all $i \neq j,$ $P_{t,i} \cap P_{t,j}$ is a rational polytope of dimension $< d.$ Using the same arguments as in the proof of \cite[Theorem~2.2]{eto}, we get
	\begin{align} \label{vol}
		\operatorname{vol} \overline{\mathcal{P}_t} = \sum_{i=1}^{\beta} \operatorname{vol} P_{t,i} = \sum_{i=1}^{\beta} \lim\limits_{p^e \rightarrow \infty} \frac{\mid p^eP_{t,i} \cap \mathbb{Z}^d \mid}{p^{ed}} 
		= \lim\limits_{e \rightarrow \infty} \frac{\mid p^e\mathcal{P}_t \cap \mathbb{Z}^d \mid}{p^{ed}}.
	\end{align}

	In order to compute the joint Hilbert-Kunz multiplicity, we observe that if $x^{\vec v}\in \mfa^{\lceil tp^e\rceil}I^{[p^e]}$, then $\vec v\in A\vec w + p^e\mathcal{B}+\sigma^\vee$, where $\vec w\in\N^r$ and $|\vec w|_1 = \lceil tp^e\rceil$, so we have
	\[ \vec v\in \lceil tp^e\rceil H  + p^e\mathcal{B}+\sigma^\vee \subset p^e(tH+\mathcal{B}+\sigma^\vee) = \sigma^\vee\setminus p^e\mathcal{P}_t. \]  
	Therefore, if $\vec v\in p^e\mathcal{P}_t$, then $x^{\vec v}\notin\mfa^{\lceil tp^e\rceil}I^{[p^e]}$, and so $\lambda_R(R/\mfa^{\lceil tp^e \rceil} I^{[p^e]})\geq |p^e\mathcal{P}_t\cap\mathbb{Z}^d|$. 
	
	On the other hand, suppose that $s>t$, and $p^e$ is large enough that $\lfloor sp^e-d\rfloor\geq \lceil tp^e\rceil$.  If $\vec v\in S\setminus p^e\mathcal{P}_s$, then 
	\begin{align*}
		\vec v\in p^e\mathcal{B}+sp^eH+\sigma^\vee
		\subset&  \Exp(I^{[p^e]})+\Exp(\mfa^{\lfloor sp^e-d\rfloor})+\sigma^\vee \\
		=& \Exp(\mfa^{\lfloor sp^e-d\rfloor}I^{[p^e]})+\sigma^\vee 
		\subset \Exp(\mfa^{\lceil tp^e\rceil}I^{[p^e]})+\sigma^\vee.
	\end{align*}
	Therefore $x^{\vec v}\in \mfa^{\lceil tp^e\rceil}I^{[p^e]}$.  Hence $\lambda_R(R/\mfa^{\lceil tp^e \rceil} I^{[p^e]}) \leq |p^e\mathcal{P}_s\cap\mathbb{Z}^d|$.
	
	Thus, we have that, for any $s>t$ and $e\gg 0$, 
	\[|p^e\mathcal{P}_t\cap\mathbb{Z}^d|\leq \lambda_R(R/\mfa^{\lceil tp^e \rceil} I^{[p^e]})\leq  |p^e\mathcal{P}_s\cap\mathbb{Z}^d|\]
	and hence, for any $s > t$, 
	\[\lim_{e\to\infty} \frac{|p^e\mathcal{P}_t\cap\mathbb{Z}^d|}{p^{ed}}\leq e_{HK}(\mfa^t;I)\leq \lim_{e\to\infty} \frac{|p^e\mathcal{P}_s\cap\mathbb{Z}^d|}{p^{ed}}. \]
	From \eqref{vol}, we get $\operatorname{vol} \overline{\mathcal{P}_t}\leq e_{HK}(\mfa^t;I)\leq \lim_{s\to t^+} \operatorname{vol} \overline{\mathcal{P}_s} = \operatorname{vol} \overline{\mathcal{P}_t}.$
\end{proof}

\subsection{The adjusted closures in affine semigroup rings}

In this subsection we show that the calculation of the adjusted closure operations in affine semigroup rings are computable in terms of the convex geometry of the associated exponent sets. This is well-known in the setting of usual integral closure, see for example \cite[Prop.~1.4.6]{hunekeSwanson}.

\begin{theorem} \label{theorem - closures to convex geoemtry} Given \Cref{setting - semigroup ring}, suppose $k$ is of prime characteristic $p>0$, and let $t\in\R_{>0}$ and $x^{\vec v}\in R$ be a monomial.
	\begin{enumerate}[{\rm(a)}]
		\item \label{HY semigroup condition} If $x^{\vec v} \in I^{*\mfa^t}$, in particular if $x^{\vec v}\in \ifat$, then $\vec v+tH\subset \mathcal{B}+\sigma^\vee$.
		\item \label{Vraciu tight closure semigroup condition} If $x^{\vec v}\in {{}^{\mfa^t}I^*}$, in particular if $x^{\vec v}\in \atif$ then $\vec v+tH\subset\mathcal{B}+tH+\sigma^\vee$.
	\end{enumerate}
\end{theorem}

\begin{proof} 
	Suppose that there exists $x^{\vec c}\in R$, $s\geq 0$ such that for $p^e\gg 0$, $x^{\vec c}x^{p^e\vec v}\mfa^{\lceil tp^e\rceil}\subset I^{[p^e]}\mfa^{\lceil sp^e\rceil}$.  That is, 
	\[\vec c + p^e\vec v + \Exp(\mfa^{\lceil tp^e\rceil})\subset p^e\mathcal{B}+\Exp(\mfa^{\lceil sp^e\rceil})+\sigma^\vee.
	\]
	This implies that
	\begin{align*}\vec c + p^e\vec v + \lceil tp^e+d\rceil H &\subset 
		\vec c + p^e\vec v + \Exp(\mfa^{\lceil tp^e\rceil}) + \sigma^\vee \\
		&\subset p^e\mathcal{B}+\Exp(\mfa^{\lceil sp^e\rceil})+\sigma^\vee\\
		&\subset p^e\mathcal{B}+\lceil sp^e\rceil H+\sigma^\vee.
	\end{align*}
	Since $H\subset \sigma^\vee$, $\lceil sp^e\rceil H+\sigma^\vee\subset sp^e H +\sigma^\vee$.  Therefore, dividing through by $p^e$, we have that for $e\gg 0$,
	\[\frac{1}{p^e} \vec c + \vec v + \frac{\lceil tp^e+d\rceil}{p^e}H \subset \mathcal{B} + sH +\sigma^\vee.
	\]
	Now, $\mathcal{B}$ and $sH$ are each closed and bounded, hence compact, and therefore so is their sum $\mathcal{B} + sH$.  Therefore, since $\sigma^\vee$ is closed, so is $\mathcal{B}+sH+\sigma^\vee$.  Hence for any $\vec h\in H$, 
	\[\vec v + t\vec h = \lim_{e\to \infty}\left(\frac{1}{p^e}\vec c + \vec v + \frac{\lceil tp^e+d\rceil}{p^e}\vec h\right)\in \mathcal{B}+sH+\sigma^\vee.
	\]
	Therefore $\vec v+tH\subset \mathcal{B}+sH+\sigma^\vee$.  
	
	Taking $s=0$ shows that if $x^{\vec v}\in I^{*\mfa^t}$, then $\vec v+tH\subset \mathcal{B}+\sigma^\vee$, and the same holds if  $x^{\vec v}\in \ifat$, since $\ifat \subset I^{*\mfa^t}$.  Taking $s=t$ shows that if $x^{\vec v}\in {{}^{\mfa^t}I^*}$, then $\vec v+tH\subset \mathcal{B}+tH+\sigma^\vee$, and the same holds if $x\in \atif$, since $\atif\subset {{}^{\mfa^t}I^*}$.
\end{proof}

\begin{theorem}\label{theorem - convex geoemtry to closures}  Given \Cref{setting - semigroup ring}, suppose $k$ is of prime characteristic $p>0$, and let $t \in \mathbb{R}_{> 0}$ and $x^{\vec v}\in R$ be a monomial.
	\begin{enumerate}[{\rm(a)}]
		\item If $\vec v+tH\subset \mathcal{B}+\sigma^\vee$, then $x^{\vec v}\in (I\overline{R})^{F\mfa^t}$ and $x^{\vec v}\in (I\overline{R})^{*\mfa^t}$.
		\item If $\vec v + tH\subset \mathcal{B}+tH+\sigma^\vee$, then $x^{\vec v}\in {{}^{\mfa^t}(I\overline{R})^*}$.
		\item If $\vec v + tH\subset \mathcal{B}+sH+\sigma^\vee$ for some $s>t$, then $x^{\vec v}\in {}^{\mfa^t}(I\overline{R})^F$.
	\end{enumerate}
	
\end{theorem}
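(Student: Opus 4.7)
The plan is to translate each hypothesis on convex hulls into a containment of exponent sets in $\bar{R}$ and interpret it as membership in the appropriate adjusted closure of $I\bar{R}$, using that $\bar{R}$ is a domain (so any $x^{\vec c}$ with $\vec c\in\bar{S}$ lies in $\bar{R}^\circ$). Throughout, I will combine both parts of \Cref{lem: working with hulls} with the convexity identity $(\alpha+\beta)H=\alpha H+\beta H$ for $\alpha,\beta\ge 0$ and the inclusion $H\subset\sigma^\vee$.

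For part (a), I would scale the hypothesis $\vec v+tH\subset \mathcal{B}+\sigma^\vee$ by $p^e$ to get $p^e\vec v+tp^eH\subset p^e\mathcal{B}+\sigma^\vee$, then observe that $\lceil tp^e\rceil H = tp^eH + (\lceil tp^e\rceil - tp^e) H\subset tp^eH+\sigma^\vee$. Combined with $\Exp(\mfa^{\lceil tp^e\rceil})\subset \lceil tp^e\rceil H+\sigma^\vee$ from \Cref{lem: working with hulls}(a), this yields $p^e\vec v+\Exp(\mfa^{\lceil tp^e\rceil})\subset p^e\mathcal{B}+\sigma^\vee = \Exp(I^{\fbp{p^e}}\bar{R})$, which is exactly the statement that $x^{\vec v}\in (I\bar{R})^{F\mfa^t}\subset (I\bar{R})^{*\mfa^t}$.

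For part (c), the strict inequality $s>t$ provides enough slack to avoid a multiplier. Writing a typical generator exponent of $\mfa^{\lceil tp^e\rceil}$ as $\lceil tp^e\rceil\vec h$ with $\vec h\in H$, and applying the hypothesis to write $\vec v+t\vec h=\vec b+s\vec h'+\vec w$ with $\vec b\in\mathcal{B}$, $\vec h'\in H$, $\vec w\in\sigma^\vee$, I would rewrite
\[p^e\vec v+\lceil tp^e\rceil\vec h = p^e\vec b + sp^e\vec h' + p^e\vec w + (\lceil tp^e\rceil - tp^e)\vec h.\]
The last two summands lie in $\sigma^\vee$ since $H\subset\sigma^\vee$. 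By \Cref{lem: working with hulls}(b), $sp^e\vec h'\in \Exp(\mfa^{\lfloor sp^e-d\rfloor})+\sigma^\vee$, and for $e\gg 0$ (specifically once $(s-t)p^e\ge d+2$) one has $\lfloor sp^e-d\rfloor\ge \lceil tp^e\rceil$, so this summand in fact lies in $\Exp(\mfa^{\lceil tp^e\rceil})+\sigma^\vee$. Varying $\vec h$ over $H$ and invoking \Cref{lem: working with hulls}(a) on the left then gives $x^{p^e\vec v}\mfa^{\lceil tp^e\rceil}\bar{R}\subset I^{\fbp{p^e}}\mfa^{\lceil tp^e\rceil}\bar{R}$ for $e\gg 0$, i.e., $x^{\vec v}\in{}^{\mfa^t}(I\bar{R})^F$.

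The main obstacle will be part (b), where $s=t$ leaves no slack and the analogous computation becomes
\[p^e\vec v+\lceil tp^e\rceil\vec h = p^e\vec b + \lceil tp^e\rceil\vec h' + p^e\vec w + (\lceil tp^e\rceil - tp^e)(\vec h-\vec h'),\]
which has two defects: (i) \Cref{lem: working with hulls}(b) only places $\lceil tp^e\rceil \vec h'$ in $\Exp(\mfa^{\lceil tp^e\rceil - d-1})+\sigma^\vee$, short by the shift $d+1$; and (ii) the error $(\lceil tp^e\rceil - tp^e)(\vec h-\vec h')$ has Euclidean norm at most $\operatorname{diam}(H)$ but is not a priori in $\sigma^\vee$. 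I plan to absorb both by choosing a single multiplier $x^{\vec c}$ with $\vec c=\vec c_1+\vec c_2\in\bar{S}$, where $\vec c_1\in \Exp(\mfa^{d+1})$ compensates for (i) via $\Exp(\mfa^{d+1})+\Exp(\mfa^{\lceil tp^e\rceil - d-1})\subset\Exp(\mfa^{\lceil tp^e\rceil})$, and $\vec c_2$ is a sufficiently large integer multiple of any lattice point in the interior of $\sigma^\vee$ so that $\vec c_2 + B(0,\operatorname{diam}(H))\subset\sigma^\vee$ (such $\vec c_2$ exists because $S$ generates $M$ as a group, forcing $\sigma^\vee$ to have nonempty interior, and any interior point can be scaled arbitrarily far from the boundary). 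With this $\vec c$, the same computation yields $x^{\vec c}x^{p^e\vec v}\mfa^{\lceil tp^e\rceil}\bar{R}\subset I^{\fbp{p^e}}\mfa^{\lceil tp^e\rceil}\bar{R}$ for $e\gg 0$, proving $x^{\vec v}\in {}^{\mfa^t}(I\bar{R})^*$.
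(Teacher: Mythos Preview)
Your argument is correct and follows the same overall strategy as the paper: scale the convex-hull hypothesis by $p^e$, pass between $\Exp(\mfa^n)$ and $nH$ via \Cref{lem: working with hulls}, and absorb the resulting shift with a multiplier from a power of $\mfa$. Parts (a) and (c) match the paper's proof almost exactly, differing only in that you track an individual point $\vec h\in H$ while the paper works with the whole set $tqH$ at once.

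The one substantive difference is in part (b). By working pointwise you are forced to compare $\lceil tp^e\rceil\vec h$ on the left with $\lceil tp^e\rceil\vec h'$ on the right, producing the error term $(\lceil tp^e\rceil - tp^e)(\vec h-\vec h')$ which need not lie in $\sigma^\vee$; you then correctly repair this with a second multiplier $\vec c_2$ deep in the interior of $\sigma^\vee$. The paper avoids this entirely by staying set-theoretic: from $\Exp(\mfa^{\lceil tq\rceil})\subset \lceil tq\rceil H+\sigma^\vee\subset tqH+\sigma^\vee$ (using $H\subset\sigma^\vee$), one applies the hypothesis to the \emph{set} $tqH$ to land in $q\mathcal{B}+tqH+\sigma^\vee$, and then \Cref{lem: working with hulls}(b) gives $tqH\subset \Exp(\mfa^{\lfloor tq-d\rfloor})+\sigma^\vee$. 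No $\vec h-\vec h'$ ever appears, so a single multiplier $x^{\vec c}\in\mfa^{d+1}$ suffices to bump $\lfloor tq-d\rfloor$ up to $\lceil tq\rceil$. Your $\vec c_2$ trick is a nice observation about full-dimensional cones, but it is avoidable here; the paper's set-level argument is shorter and needs only the one multiplier.
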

\begin{proof}
	Let $s\geq 0$ be a real number, and suppose that $\vec v+tH\subset\mathcal{B}+sH+\sigma^\vee$.  Now, for any $e\geq 0$, we have that 
	\[p^e\vec v+\Exp(\mfa^{\lceil tp^e\rceil})\subset p^e\vec v+tp^eH+\sigma^\vee \subset p^e\mathcal{B}+sp^eH+\sigma^\vee \subset p^e\mathcal{B}+\Exp(\mfa^{\lfloor sp^e-d\rfloor})+\sigma^\vee.
	\]
	Note that $(p^e\mathcal{B}+\Exp(\mfa^{\lfloor sp^e-d\rfloor})+\sigma^\vee)\cap \overline{S} = \Exp(I^{[p^e]}\mfa^{\lfloor sp^e-d\rfloor}\overline{R})$.
	
	Suppose $s=0$, so that we have $p^e\vec v+\Exp(\mfa^{\lceil tp^e\rceil})\subset \Exp(I^{[p^e]}\overline{R})$, and so $x^{p^e\vec v}\mfa^{\lceil tp^e\rceil}\subset I^{[p^e]}\overline{R}$.  Hence, if $\vec v + tH\subset \mathcal{B}+\sigma^\vee$, then $x^{\vec v}\in (I\overline{R})^{F\mfa^t}\subset (I\overline{R})^{*\mfa^t}$.
	
	Suppose $s=t$, and choose $x^{\vec c}\in \mfa^{d+1}$.  Now, for $e\geq 0$, we have that 
	\[\vec c + p^e\vec v+\Exp(\mfa^{\lceil tp^e\rceil})\subset \Exp(x^{\vec c}I^{[p^e]}\mfa^{\lfloor tp^e-d\rfloor}) \subset \Exp(I^{[p^e]}\mfa^{\lfloor tp^e+1\rfloor})\subset \Exp(I^{[p^e]}\mfa^{\lceil tp^e\rceil}).
	\]
	Therefore $x^{\vec c}x^{p^e\vec v}\mfa^{\lceil tp^e\rceil}\subset I^{[p^e]}\mfa^{\lceil tp^e\rceil}$ for all $e\geq 0$.  Hence, if $\vec v + tH\subset \mathcal{B}+tH+\sigma^\vee$, then $x^{\vec v}\in {{}^{\mfa^t}(I\overline{R})^*}$.
	
	Suppose $s>t$, so that for $e\gg0$, we have that $sp^e-d \geq \lceil tp^e\rceil$, and so $ \Exp(I^{[p^e]}\mfa^{\lfloor sp^e-d \rfloor}\overline{R})\subset \Exp(I^{[p^e]}\mfa^{\lceil tp^e\rceil}\overline{R})$.  Therefore, \[p^e\vec v+\Exp(\mfa^{\lceil tp^e\rceil})\subset\Exp(I^{[p^e]}\mfa^{\lfloor sp^e-d \rfloor}\overline{R})\subset \Exp(I^{[p^e]}\mfa^{\lceil tp^e\rceil}\overline{R}).\]  Hence, if $\vec v + tH\subset \mathcal{B}+sH+\sigma^\vee$ for some $s>t$, then $x^{\vec v}\in {}^{\mfa^t}(I\overline{R})^F$.
\end{proof}

\begin{corollary}\label{corollary - normal semigroup} Given \Cref{setting - semigroup ring}, if $k$ is of prime characteristic $p>0$ and $R=k[S]$ is a normal semigroup ring, for a monomial $x^{\vec v}\in R$, we have
	\begin{enumerate}[{\rm(a)}]
		\item $x^{\vec v}\in \ifat$ if and only if $x^{\vec v} \in I^{*\mfa^t}$ if and only if $\vec v + tH\subset \mathcal{B}+\sigma^\vee$,
		\item $x^{\vec v}\in {{}^{\mfa^t}I^*}$ if and only if $\vec v + tH\subset \mathcal{B}+tH+\sigma^\vee$,
		\item \label{item - atif implies v+tH} if $x^{\vec v}\in \atif$ then $\vec v + tH\subset \mathcal{B}+tH+\sigma^\vee$, and 
		\item if there exists $s>t$ such that $\vec v + tH\subset \mathcal{B}+sH+\sigma^\vee$, then $x^{\vec v}\in \atif$.
	\end{enumerate}
	
\end{corollary}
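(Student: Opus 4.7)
The plan is to assemble this corollary as a direct consequence of the two preceding theorems: the unnamed theorem providing necessary conditions for membership in each adjusted closure, and \Cref{theorem - convex geoemtry to closures} providing sufficient conditions. The key observation enabling the whole argument is that when $R=k[S]$ is a normal semigroup ring, $S = \bar S$ and hence $R = \bar{R}$, so the expressions $(I\bar{R})^{F\mfa^t}$, $(I\bar{R})^{*\mfa^t}$, ${{}^{\mfa^t}(I\bar{R})^*}$, and ${}^{\mfa^t}(I\bar{R})^F$ appearing in \Cref{theorem - convex geoemtry to closures} literally coincide with their $R$-counterparts $\ifat$, $I^{*\mfa^t}$, ${{}^{\mfa^t}I^*}$, and $\atif$ respectively.

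For part (a), I would first combine the standard inclusion $\ifat \subset I^{*\mfa^t}$ (from Subsection 3.4) with the implication from the unnamed theorem that $x^{\vec v} \in I^{*\mfa^t}$ forces $\vec v + tH \subset \mathcal{B} + \sigma^\vee$; this yields two of the three arrows in the chain of equivalences. The remaining arrow is \Cref{theorem - convex geoemtry to closures}(a), which produces $x^{\vec v} \in (I\bar{R})^{F\mfa^t} = \ifat$ as soon as $\vec v + tH \subset \mathcal{B} + \sigma^\vee$. Part (b) is analogous: the forward direction is the Vraciu-tight portion of the unnamed theorem, and the reverse direction is \Cref{theorem - convex geoemtry to closures}(b), both under the $R = \bar{R}$ identification.

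Finally, part (c) is just the Vraciu-Frobenius portion of the unnamed theorem restated, while part (d) is \Cref{theorem - convex geoemtry to closures}(c) restated after again replacing $(I\bar{R})$ with $I$. There is no genuine obstacle in the argument: the real work has already been carried out in the two preceding theorems, so the present proof is essentially a bookkeeping corollary that records which implications combine and notes that normality of $S$ removes the $\bar R$ from the conclusions of \Cref{theorem - convex geoemtry to closures}. The only point requiring any care is remembering that the one-sided ``if'' of (d) cannot be upgraded to an ``only if'' because the Vraciu Frobenius closure is the one closure whose sufficient geometric criterion needs a strict inequality $s > t$, and this asymmetry matches the asymmetry already present in (c) versus (d).
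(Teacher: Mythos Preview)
Your proposal is correct and matches the paper's intent exactly: the paper states this result as a corollary with no written proof, treating it as immediate from the two preceding theorems once one notes that normality gives $R=\bar{R}$. Your identification of which implication in each preceding theorem supplies each direction, and your observation about why (c) and (d) remain one-sided, are precisely the bookkeeping the reader is expected to do.
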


\begin{remark}  The converse of \Cref{corollary - normal semigroup}(c) does not hold in general, that is, the Vraciu $\mfa^t$-tight and Frobenius closures may differ.  For example, let $k$ be a field of odd prime characteristic $p>0$, $R=k[x,y]$, $I=\mfa=(x^2,y^2)$, and $t=1$, so $tH=\mathrm{Hull}((2,0),(0,2))$.  Then, $(1,1)+tH  = \mathrm{Hull}((3,1),(1,3))$, and $\{(2,0),(0,2)\} + tH = \mathrm{Hull}((4,0),(0,4))$.  Therefore $(1,1)+tH\subset \{(2,0),(0,2)\} + tH$, demonstrating $xy\in \atistar$.  However, $xy\notin \atif$; for any $e\geq 0$, we have that $x^{3p^e}y^{p^e}\in (xy)^{p^e}\mfa^{p^e}$.  But, this monomial is not in $I^{[p^e]}\mfa^{p^e}$, since all generators of the latter are of degree $4p^e$ but have only even exponents.
	
	Also, the normality condition in \Cref{corollary - normal semigroup} is necessary for the implications coming from \Cref{theorem - convex geoemtry to closures}.  To see this, let $R=k[x^2,x^3,y,xy]$.  Note that $R$ contains all monomials in $k[x,y]$ except $x$.  Now, let $I=(x^2y)$, $\mfa = (x^2,x^3)$, and $t=1$.  This makes $H=\mathrm{Hull}((2,0),(3,0))$, and so 
	\[(0,1)+tH = \mathrm{Hull}((2,1),(3,1))\subset (2,1)+\sigma^\vee = \Exp(I)+\sigma^\vee.\]
	Therefore, if \Cref{corollary - normal semigroup} were to hold without the normality assumption, we would have that $y\in \ifat$.  However, for any $e\geq 0$ we have that
	\[y^{p^e}\mfa^{p^e} = (x^{2a+3b}y^{p^e}\mid a+b=p^e) = (x^{2p^e}y^{p^e},x^{2p^e+1}y^{p^e})
	\]
	but $I^{[p^e]} = (x^{2p^e}y^{p^e})$.
	Since $x\notin R$, $x^{2p^e+1}y^{p^e}\notin I^{[p^e]}$.  Therefore $y\notin \ifat$.
\end{remark}

Independent of the characteristic of $k$, we also get similar convex geometry statements for the $\mfa^t$-integral closure.
These statements can be seen as restatements of \Cref{valuative criterion for integral closure:label}, as the defining equations of the convex hull of the exponent set of $I$ exactly correspond to the Rees valuations of $I$ (see \cite[Theorem~A]{BDHM}).

\begin{theorem}\label{thm: at integral closure convex geometry}
	Given \Cref{setting - semigroup ring}, if $x^{\vec v} \in \overline{I}^{\mfa^t}$ then $\vec v+tH \subset \operatorname{Hull} (\operatorname{Exp} I)$, and if $k[S]$ is normal then the converse holds.
\end{theorem}

\begin{proof}
	The proof is similar to \Cref{theorem - convex geoemtry to closures}, but we provide it for completeness. For simplicity, write $E = \operatorname{Hull}(\operatorname{Exp} I)$. 
	
	First, suppose $x^{\vec v} \in \overline{I}^{\mfa^t}$. So, there is a $\vec c \in S$ with $x^{\vec c} x^{n\vec v} \mfa^{\lceil tn \rceil} \subset I^n$ for all $n \gg 0$. In vector form, this is equivalent to $\vec c + n \vec v + \operatorname{Exp}(\mfa^{\lceil nt \rceil}) \subset \operatorname{Exp}(I^n)$. By \Cref{lem: working with hulls}, we have 
	\begin{align*}
		\vec c + n \vec v + \lceil tn+d\rceil H &\subset  \vec c + n \vec v + \operatorname{Exp}(\mfa^{\lceil tn \rceil}) + \sigma^\vee \\
		&\subset  \operatorname{Exp}(I^n)+\sigma^\vee \\
		&\subset  nE + \sigma^\vee = nE \end{align*} since $nE + \sigma^\vee = nE$. Then, given $\vec c + n\vec v + \lceil tn+d \rceil H \subset nE$, we divide both sides by $n$, viewed as subsets of $V = M\otimes_\mathbb{Z} \mathbb{R}$ to get \[\dfrac{\vec c}{n} + \vec v + \dfrac{\lceil tn+d \rceil}{n} H \subset E.\] As $E$ is a closed subset of $\mathbb{R}^d$ in the usual Euclidean topology, we may take the limit in $n$ and stay inside $E$, that is, $\vec v+tH \subset E$. 
	
	Now, suppose $k[S]$ is normal and $\vec v + tH \subset E$, and we want to show $x^{\vec v} \in \overline{I}^{\mfa^t}$. For some $n \in \mathbb{N}$, let $\vec a \in \operatorname{Exp}(\mfa^{\lceil nt\rceil})$ be such that $x^{\vec a}$ is a nonzero monomial generator of $\mfa^{\lceil tn \rceil}$. Then, there is a $\vec w$ in $\mathbb{N}^r$ such that $|\vec w|_1 = \lceil tn \rceil$ and $\vec a = A\vec w$. Also, fix $\vec c \in \operatorname{Exp}(I^\ell)$, for $\ell = \mu(I)-1$. We then have \begin{align*}
		\vec c + n\vec v + \vec a = & n\left(\dfrac{\vec c}{n} + \vec v + \dfrac{\vec a}{n} \right)\\ 
		=& n\left(\dfrac{\vec c}{n} + \vec v + \dfrac{1}{n} A \vec w \right) \\
		=& n\left(\dfrac{\vec c}{n} + \vec v + \dfrac{t}{\lceil tn\rceil} A \vec w + \left( \dfrac{1}{n} - \dfrac{t}{\lceil tn \rceil}\right)A\vec w \right).
	\end{align*}
	
	Notably, the term $\vec v + (t/\lceil tn\rceil) A\vec w$ is inside $\vec v + tH \subset E$ and the term $(1/n - t/\lceil tn \rceil) A\vec w$ is in $\sigma^\vee$ since $t/\lceil tn \rceil < t/tn = 1/n$. Thus, 
	\[\vec c + n \vec v+ \vec a\in\vec (c + nE + \sigma^\vee)\cap S \subset \vec c + (nE\cap S)\subset \vec c + \operatorname{Exp}(\overline{I^n}).\]  Finally, by the Brian{\c c}on-Skoda theorem, $\overline{I^n} \subset I^{n-\ell}$ for all $n$. This shows $\vec c + \operatorname{Exp}(I^{n-\ell}) \subset \operatorname{Exp}(I^n)$ by our choice of $\vec c$, demonstrating that $x^{\vec v} \in \overline{I}^{\mfa^t}$, as required. 
\end{proof}

\begin{remark}\label{rmk: independent of characteristic}
We note that the $\mfa^t$-tight and integral closure calculations above are entirely independent of characteristic, so for a fixed affine semigroup and ideal, these closure operations cannot vary as the characteristic of the underlying field varies. However, we demonstrate characteristic dependence in the $\mfa^t$-Frobenius closures in the next example.
\end{remark}

In the example below, the statements about the adjusted integral closures hold over fields of arbitrary characteristic $k$, but the statements involving the joint Hilbert-Kunz multiplicity and the adjusted tight and Frobenius closures require that $k$ is of prime characteristic $p>0$.

\begin{xmp} \label{MegaExample}
	Let $R=k[x,xy,x^2y^3] = k[S]$, where $S=\sigma^\vee\cap \Z^2$ (notably, a normal semigroup) and $\sigma^\vee$ is the cone generated by $(1,0)$ and $(2,3)$, which is dual to the cone $\sigma$ generated by $\vec c_1 = (0,1)$ and $\vec c_2 = (3,-2)$. Let $I=(x^4, x^2y^3)$ and $\mfa = (x^2,x^2y^3)$, so that $\vec b_1=(4,0)$, $\vec b_2=(2,3)$, $\vec a_1 = (2,0)$, and $\vec a_2= (2,3)$. 
	
	The joint Hilbert-Kunz multiplicity as a function of $t$ is given below.
	\[e_{HK}(\mfa^t;I) = 
	\begin{cases}
		6t^2+12 & \text{ if } t < 1\\
		3t^2+6t+9 & \text{ if } t\geq 1
	\end{cases}\]
	The Hara-Yoshida $\mfa^t$-integral closures are given below.
	\begin{align} \label{meg:int}
		\overline{I}^{\mfa^t}= 
		\begin{cases}
			I_1:=\overline{I} = (x^4,x^4y,x^3y^2, x^2y^3) &\text{ if } t < \frac{1}{6}\\
			I_2:=I_1 + (x^3y) = (x^4,x^3y,x^3y^2, x^2y^3)  &\text{ if } \frac{1}{6}\leq t< \frac{1}{3}\\
			I_3:=I_2 + (x^2y^2) = (x^4, x^3y, x^2y^2, x^2y^3) & \text{ if } \frac{1}{3}\leq t< \frac{1}{2}\\
			I_4:= I_3 + (x^3) = (x^3, x^3y, x^2y^2, x^2y^3) & \text{ if }\frac{1}{2}\leq t < \frac{2}{3}\\
			I_5:= I_4 + (x^2y) = (x^3, x^2y, x^2y^2, x^2y^3)& \text{ if }\frac{2}{3}\leq t < 1\\
			I_6:=I_5+(x^2) = (x^2, x^2y, x^2y^2, x^2y^3)&\text{ if }1\leq t<\frac{7}{6}\\
			I_7:=I_6+(xy) = (x^2, xy, x^2y^3) & \text{ if }\frac{7}{6}\leq t < \frac{3}{2}\\
			I_8:=I_7+(x) = (x, xy, x^2y^3) & \text{ if }\frac{3}{2}\leq t < 2\\
			I_9:=I_8+(1) = (1) &\text{ if } t \geq 2
		\end{cases}
	\end{align}
	The Hara-Yoshida $\mfa^t$-tight and $\mfa^t$-Frobenius closures are given below.
	\begin{align}  \label{meg:HY}
		\istarat = \ifat= 
		\begin{cases}
			J_1:=I = (x^4, x^2y^3) &\text{ if } t < \frac{1}{2}\\
			J_2:=J_1 + (x^5y^2) = (x^4,x^5y^2, x^2y^3)  &\text{ if } \frac{1}{2}\leq t< 1\\
			J_3:=J_2 + (x^4y, x^4y^2) = (x^4, x^4y, x^4y^2, x^2y^3) & \text{ if } 1\leq t< \frac{3}{2}\\
			J_4:= J_3 + (x^3,x^3y,x^3y^2) = (x^3, x^3y, x^3y^2, x^2y^3) & \text{ if }\frac{3}{2}\leq t < 2\\
			J_5:= J_4 + (x^2,x^2y,x^2y^2) = (x^2, x^2y, x^2y^2, x^2y^3)& \text{ if }2\leq t < \frac{5}{2}\\
			J_6:=J_5+(x,xy) = (x, xy, x^2y^3)&\text{ if }\frac{5}{2}\leq t<3\\
			J_7:=J_6+(1) = (1) & \text{ if }t\geq 3
		\end{cases}
	\end{align}
	The Vraciu $\mfa^t$-tight closures are given below.
	\begin{align} \label{meg:Vtight}
		\atistar = 
		\begin{cases}
			K_1 := I = (x^4,x^2y^3) &\text{ if } t < \frac{1}{2}\\
			K_2 := K_1 + (x^5y^2) = (x^4,x^5y,x^2y^3) &\text{ if } \frac{1}{2}\leq t< 1\\
			K_3 := K_2 + (x^4y,x^4y^2) = (x^4,x^4y,x^4y^2,x^2y^3) &\text{ if } t\geq 1
		\end{cases}
	\end{align}
	Finally, the Vraciu $\mfa^t$-Frobenius closures are given below.
	\begin{align} \label{meg:VFrob}
		\atif = 
		\begin{cases}
			K_1 = I &\text{ if } t <\frac{1}{2}\\
			K_2  = K_1+(x^5y^2) =(x^4,x^5y^2, x^2y^3) & \text{ if } t\geq \frac{1}{2}, \text{ and } t<1 \text{ or } p\neq 3\\
			K_3  = K_2+(x^4y,x^4y^2) = (x^4,x^4y,x^4y^2,x^2y^3) & \text{ if } t\geq 1 \text{ and } p = 3 
		\end{cases}
	\end{align}
\end{xmp}

We defer the details of this computation to the appendix below.

\changelocaltocdepth{1}

\begin{appendices}
\section{Appendix: Calculating the closures in affine semigroup rings}

In this section we provide the calculations behind \Cref{MegaExample}, which demonstrate the ways in which these closures and multiplicity may be computed.  We use the data of the linear algebra of the cone $\sigma^\vee$ and the generators of $I$ and $\mfa$ to determine when a monomial $x^{\vec v}$ is in the various $\mfa^t$-closures of $I$. We first handle the joint Hilbert-Kunz multiplicity and the $\mfa^t$-integral closure, as those are simpler to calculate.

We continue to assume that we are in \Cref{setting - semigroup ring}, and in particular note that $R=k[S]$ where $S\subset \Z^d$ is a semigroup, $\sigma^\vee=\mathrm{cone}(S)$, and $\vec c_1,\ldots, \vec c_n\in\Z^d$ vectors such that $\vec v\in \sigma^\vee$ if and only if $\langle \vec c_i,\vec v\rangle \geq 0$ for all $i=1,\ldots, n$. 
Let  $C=(\vec c_1|\vec c_2|\cdots|\vec c_n)$ be the matrix whose columns are the vectors $\vec c_i$, which means that a vector $\vec v$ is in $\sigma^\vee$ if and only if $C^T\vec v\geq \vec 0$ componentwise. 

Let $\mathcal{A}=\{\vec a_1,\ldots,\vec a_r\}, \mathcal{B}=\{\vec b_1,\ldots, \vec b_n\}\subset S$, $\mfa = (x^{\vec a} \mid \vec a\in \mathcal{A})$, and $I=(x^{\vec b}\mid \vec b\in \mathcal{B})$. 
Let $A=(\vec a_1|\cdots |\vec a_r)$, and let $\Delta = \{\vec u\in\R_{\geq 0}^r\mid |\vec u|_1=1\}$, where $|\vec u|_1$ is the sum of (the absolute values of) the components of $\vec u$.  Let $H=\mathrm{Hull}(\vec a_1,\ldots,\vec a_r)$ be the convex hull of the columns of $A$, and notice that $H=\{A\vec u\mid \vec u\in \Delta\}$.

	\subsection{The joint Hilbert-Kunz multiplicity}
	
	By \Cref{joint HK volume}, we can compute $e_{HK}(\mfa^t;I)$ by finding the area of a certain region in the plane.  In the diagrams below, the cone $\sigma^\vee$ is bounded by the two rays meeting at the origin; the  vertical line segment joining $\vec a_1$ and $\vec a_2$ is $H$, the convex hull of the exponent vectors of the generators of $\mfa$, and the other vertical line segments are the sets $\vec b_1 + tH$ and $\vec b_2+tH$, for $t<1$ on the left, and for $t\geq 1$ on the right.  The shaded regions are the sets $\mathcal{B}+tH+\sigma^\vee$.   The joint Hilbert-Kunz multiplicity is, therefore, the area of the region inside the cone but outside the shaded region, which can be readily computed from the diagrams.
	
	\def\dotwidth{0.05}
	\def\xmax{8}
	\def\ymax{7}

	\def\tval{0.7}
	
	\begin{tikzpicture}[box/.style = {draw,inner sep=1pt,rounded corners=5pt,thick}]
		\node (a) at (0,0) {\begin{tikzpicture}[scale = 0.8]
				\begin{scope}
					\clip (0,0) rectangle (\xmax,\ymax);
					\path[fill=lightgray] (10,0)--({4+2*\tval},0)-- ({4+2*\tval},{3*\tval}) -- (6,3) -- ({2+2*\tval},3)--({2+2*\tval},{3+3*\tval})--(10,15);
					\draw[thick] (10,0) -- (0,0) -- (10,15);
					\draw[dashed] (4,0) -- (6,3) -- (2,3);
					\draw[thick] ({4+2*\tval},0)-- ({4+2*\tval},{3*\tval})-- (6,3) -- ({2+2*\tval},3)--({2+2*\tval},{3+3*\tval});
				\end{scope}
				\foreach \x in {0,...,{\xmax}} {
					\foreach \y in {0,...,{\ymax}} {
						\draw[fill=black] (\x,\y) circle (\dotwidth);
					}
				}
				\node[below] at (4,0) {$\vec b_1$};
				\node[above left] at (2,3) {$\vec b_2,\vec a_2$};
				\node[below] at (2,0) {$\vec a_1$};
				\node[above] at (6,3) {$(6,3)$};
				\draw[fill=black] ({4+2*\tval},0) circle (\dotwidth) node[below] {$(4+2t,0)$};
				\draw[fill=black] ({4+2*\tval},{3*\tval}) circle (\dotwidth) node[above right] {$(4+2t,3t)$};
				\draw[fill=black] ({2+2*\tval},3) circle (\dotwidth) node[below] {$(2+2t,3)$};
				\draw[fill=black] ({2+2*\tval},{3+3*\tval}) circle (\dotwidth) node[above left] {$(2+2t,3+3t)$};
				\draw[ultra thick] (2,0)--(2,3) node[midway, right] {$H$};
		\end{tikzpicture}};
		\def\tval{1.2}
		\node (b) at (8,0) {\begin{tikzpicture}[scale = 0.8]
				\begin{scope}
					\clip (0,0) rectangle (\xmax,\ymax);
					\path[fill=lightgray] (10,0) -- ({4+2*\tval},0) -- ({4+2*\tval},3) -- ({2+2*\tval},3)--({2+2*\tval},{3+3*\tval})--(10,15);
					\draw[thick] (10,0) -- (0,0) -- (10,15);
					\draw[dashed] (6,3) -- (2,3);
					\draw[dashed] (4,0) -- ({4+2*\tval},{3*\tval});
					\draw[thick] ({4+2*\tval},0) -- ({4+2*\tval},{3*\tval});
					\draw[thick] ({4+2*\tval},0)-- ({4+2*\tval},3)-- ({2+2*\tval},3)--({2+2*\tval},{3+3*\tval});
				\end{scope}
				\foreach \x in {0,...,{\xmax}} {
					\foreach \y in {0,...,{\ymax}} {
						\draw[fill=black] (\x,\y) circle (\dotwidth);
					}
				}
				\node[below] at (4,0) {$\vec b_1$};
				\node[above left] at (2,3) {$\vec b_2,\vec a_2$};
				\node[below] at (2,0) {$\vec a_1$};
				
				\draw[fill=black] ({4+2*\tval},0) circle (\dotwidth) node[below] {$(4+2t,0)$};
				\draw[fill=black] ({4+2*\tval},3) circle (\dotwidth) node[above right] {$(4+2t,3)$};
				\draw[fill=black] ({2+2*\tval},3) circle (\dotwidth) node[below] {$(2+2t,3)$};
				\draw[fill=black] ({2+2*\tval},{3+3*\tval}) circle (\dotwidth) node[right] {$(2+2t,3+3t)$};
				\draw[ultra thick] (2,0)--(2,3) node[midway, right] {$H$};
		\end{tikzpicture}};
		\node[box,fit=(a)] {};
		\node[box,fit=(b)] {};
		\node at (8,-4) {$t\ge 1$};
		\node at (0,-4) {$0<t<1$};
	\end{tikzpicture}
	
	\subsection{Computing the integral closures}  
	
	A vector $\vec v$ is in the Newton polytope of $I$ if and only if $\langle \vec c_1,\vec v\rangle\geq 0$, $\langle \vec c_2,\vec v\rangle \geq 0$, and $\langle (3,2),\vec v\rangle \geq 12$, with the last inequality describing the half-plane with boundary through the vectors $\vec b_1$ and $\vec b_2$. Note that since the generators of $I$ lie on the facets of the cone $\sigma^\vee$, a vector in $\sigma^\vee$ is on the Newton polytope of $I$ if and only if $\langle (3,2),\vec v\rangle \geq 12$, equivalently $3v_1+2v_2 \geq 12$.
	
	The integral closure of $I$ is generated by monomials in the Newton polytope, and so we have that, for all $t$,
	\[\overline{I}= (x^4,x^4y,x^3y^2, x^2y^3).\]
	
	A monomial $x^{v_1}y^{v_2}$ is in $\overline{I}^{\mfa^t}$ if and only if $\vec v+tH$ is contained in the Newton polytope, where $H$ is the convex hull of $\vec a_1$ and $\vec a_2$.  Since the Newton polytope is convex, it suffices to check that $\vec v+t\vec a_1$ and $\vec v+t\vec a_2$ are in the Newton polytope.  Thus, $x^{\vec v}\in \overline{I}^{\mfa^t}$ if and only if the following hold.
	\begin{align*}
		\langle \vec (3,2),\vec v+t\vec a_1\rangle&\geq 12 &&\text{ equivalently, }& 3v_1+ 2v_2+6t&\geq 12\\
		\langle \vec (3,2),\vec v + t\vec a_2\rangle&\geq 12 &&\text{ equivalently, }& 3v_1+2v_2+12t&\geq 12.
	\end{align*}
	Therefore $x^{\vec v}\in \overline{I}^{\mfa^t}$ if and only if $3v_1+ 2v_2+6t\geq 12$, i.e.\ $t\geq \frac{12-3v_1-2v_2}{6}$.  For each of the eight monomials in $R$ outside $\overline{I}$, we can compute the value of $t$ which puts that monomial into $\overline{I}^{\mfa^t}$ and describe this ideal for all $t$.
	\[ 
	\begin{array}{|c|c|c||c|c|c|}
		\hline
		x^{\vec v} &  12 - 3v_1-2v_2   & t \text{ such that }x^{\vec v}\in \overline{I}^{\mfa^t} & x^{\vec v} &  12 - 3v_1-2v_2   & t \text{ such that }x^{\vec v}\in \overline{I}^{\mfa^t}\\
		\hline
		x^3y & 1 & t \geq 1/6 &x^2 & 6 &  t\geq 1 \\
		x^2y^2 & 2 & t\geq 1/3 &xy & 7 &t \geq 7/6\\
		x^3 & 3 & t \geq 1/2 & x &  9 & t\geq 3/2 \\
		x^2y & 4 &  t\geq 2/3&1 & 12 & t \geq 2\\		
		\hline
	\end{array}
	\]
	This proves \eqref{meg:int} in Example \ref{MegaExample}.

	\subsection{Computing the tight and Frobenius closures}

To use \Cref{theorem - closures to convex geoemtry} and \Cref{theorem - convex geoemtry to closures}, we will rewrite the condition $\vec v + tH \subset \mathcal{B} + sH +\sigma^{\vee}$ in terms of the components of the vectors.

For any $s\geq 0$, $t \in \mathbb{R}_{> 0}$, and $\vec v\in S$, $\vec v + tH \subset \mathcal{B}+sH+\sigma^\vee$ if and only if for all $\vec u\in \Delta$ there exists $\vec w\in \Delta$ and $i$ such that $\vec v + tA\vec u \subset \vec b_i+sA\vec w+\sigma^\vee$, that is,
\begin{equation}\label{eq:general matrix intequality}
\vec v + tH \subset \mathcal{B}+sH+\sigma^\vee\quad \iff \forall \vec u\in\Delta \; \exists \vec w\in \Delta\;\exists i\;\text{s.t.}\;C^T(\vec v -\vec b_i+A(t\vec u -s\vec w))\geq \vec 0
\end{equation}
For our calculations, we will focus on the case $\mu(\mfa)=2$. Suppose that $A=(\vec a_1\mid \vec a_2)$, and hence $\Delta = \{(1-\gamma,\gamma)^T\mid \gamma\in [0,1]\}$.  Now, for $\vec u =(1-\gamma,\gamma)^T\in \Delta$, $\vec w = (1-\delta,\delta)^T\in \Delta$, and $s\geq 0$, $t \in \mathbb{R}_{> 0}$,  we have that 
\[A(t\vec u - s\vec w) =(\vec a_1\mid \vec a_2)\left(\begin{array}{c} t-s -(t\gamma - s\delta) \\ t\gamma-s\delta\end{array}\right) = (t-s)\vec a_1+ (t\gamma-s\delta)(\vec a_2-\vec a_1).\]
Now, for a fixed $i$, $\vec u$, and $\vec w$, the containment on the right hand side of \eqref{eq:general matrix intequality} can be rewritten as
\begin{align}
	&C^T(\vec v -\vec b_i+A(t\vec u -s\vec w))\geq \vec 0 \nonumber \\
	\iff & C^TA(t\vec u -s\vec w)) \geq C^T\vec b_i - C^t\vec v \nonumber \\
	\iff &\forall j,\; \langle \vec c_j, (t-s)\vec a_1 + (t\gamma-s\delta)(\vec a_2-\vec a_1)\rangle \geq \langle \vec c_j,\vec b_i\rangle - \langle \vec c_j,\vec v\rangle\nonumber \\
	\iff &\forall j,\; (t\gamma-s\delta)\langle \vec c_j, \vec a_2-\vec a_1\rangle \geq \langle \vec c_j,\vec b_i\rangle - \langle \vec c_j,\vec v\rangle -(t-s)\langle\vec c_j,\vec a_1\rangle.\label{eq:simplified matrix inequality}
\end{align}
Now for all $j$ with $\langle \vec c_j,\vec a_2-\vec a_1\rangle >0$, \eqref{eq:simplified matrix inequality} becomes
\begin{equation}
	t\gamma-s\delta \geq \frac{ \langle \vec c_j,\vec b_i\rangle - \langle \vec c_j,\vec v\rangle -(t-s)\langle\vec c_j,\vec a_1\rangle}{\langle \vec c_j, \vec a_2-\vec a_1\rangle}.
\end{equation}
For each $i$, let 
\[m_i(\vec v)=\max\left\{\frac{ \langle \vec c_j,\vec b_i\rangle - \langle \vec c_j,\vec v\rangle -(t-s)\langle\vec c_j,\vec a_1\rangle}{\langle \vec c_j, \vec a_2-\vec a_1\rangle}\mid j:\langle \vec c_j,\vec a_2-\vec a_1\rangle >0\right\}. \]

For all $j$ with $\langle \vec c_j,\vec a_2-\vec a_1\rangle <0$, \eqref{eq:simplified matrix inequality} becomes
\begin{equation}
	t\gamma-s\delta \leq \frac{ \langle \vec c_j,\vec b_i\rangle - \langle \vec c_j,\vec v\rangle -(t-s)\langle\vec c_j,\vec a_1\rangle}{\langle \vec c_j, \vec a_2-\vec a_1\rangle}.
\end{equation}
For each $i$, let 
\[M_i(\vec v)=\min\left\{\frac{ \langle \vec c_j,\vec b_i\rangle - \langle \vec c_j,\vec v\rangle -(t-s)\langle\vec c_j,\vec a_1\rangle}{\langle \vec c_j, \vec a_2-\vec a_1\rangle}\mid j:\langle \vec c_j,\vec a_2-\vec a_1\rangle <0\right\}. \]

For any $j$ with $\langle \vec c_j,\vec a_2-\vec a_1\rangle =0$, \eqref{eq:simplified matrix inequality} is
\begin{equation}
	\langle \vec c_j,\vec b_i\rangle - \langle \vec c_j,\vec v\rangle -(t-s)\langle\vec c_j,\vec a_1\rangle\leq 0 \quad \iff \quad \langle \vec c_j,\vec v\rangle \geq \langle \vec c_j,\vec b_i-(t-s)\vec a_1\rangle.\label{eq:zero inequality}
\end{equation}
Inequality \eqref{eq:zero inequality} does not depend on $\gamma$ or $\delta$, and so is a necessary condition on any $\vec v$ with $\vec v + tA\vec u\in \vec b_i+sH+\sigma^\vee$.  We will say that  \emph{$\vec v$ and $\vec b_i$ satisfy \eqref{eq:zero inequality}}  if for all $j$ with $\langle \vec c_j,\vec a_2-\vec a_1\rangle =0$, we have $\langle \vec c_j,\vec v\rangle \geq \langle \vec c_j,\vec b_i-(t-s)\vec a_1\rangle$.

Thus, we have that for a particular $i$, $\vec u = (1-\gamma,\gamma)^T$, and $\vec w = (1-\delta, \delta)^T$, that $\vec v + tA\vec u\in \vec b_i +sA\vec w+\sigma^\vee$ if and only if $m_i(\vec v) \leq t\gamma - s\delta\leq M_i(\vec v)$ and $\vec v$  satisfies \eqref{eq:zero inequality}.

Therefore, so long as $\vec v$ and $\vec b_i$ satisfy \eqref{eq:zero inequality}, for a fixed $\vec u = (1-\gamma,\gamma)^T\in\Delta$, $\vec v + tA\vec u\in \vec b_i+sH+\sigma^\vee$ if and only if there exists $\delta\in [0,1]$ such that $m_i(\vec v)\leq t\gamma-s\delta\leq M_i(\vec v)$.  So long as $m_i(\vec v)\leq M_i(\vec v)$, it will be possible to find a $\delta$ making this inequality true if and only if $m_i(\vec v)\leq t\gamma\leq M_i(\vec v)+s$.  For each $i$, let
\[U_i(\vec v) 
=\left\lbrace 
	\begin{array}{cl}
	\left[\dfrac{m_i(\vec v)}{t},\dfrac{M_i(\vec v)+s}{t}\right] &\text{ if  $\vec v$ and $\vec b_i$ satisfy \eqref{eq:zero inequality} and } m_i(\vec v)\leq M_i(\vec v) \\
		\varnothing & \text{ otherwise}
	\end{array}
	\right.\]
Then each $U_i$ contains the values of $\gamma$ for which $\vec v + tA\vec u\in \vec b_i+sH+\sigma^\vee$.  Therefore,
\begin{equation}\label{2-generated m functions}
	\vec v + tA\vec u\in \mathcal{B}+sH+\sigma^\vee \iff [0,1]\subseteq \bigcup_i U_i(\vec v).
\end{equation}
	
	We are now ready to provide details of the computation of \Cref{MegaExample}. Recall that $R=k[x,xy,x^2y^3] = k[S]$, where $S=\sigma^\vee\cap \Z^2$ and $\sigma^\vee$ is the cone generated by $(1,0)$ and $(2,3)$, which is dual to the cone $\sigma$ generated by $\vec c_1 = (0,1)$ and $\vec c_2 = (3,-2)$. Let $I=(x^4, x^2y^3)$ and $\mfa = (x^2,x^2y^3)$, so that $\vec b_1=(4,0)$, $\vec b_2=(2,3)$, $\vec a_1 = (2,0)$, and $\vec a_2= (2,3)$.

	We will use \Cref{2-generated m functions} and \Cref{corollary - normal semigroup} to compute the adjusted tight and Frobenius closures.  We have that $\vec a_2-\vec a_1 = (0,3)$, and so $\langle \vec c_1,\vec a_2-\vec a_1\rangle  = 3$ and $\langle\vec c_2,\vec a_2-\vec a_1\rangle = -6$.  So, for all $i$ and all vectors $\vec v$, $\vec v$ and $\vec b_i$ satisfy \eqref{eq:zero inequality}.

	For any $\vec v = (v_1,v_2)\in S$, we have that
	\[m_i(\vec v) = \frac{ \langle \vec c_1,\vec b_i\rangle - \langle \vec c_1,\vec v\rangle -(t-s)\langle\vec c_1,\vec a_1\rangle}{\langle \vec c_1, \vec a_2-\vec a_1\rangle}
	= \frac{\langle \vec c_1,\vec b_i\rangle - v_2}{3} 
	= \begin{cases}
		-\frac{v_2}{3}&\text{if }i=1\\ \frac{3-v_2}{3}&\text{if }i=2
	\end{cases} = \frac{3i - 3 - v_2}{3}
	\]
	and
	\[M_i(\vec v) =\frac{ \langle \vec c_2,\vec b_i\rangle - \langle \vec c_2,\vec v\rangle -(t-s)\langle\vec c_2,\vec a_1\rangle}{\langle \vec c_2, \vec a_2-\vec a_1\rangle}
	=\frac{ \langle \vec c_2,\vec b_i\rangle - (3v_1-2v_2)-6(t-s)}{-6}
	\]
	\[=
	\begin{cases}
		-\frac{1}{6}(12-3v_1+2v_2-6(t-s)) &\text{if }i=1\\
		-\frac{1}{6}(-3v_1+2v_2-6(t-s)) &\text{if }i=2
	\end{cases} = \frac{12i - 24 + 3v_1-2v_2}{6} + t-s. \]
	Thus, for $i=1,2$, we have
	\[U_i = \left\lbrace 
	\begin{array}{cl}
	\left[\dfrac{3i-3-v_2}{3t},\dfrac{12i-24 +3v_1-2v_2}{6t} + 1\right] &\text{ if }m_i(\vec v)\leq M_i(\vec v),\text{ i.e., } v_1\geq 6-2i+2(s-t) \\
		\varnothing & \text{ otherwise}
	\end{array}
	\right.
	\]
	By \Cref{2-generated m functions} we have that $\vec v + tH\subset \mathcal{B}+sH+\sigma^\vee$ if and only if $[0,1]\subset U_1\cup U_2$. 
	Suppose that $[0,1]\subset U_1$.  This implies that
	\[\frac{-v_2}{3t} \leq 0 \text{ and }\frac{-12 +3v_1-2v_2}{6t} + 1 \geq 1 \iff v_2\geq 0 \text{ and } 3v_1-2v_2\geq 12. \]
	
	In particular, this means that $\langle \vec c_1, \vec v - \vec b_1\rangle = v_2 \geq 0$ and $\langle \vec c_2,\vec v-\vec b_1\rangle = 3v_1-2v_2 - 12 \geq 0$.  Therefore $\vec v-\vec b_1\in \sigma^\vee$, implying that $\vec v\in \vec b_1+\sigma^\vee$ and so $x^{\vec v}\in I$.
	
	A similar computation shows that if $[0,1]\subset U_2$, then 
	 $\vec v\in \vec b_2+\sigma^\vee$ and so $x^{\vec v}\in I$.

	Therefore, the only situation in which $
	\vec v + tH\subset \mathcal{B}+sH+\sigma^\vee$ but $x^{\vec v}\notin I$ is when $[0,1]$ is contained in the union of $U_1$ and $U_2$ but is contained in neither $U_1$ and $U_2$.  For this situation to occur we must have that
	\begin{align*}
		v_1&\geq 4-2(t-s) & \text{ so that }U_i\neq \varnothing\text{ for }i=1,2\\
		\frac{m_1(\vec v)}{t} = \frac{-v_2}{3t} &\leq 0 &\text{ so that }0\in U_1\\
		 \frac{-12+3v_1-2v_2}{6t}+1  = \frac{M_1(\vec v) + s}{t}&\geq   \frac{m_2(\vec v)}{t} =\frac{3-v_2}{3t}& \text{ so that }U_1\cap U_2\neq\varnothing\\
		\frac{M_2(\vec v)+s}{t} = \frac{3v_1-2v_2}{6t}+1 &\geq 1 & \text{ so that } 1\in U_2.
	\end{align*}
	This system simplifies to
	\begin{align}
		\label{eq:subeq1}v_1&\geq 4-2(t-s),\\
		\label{eq:subeq2}v_2 & \geq 0, \\
		\label{eq:subeq3}v_1&\geq 6-2t, \text{ and }\\
		\label{eq:subeq4}3v_1-2v_2&\geq 0.
	\end{align}
	Inequalities \eqref{eq:subeq2} and \eqref{eq:subeq4} are satisfied for all $\vec v\in S$.  Thus, we have that for $\vec v\in S$, 
	\begin{equation}\label{eq:convex containment condition}
		\vec v + tH\subset \mathcal{B}+sH+\sigma^\vee \iff x^{\vec v}\in I \text{ or }v_1\geq \max\{4-2(t-s), 6-2t\}.
	\end{equation}

	\subsection{The Hara-Yoshida tight and Frobenius closures}  By \Cref{corollary - normal semigroup}, $x^{\vec v}\in \istarat = \ifat$ if and only if $
	\vec v + tH\subset \mathcal{B}+\sigma^\vee$. Taking $s=0$ in \eqref{eq:convex containment condition}, we see that this is satisfied for any $\vec v$ with $x^{\vec v}\in I$ or $v_1\geq \max\{4-2(t-0), 6-2t\} = 6-2t$.  So, we have that $\istarat = I+(x^{\vec v}\mid \vec v\in S, v_1\geq 6-2t)$, which gives \eqref{meg:HY} in \Cref{MegaExample}.

	\subsection{The Vraciu tight closure} By \Cref{corollary - normal semigroup}, $x^{\vec v}\in \atistar$ if and only if  $\vec v+tH\subset \mathcal{B}+tH+\sigma^\vee$.  Taking $s=t$ in \eqref{eq:convex containment condition}, this is satisfied for any $\vec v$ with $x^{\vec v}\in I$ or $v_1\geq \max\{4-2(t-t),6-2t\} = \max\{4,6-2t\}$.  
	This proves \eqref{meg:Vtight} in \Cref{MegaExample}.

		\subsection{The Vraciu Frobenius closure} By \Cref{corollary - normal semigroup}, a sufficient condition for $x^{\vec v}$ to be in $\atif$ is for $\vec v+tH\subset \mathcal{B}+sH+\sigma^\vee$ for some $s>t$.  Letting $s=t+\epsilon$ for some small $\epsilon>0$ in \eqref{eq:convex containment condition}, we have that this condition is satisfied for any $\vec v$ with $x^{\vec v}\in I$ or $v_1\geq \max\{4+2\epsilon, 6-2t\}$.  Thus, we have that if $t \geq 1/2$, then $\atif \supseteq I+(x^5y^2)$.
		
		Now, if $t<1/2$, then $I\subset \atif\subset\atistar = I$, so $\atif = I$.  If $1/2 \leq t < 1$, then $I+(x^5y^2)\subset \atif \subset \atistar = I+(x^5y^2)$, and so $\atif = I+(x^5y^2)$.  If $t\geq 1$, then $I+(x^5y^2)\subset \atif \subset \atistar = I+(x^4y,x^4y^2)$.

		To determine $\atif$ for $t\geq 1$, we use the definition to check whether the monomial $x^4y^r$ belongs to $\atif$ for $r\in \{1,2\}$. 

		If $x^4y^r\in \atif$, then since $x^2\in \mfa$, there exists $e$ such that 
		\begin{equation}\label{in closure => p=3 eq 1}
			x^{4p^e}y^{rp^e}x^{2\lceil t p^e\rceil}\in (x^4y)^{p^e}\mfa^{\lceil tp^e\rceil } \subset (x^4,x^2y^3)^{[p^e]}(x^2,x^2y^3)^{\lceil tp^e\rceil}.
		\end{equation}
		Since comparing degrees of $y$ shows that $x^{4p^e}y^{rp^e}x^{2\lceil t p^e\rceil}\notin (x^2y^3)^{[p^e]}$, if \eqref{in closure => p=3 eq 1} holds then there exists $0\leq j\leq \lceil tp^e\rceil$ such that 
		\[x^{4p^e + 2\lceil tp^e\rceil}y^{rp^e} =  x^{4p^e}y^{rp^e}x^{2\lceil t p^e\rceil}\in x^{4p^e}(x^{2\lceil tp^e\rceil}y^{3j}) = (x^{4p^e + 2\lceil tp^e\rceil}y^{3j}). \]
		Therefore $y^{rp^e - 3j}\in R$.  Since the only power of $y$ in $R$ is $1$, $rp^e = 3j$, and so $p=3$.

		On the other hand, suppose $p=3$ and let $e\geq 1$.  The ideal $(x^4y^r)^{3^e}\mfa^{\lceil 3^et\rceil}$ is generated by the monomials $(x^4y^r)^{3^e}(x^{\lceil 3^et\rceil} y^{3i}) = x^{4\cdot 3^e + \lceil 3^e t\rceil}y^{3^er+3i}$ for $0\leq i\leq \lceil 3^et\rceil$.

		If $i \leq 3^e - 3^{e-1}r$, then let $j=3^{e-1}r + i\leq 3^e\leq \lceil 3^et\rceil$, and note that
		\[x^{4\cdot 3^e + 2\lceil 3^e t\rceil}y^{3^er+3i}
		=(x^4)^{3^e}x^{2\lceil 3^e t\rceil}y^{3^er+3(j-3^{e-1}r)}
		= (x^4)^{3^e}x^{2\lceil 3^e t\rceil}y^{3j}
		\in (x^4,x^2y^3)^{[3^e]}(x^2,x^2y^3)^{\lceil 3^e t\rceil}.
		\]
		If $i \geq 3^e - 3^{e-1}r$, then let $j = i - 3^e + 3^{e-1}r\geq 0$, and note that
		\[x^{4\cdot 3^e + 2\lceil 3^e t\rceil}y^{3^er+3i}
		=(x^2y^3)^{3^e}x^{2\cdot 3^e+2\lceil 3^e t\rceil}y^{3^er + 3i - 3^{e+1}}
		=(x^2y^3)^{3^e}x^{2\cdot 3^e+2\lceil 3^e t\rceil}y^{3^er + 3(j + 3^e - 3^{e-1}r) - 3^{e+1}}\]\[
		=(x^2y^3)^{3^e}x^{2\cdot 3^e}x^{2\lceil 3^e t\rceil}y^{3j}
		\in (x^4,x^2y^3)^{[3^e]}(x^2,x^2y^3)^{\lceil 3^e t\rceil}.
		\]
		Therefore $x^4y^r\in \atif$ in this case.

		So, we have found that $\atif$ for $t\geq 1$ depends on $p$.  Namely, $\atif$ contains $x^4y$ and $x^4y^2$ if and only if $t\geq 1$ and $p =3$. Hence membership in $\atif$ cannot be detected simply using the convex geometry of the affine semigroup.  This shows \eqref{meg:VFrob} in \Cref{MegaExample}.
\end{appendices}

\bibliographystyle{alpha}
\bibliography{ref.bib}

\end{document}